\numberwithin{equation}{section}
\def\d{\delta}
\def\b{\beta}
\def\p{\partial}
\def\ep{\epsilon}
\def\O{\Omega}
\def\Z{\mathbb{Z}}
\def\D{\mathcal{D}}
\def\V{\mathcal{V}}
\def\H{\mathcal{H}}
\def\L{\mathcal{L}}
\def\Z{\mathbb{Z}}
\def\b{{\beta}}
\def\d{\delta}
\def\ep{\epsilon}
\def\eps{\epsilon}
\def\varep{\varepsilon}
\def\phii{\widetilde{\varphi}}
\def\al{\alpha}
\newtheorem{theorem}{Theorem}[section]
\newtheorem{lemma}[theorem]{Lemma}
\newtheorem{proposition}[theorem]{Proposition}
\newtheorem{definition}[theorem]{Definition}
\newtheorem{remark}[theorem]{Remark}
\def\beq{\begin{equation}}
\def\eeq{\end{equation}}
\def\beq{\begin{equation}}
\def\eeq{\end{equation}}
\begin{document}

\title[The one-fluid Euler-Maxwell system in 3D with vorticity]{Long term regularity of the one-fluid Euler-Maxwell system in 3D with vorticity}

\author{Alexandru D. Ionescu}
\address{Princeton University}
\email{aionescu@math.princeton.edu}

\author{Victor Lie}
\address{Purdue University}
\email{vlie@purdue.edu}

\thanks{The first author was supported in part by NSF grant DMS-1265818. The second author was supported in part by NSF grant DMS-1500958.}

\begin{abstract}
A basic model for describing plasma dynamics is given by the ``one-fluid'' Euler-Maxwell system, in which a compressible electron fluid interacts with its own self-consistent electromagnetic field. In this paper we prove long-term regularity of solutions of this system in 3 spatial dimensions, in the case of small initial data with nontrivial vorticity. 

Our main conclusion is that the time of existence of solutions depends only on the size of the vorticity of the initial data, as long as the initial data is sufficiently close to a constant stationary solution. 
\end{abstract}

\maketitle

\setcounter{tocdepth}{1}

\tableofcontents

\date{\today}

\maketitle

\section{Introduction}

A plasma is a collection of fast-moving charged particles and is one of the four fundamental states of matter. Plasmas are the most common phase of ordinary matter in the universe, both by mass and by volume. Essentially, all of the visible light from space comes from stars, which are plasmas with a temperature such that they radiate strongly at visible wavelengths. Most of the ordinary (or baryonic) matter in the universe, however, is found in the intergalactic medium, which is also a plasma, but much hotter, so that it radiates primarily as X-rays. We refer to \cite{Bit,DelBer} for physics references in book form.

One of the basic models for describing plasma dynamics is the Euler-Maxwell  \textquotedblleft two-fluid\textquotedblright\ model, in which two compressible ion and electron fluids interact with their own self-consistent electromagnetic field. In this paper we consider a slightly simplified version, the so-called one-fluid Euler-Maxwell system (EM) for electrons, which accounts for the interaction of electrons and the electromagnetic field, but neglects the dynamics of the ion fluid. The model describes the dynamical evolution of the functions $n_e:\mathbb{R}^3\to\mathbb{R}$ (the density of the fluid), $v_e:\mathbb{R}^3\to\mathbb{R}^3$ (the velocity field of the fluid), and $E',B':\mathbb{R}^3\to\mathbb{R}^3$ (the electric and magnetic fields), which evolve according to the coupled nonlinear system
\begin{equation}\label{initsyst}
\begin{cases}
&\partial_tn_e+\hbox{div}(n_ev_e)=0,\\
&m_e(\partial_tv_e+v_e\cdot\nabla v_e)=-P_e\nabla n_e-e\left[E'+(v_e/c)\times B'\right],\\
&\partial_tE'-c\nabla\times B'=4\pi en_ev_e,\\
&\partial_tB'+c\nabla\times E'=0,\\
\end{cases}
\end{equation}
together with the constrains
\begin{equation}\label{constr}
\hbox{div}(B')=0,\quad \hbox{div}(E')=-4\pi e(n_e-n^0).
\end{equation}
The constraints \eqref{constr} are propagated by the flow if they are satisfied at the initial time.

There are several physical constants in the above system: $-e<0$ is the electron's charge, $m_e$ is the electron's mass, $c$ denotes the speed of light, and $P_e$ is related to the effective electron temperature (that is $k_BT_e=n^0P_e$, where $k_B$ is the Boltzmann constant). In the system above we have chosen, for simplicity, the quadratic adiabatic pressure law $p_e=P_en_e^2/2$.

The system has a family of equilibrium solutions $(n_e,v_e,E',B')=(n^0,0,0,0)$, where $n^0>0$ is a constant. Our goal here is to investigate the long-term stability properties of these solutions.

\subsection{The main theorem} The system \eqref{initsyst}--\eqref{constr} is a complicated coupled nonlinear system of ten scalar evolution equations and two constraints. To simplify it, we make first linear changes of variables to normalize the constants. More precisely, let
\begin{equation*}
\lambda:=\frac{1}{c}\sqrt{\frac{4\pi e^2n^0}{m_e}},\qquad\beta:=\sqrt{\frac{4\pi e^2n^0}{m_e}},\qquad \alpha:=\frac{\lambda m_e c^2}{e}=\frac{4\pi en^0}{\lambda},\qquad d:=\frac{P_en^0}{m_ec^2}>0,
\end{equation*}
and define the functions $n,v,E,B$ by
\begin{equation*}
\begin{split}
&n_e(x,t)=n^0[1+n(\lambda x,\beta t)],\qquad v_e(x,t)=c\cdot v(\lambda x,\beta t),\\
&E'(x,t)=\alpha E(\lambda x,\beta t),\qquad\,\qquad\, B'(x,t)=\alpha B(\lambda x,\beta t).
\end{split}
\end{equation*}
The system \eqref{initsyst}--\eqref{constr} becomes

\beq \label{systI}
\begin{cases}
\partial_tn+\hbox{div}((1+n)v)&=0,\\
\partial_tv+v\cdot\nabla v+d\nabla n+E+v\times B&=0,\\
\partial_tE-\nabla\times B-(1+n)v&=0,\\
\partial_tB+\nabla\times E&=0,
\end{cases}
 \eeq
and
\beq \label{constr1}
\hbox{div}(B)=0,\qquad \hbox{div}(E)+n=0.
\eeq
The system depends only on the parameter $d$ in the second equation. In the physically relevant case we have $d\in(0,1)$, which we assume from now on.

We now define the \textit{vorticity} of our system (allowed to be nontrivial) as
\begin{equation}\label{vort}
Y:=B-\nabla\times v.
\eeq
We note that the system \eqref{systI} admits a conserved energy, defined by
\begin{equation}\label{EnCons}
\mathcal{E}_{conserved}:=\int_{\mathbb{R}^3}\big\{d|n|^2+(1+n)|v|^2+|E|^2+|B|^2\big\}\,dx.
\end{equation}

To state our main theorem we need to introduce some notation.

\begin{definition}\label{OneDef}   We define the rotational vector-fields,
\beq\label{difop}
\O_1:=x_2\p_3-x_3\p_2,\qquad\O_2:=x_3\p_1-x_1\p_3,\qquad\O_3:=x_1\p_2-x_2\p_1.
\eeq
For $m\geq 0$ let $\mathcal{V}_m$ denote the set of differential operators of the form
\beq\label{coordrotm}
\mathcal{V}_m:=\{\partial_1^{\alpha_1}\partial_2^{\alpha_2}\partial_3^{\alpha_3}\O_1^{\beta_1}\O_2^{\beta_2}\O_3^{\beta_3}:\alpha_1+\alpha_2+\alpha_3+\beta_1+\beta_2+\beta_3\leq m\}.
\eeq
For $N\geq 1$ and $p\in[1,\infty]$ we define the spaces $\H^{N}(\mathbb{R}^3)$ and $\mathcal{W}^{N,p}(\mathbb{R}^3)$ by the norms
\begin{equation}\label{alx1}
\|u\|_{\H^{N}(\mathbb{R}^3)}:=\sum_{\mathcal{L}\in \mathcal{V}_N}\|\L u\|_{L^2(\mathbb{R}^3)},\qquad \|u\|_{\mathcal{W}^{N,p}(\mathbb{R}^3)}:=\sum_{\mathcal{L}\in \mathcal{V}_N}\|\L u\|_{L^p(\mathbb{R}^3)}.
\end{equation}
For $N\geq 1$ as above, we let $\widetilde{\H}^N$ be the normed space
\begin{equation}\label{alx2}
\begin{split}
\widetilde{\H}^N:=\{&(n,v,E,B):\mathbb{R}^3\to\mathbb{R}\times\mathbb{R}^3\times\mathbb{R}^3\times\mathbb{R}^3:\\
&\|(n,v,E,B)\|_{\widetilde{\H}^N}:=\|n\|_{\H^N}+\|v\|_{\H^N}+\|E\|_{\H^N}+\|B\|_{\H^N}<\infty\}.
\end{split}
\end{equation}
\end{definition}

The following theorem in the main result of this paper:

\begin{theorem}\label{MainThm}
Assume $d\in(0,1)$, and let $N_0:=100$, $N_1:=N_0/2+2$, and $\beta:=10^{-6}$. Then there is a constant $\bar{\ep}=\bar{\ep}(d)>0$ with the following property: assume that $(n_0,v_0,E_0,B_0):\mathbb{R}^3\to\mathbb{R}\times\mathbb{R}^3\times\mathbb{R}^3\times\mathbb{R}^3$ are small, smooth, and localized initial data, i.e.
\begin{equation}\label{smallin}
\|(n_0,v_0,E_0,B_0)\|_{\widetilde{\H}^{N_0}}+\|(1+|x|^2)^{(1+\beta)/2}(1-\Delta)^3(n_0,v_0,E_0,B_0)\|_{\H^{N_1}}\leq \bar{\ep},
\end{equation}
satisfying the compatibility conditions
\beq \label{compatin}
\hbox{div}(B_0)=0,\qquad \hbox{div}(E_0)+n_0=0.
\eeq
Assume that the initial vorticity $Y_{0}=B_0-\nabla\times v_{0}$ satisfies the additional smallness condition
\begin{equation}\label{smallvort}
\|(1+|x|^2)^{1/4}Y_0\|_{\mathcal{H}^{N_1}}\leq \d_{0}\leq\bar{\ep}.
\end{equation}
Then there exists a unique solution $(n,v,E,B)\in C([0,T_{\delta_0}]: \widetilde{\H}^{N_0})$ of the system \eqref{systI}--\eqref{constr1} having the initial data $(n_0,v_0,E_0,B_0)$, where
\begin{equation}\label{Alx4}
T_{\delta_0}=\bar{\ep}/\d_0.
\end{equation}
\end{theorem}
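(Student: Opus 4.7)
\emph{Step 1: vorticity equation.} A direct computation using $\partial_t B=-\nabla\times E$, the $v$-equation of \eqref{systI}, and the identity $\nabla\times(v\times B)=(B\cdot\nabla)v-(v\cdot\nabla)B-B(\nabla\cdot v)$ yields the closed evolution
\begin{equation*}
\partial_t Y+(v\cdot\nabla)Y=(Y\cdot\nabla)v-Y(\nabla\cdot v).
\end{equation*}
The right-hand side is linear and homogeneous in $Y$, so $Y$ is merely transported along $v$ with amplification governed by $\nabla v$. Splitting $v=v_{\mathrm{disp}}+v_{\mathrm{rot}}$, the dispersive piece contributes a Gronwall factor $\exp\bigl(C\int_0^t\|\nabla v_{\mathrm{disp}}\|_{L^\infty}\,ds\bigr)$ that is uniformly bounded thanks to the $t^{-3/2}$ Klein-Gordon decay, while the rotational piece (determined by $Y$ itself via $\nabla\times v=B-Y$) contributes $\exp(C\delta_0 t)$. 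For $t\le T_{\delta_0}=\bar\ep/\delta_0$ this product stays $\lesssim 1$, giving the uniform bound $\|(1+|x|^2)^{1/4}Y(t)\|_{\mathcal{H}^{N_1}}\lesssim\delta_0$.

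\emph{Step 2: linear diagonalization and bootstrap setup.} Linearizing \eqref{systI} around the equilibrium, using the Helmholtz decomposition of $v$ and $E$, and eliminating $E_\parallel$ via the constraint $\nabla\cdot E=-n$, decouples the linear flow into two Klein-Gordon propagators with symbols $\Lambda_1(\xi)=\sqrt{1+d|\xi|^2}$ for the longitudinal $(n,v_\parallel)$ mode and $\Lambda_2(\xi)=\sqrt{1+|\xi|^2}$ for the transverse $(v_\perp,E_\perp,B)$ mode. Decomposing the solution as $U_{\mathrm{disp}}+U_{\mathrm{rot}}$, where $U_{\mathrm{rot}}$ carries the rotational non-dispersive piece of $v$, the plan is to run a continuity argument on $[0,T_{\delta_0}]$ for three norms: (i) the high-order energy $\|(n,v,E,B)\|_{\widetilde{\mathcal{H}}^{N_0}}\lesssim\bar\ep$; (ii) a weighted $Z$-type norm for the Klein-Gordon profiles at regularity $N_1$, encoding the $(1-\Delta)^3(1+|x|^2)^{(1+\beta)/2}$ localization from \eqref{smallin} and also of size $\bar\ep$; and (iii) the vorticity bound $\|(1+|x|^2)^{1/4}Y\|_{\mathcal{H}^{N_1}}\lesssim\delta_0$ obtained in Step~1.

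\emph{Step 3: dispersive and energy estimates.} The substantive analytical work is the $Z$-norm bound for $U_{\mathrm{disp}}$, in the spirit of Guo-Ionescu-Pausader for Euler-Maxwell and Deng-Ionescu-Pusateri for coupled Klein-Gordon systems. The recipe is to (a) pass to profiles $e^{it\Lambda_j}U_{\mathrm{disp}}$; (b) identify the space-time resonant sets for the bilinear $\Lambda_1$-$\Lambda_2$ interactions (two distinct speeds producing a richer resonant structure than in the irrotational case); and (c) perform normal-form transformations away from resonances while using oscillatory-integral estimates and $L^2$ integration-by-parts at resonances, leveraging the angular decomposition provided by the rotation vector fields $\Omega_i$ and the $x$-weight. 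In parallel, the energy estimate for $\L\in\mathcal{V}_{N_0}$ applied to \eqref{systI} closes via the symmetric structure of the quadratic nonlinearity (which turns top-order contributions into perfect derivatives), together with commutator bounds against $\Omega_i$.

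\emph{Step 4: main obstacle.} The hardest point is the feedback from the non-dispersive rotational sector into the Klein-Gordon modes. Quadratic terms such as $v_{\mathrm{rot}}\cdot\nabla n$, $v_{\mathrm{rot}}\times B$, or the contribution of $v_{\mathrm{rot}}$ to the current $(1+n)v$ in the Amp\`ere equation are only linear in a truly dispersive quantity, and feed the $Z$-norm at a non-integrable rate. They must be isolated by a paradifferential decomposition and absorbed using the uniform $\delta_0$-bound on $Y$: their cumulative contribution to the $Z$-norm over $[0,T_{\delta_0}]$ is of order $\bar\ep\cdot\delta_0\cdot T_{\delta_0}=\bar\ep^{\,2}$, consistent with the bootstrap. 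Propagating the extra $(1-\Delta)^3(1+|x|^2)^{(1+\beta)/2}$ localization across the resonances of the multi-speed Klein-Gordon system over the full interval $[0,\bar\ep/\delta_0]$ is the most delicate part of the space-time resonance analysis, and is precisely where the small parameter $\beta=10^{-6}$ in the weight and the bound $N_1=N_0/2+2$ will be used.
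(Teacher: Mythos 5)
Your outline correctly reproduces the paper's architecture: diagonalize the linearized system into two Klein--Gordon modes (with symbols $\sqrt{1+d|\xi|^2}$ and $\sqrt{1+|\xi|^2}$) plus the transported vorticity $Y$, and close a continuity argument on exactly the three quantities you list --- high-order Sobolev energy $\sim\bar\ep$, a dispersive $Z$-norm $\sim\bar\ep$, and the weighted $\mathcal{H}^{N_1}$ bound $\sim\delta_0$ for $Y$ --- with the lifespan set by $\bar\ep\cdot\delta_0\cdot T_{\delta_0}\approx\bar\ep^2$. Your Step~1 routes the vorticity bound through Gronwall on the transport form $\partial_tY+(v\cdot\nabla)Y=(Y\cdot\nabla)v-Y(\nabla\cdot v)$, whereas the paper differentiates the weighted energy $\int(1+|x|^2)^{1/2}|\L Y|^2$ and integrates the increment $\lesssim\delta_0^3+\bar\ep\langle t\rangle^{-1-\beta}\delta_0^2$ directly, but after commuting with $\mathcal{V}_{N_1}$ and the weight these are the same estimate and both close for $t\le\bar\ep/\delta_0$; that part is fine, modulo the fact that the rotational velocity is determined by $Y$ through the explicit formula $v_{\mathrm{rot}}=-R\times\Lambda_b^{-2}|\nabla|Y$, not simply by $\nabla\times v=B-Y$.

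Where the proposal stops short of being a proof is Steps~3--4. Two points. First, a small conceptual misstatement: the dispersive--dispersive resonant set for the two-speed phases $\Lambda_\sigma(\xi)-\Lambda_\mu(\xi-\eta)-\Lambda_\nu(\eta)$ is the \emph{same} as in the irrotational Euler--Maxwell literature (spheres $|\xi|=\gamma_1,\gamma_2$); the new interactions created by nontrivial vorticity are $\mathrm{transport}\times\mathrm{dispersive}$ and $\mathrm{transport}\times\mathrm{transport}$, for which the ``phase'' has $\Lambda_\mu=0$ and is bounded away from zero, so the difficulty there is entirely about absorbing the non-decaying factor, not about resonances. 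Second, and more seriously, the heuristic ``cumulative contribution $\bar\ep\cdot\delta_0\cdot T_{\delta_0}=\bar\ep^2$'' is not what one has to beat in the $Z$-norm: the norm carries a $2^{(1+\beta)j}$ physical-space weight and, in the $b$-mode, weights $2^{-4\beta n}$ in the distance $\Psi^\dagger_b(\xi)$ to the resonant spheres. Near those spheres the naive bilinear bound grows like $2^{(1+\beta)j}2^m$ and one must gain back the deficit through modulation localization, integration by parts in time (your ``normal forms''), angular integration by parts with the $\Omega_i$, and the $\Psi^\dagger$-adapted weight built into the norm itself --- this last ingredient is what permits the pointwise estimate $|(1+2^m\Psi^\dagger_\sigma(\xi))^{1/2+10\beta}\widehat I(\xi)|\lesssim 2^{l}2^{2\beta m-m/2}$ to close. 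Your plan invokes this machinery by reference but does not supply the norm or the resonance estimates, so as written it leaves the two hardest sections of the argument (the $B_j^\sigma$ bootstrap and the resonant case in Proposition~\ref{mo10}) unaddressed.
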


\begin{remark}\label{Alx0}

(i) The main conclusion of the theorem is that the solutions extend and stay smooth at least up to time $T_{\delta_0}\gtrsim 1/\delta_0$, which depends only on the size $\delta_0$ of the vorticity of the initial data. Notice that this implies global regularity in the irrotational case $\delta_0=0$, thus providing a quantitative version of the  earlier theorems of \cite{GeMa} and \cite{IoPa2}.

(ii) One can derive more information about the solution $(n,v,E,B)$ of the system. For example, the solution satisfies the uniform bounds, for all $t\in[0,T_{\delta_0}]$,
\begin{equation*}
\|(n(t),v(t),E(t),B(t))\|_{\widetilde{\H}^{N_0}}\lesssim\overline{\ep},\qquad \|(1+|x|^2)^{1/4}Y(t)\|_{\mathcal{H}^{N_1}}\lesssim \d_{0},
\end{equation*}
where $Y(t)=B(t)-\nabla\times v(t)$. Moreover, the solution decouples into a superposition of two dispersive components $U_e$ and $U_b$ which propagate with different group velocities and decay, and a vorticity component $Y$, which is essentially transported by the flow. The two dispersive components can be studied precisely using the $Z$-norm, see Definition \ref{MainZDef}.
\end{remark}

\subsection{Previous work on long-term regularity} The local regularity theory of the Euler--Maxwell system follows easily by energy estimates. The question of long-term regularity is much more interesting and has been studied in several recent papers. 

The dynamics of the full Euler--Maxwell system is extremely complex, due to a large number of coupled interactions and many types of resonances. Even at the linear level, there are ion-acoustic waves, Langmuir waves, light waves etc. At the nonlinear level, the Euler--Maxwell system is the origin of many well-known dispersive PDE's which can be derived via scaling and asymptotic expansions. See also the introduction of \cite{GuIoPa} for a longer discussion of the Euler--Maxwell system in 3D, and its connections to many other models in mathematical physics, such as the Euler--Poisson model, the Zakharov system, the KdV, the KP, and the NLS.

Because of this complexity it is natural to study first simplified models, such as the one-fluid Euler--Poisson model (first studied by Guo \cite{Guo}) and the one-fluid Euler--Maxwell system (which is the system \eqref{initsyst}). In particular, the one-fluid Euler--Maxwell system shares many of the features and the conceptual difficulties of the full system, but is simpler at the analytical level. Under suitable irrotationality assumptions, this system can be reduced to a coupled system of two Klein--Gordon equations with different speeds and no null structure. While global results are classical
in the case of scalar wave and Klein--Gordon equations, see for example \cite{Jo,JoKl, Kl2, KlVf,  Kl, Kl4, Ch, Sh, Si, DeFa, DeFaXu, Alin2, Alin3}, it was pointed out by Germain \cite{Ge} that there are key 
new difficulties in the case of a coupled system of Klein--Gordon equations with different speeds. In this case, the classical vector-field  method does not 
seem to work well, and there are large sets of resonances that contribute in the analysis. Global regularity for small irrotational solutions of this model was proved by Germain--Masmoudi \cite{GeMa} and Ionescu--Pausader \cite{IoPa2}, using more subtle arguments based on Fourier analysis. 

In 3 dimensions, nontrivial global solutions of the full two-fluid system were 
constructed for the first time by Guo--Ionescu--Pausader \cite{GuIoPa} (small irrotational perturbations of constant solutions), 
following the earlier partial results in simplified models in \cite{Guo,GuPa,GeMa,IoPa2}. 

The one-fluid Euler--Poisson system and the one-fluid Euler--Maxwell system have also been studied in 2 dimensions, where the global results are harder due to less dispersion and slower decay. See \cite{IoPa1}, \cite{LiWu}, and \cite{DeIoPa}.

\subsubsection{Nontrivial vorticity} We remark that all the global regularity results described above are restricted to the case of solutions with trivial vorticity. This is also the case with the global regularity results in many other quasilinear fluid models, such as water waves, see the introduction of \cite{DeIoPaPu} for a longer discussion. 

In fact, all proofs of global existence in quasilinear evolutions depend in a crucial way on establishing quantitative decay of solutions over time. On the other hand, one usually expects that vorticity is transported by the flow and does not decay. This simple fact causes a serious obstruction to proving global existence for solutions with dynamically nontrivial vorticity.  

In this paper we would like to initiate the study of long-term regularity of solutions with nontrivial vorticity. However, we are not able to establish the global existence of such solutions for any of the Euler-Maxwell or Euler--Poisson systems. Instead we prove that sufficiently small solutions extend smoothly on a time of existence that depends only on the size of the vorticity. 

Such a theorem can be interpreted as a quantitative version of global regularity theorems for small solutions with trivial vorticity described earlier. In fact, our Theorem \ref{MainThm} immediately implies the global regularity theorems of \cite{GeMa} and \cite{IoPa2}, simply by letting $\delta_0\to 0$. 

An important consideration to keep in mind is the length of the time of existence of solutions. In our case we show that this time of existence is at least $c/\delta_0$, where $\delta_0$ is the size of the vorticity component of the initial data, and $c$ is a small constant. This is consistent with the time of existence of the simple equation
\begin{equation}\label{MoVo}
\partial_{t}Y=Y^2.
\end{equation}
One can think of this equation as a model for the vorticity equation, in dimension 3, which ignores all the other interactions and the precise structure of the vorticity equation. The $c/\delta_0$ time of existence appears to be quite robust, and one can hope to prove a theorem like Theorem \ref{MainThm} in other models in which global regularity for solutions with trivial vorticity is known.  

One might also hope that more involved analysis would allow one to extend solutions beyond the $c/\delta_0$ time of existence, particularly in certain models in dimension 2 when the vorticity equation is known to behave better than the simple equation \eqref{MoVo}. We hope to return to such issues in the future. 

\subsection{Main ideas of the proof}\label{MainIdea}The classical mechanism to establish long-term regularity for quasilinear equations has two main components:

\setlength{\leftmargini}{1.8em}
\begin{itemize}
  \item[(1)] Control of high frequencies (high order Sobolev norms);
\smallskip
  \item[(2)] Dispersion/decay of the solution over time.
\end{itemize}

The interplay of these two aspects has been present since the seminal work of Klainerman \cite{Kl2}--\cite{Kl4}, Christodoulou \cite{Ch}, and Shatah \cite{Sh}. In the last few years new methods have emerged in the study of global solutions of quasilinear evolutions, inspired by the advances in semilinear theory.
The basic idea is to combine the classical energy and vector-fields methods with refined analysis of the Duhamel formula, using the Fourier transform.
This is the essence of the ``method of space-time resonances'' of Germain--Masmoudi--Shatah \cite{GeMaSh,GeMaSh2}, see also Gustafson--Nakanishi---Tsai \cite{GuNaTs},
and of the refinements in \cite{IoPa1,IoPa2,GuIoPa,GuIoPa2,DeIoPa,De,DeIoPaPu}, using atomic decompositions and sophisticated norms.

This general framework needs to be adapted to our case, where we have non-decaying components and we are aiming for a lifespan that depends only on the size of these components. To illustrate the main ideas, consider the following schematic system
\begin{equation}\label{scoo}
\begin{split}
(\partial_t+i\Lambda)U&=O(U^2)+O(UY)+O(Y^2),\\
\partial_t Y&=O(UY)+O(Y^2).
\end{split}
\end{equation}
Here one should think of $U$ as generic dispersive variables (take for instance the Klein--Gordon case $\Lambda=\sqrt{1-\Delta}$) and $Y$ represent generic non-dispersive vorticity-type components. The nonlinearities $O(U^2), O(UY), O(Y^2)$ are to be thought of as generic quadratic nonlinearities that may lose derivatives. See \eqref{KG} for the precise system in our case, keeping in mind that there are two types of dispersive variables corresponding to two different speeds of propagation. 

Our analysis of solutions of such a system contains three main ingredients:

\begin{itemize}
\item Energy estimates for the full system. These estimates allow us to control high Sobolev norms and weighted norms (corresponding to the rotation vector-field) of the solution. They are not hard in our case, since we are able to prove independently $L^1_t$ pointwise control of the solution.

\item Vorticity energy estimates. This is a new ingredient in our problem. We need to show that the vorticity stays small, that is $\lesssim\delta_0$, on the entire time of existence. These estimates depend again on the $L^1_t$ pointwise control of the solution and on the structure of the nonlinearity of the vorticity equation (without a $O(U^2)$ term).

\item Dispersive analysis. The dispersive estimates, which lead to decay, rely on a bootstrap argument in a suitable $Z$ norm. The norm we use here is similar to the $Z$ norm introduced in the 2D problem in \cite{DeIoPa} and accounts for the rotation invariance of the system. We analyze carefully the Duhamel formula for the first equation in \eqref{scoo}, in particular the quadratic interactions related to the set of resonances. The analysis of the terms $O(Y^2)$ and $O(YU)$, which contain the $\mathrm{transport}\times\mathrm{transport}\to\mathrm{dispersive}$ and the $\mathrm{transport}\times\mathrm{dispersive}\to\mathrm{dispersive}$ interactions, is new, when compared to the irrotational global results described earlier such as \cite{IoPa2}. On the other hand, the analysis of the term $O(U^2)$, which involves a large set of space-time resonances, due to the two different speeds of propagation, has similarities with the analysis in \cite{IoPa1,IoPa2,GuIoPa,GuIoPa2}. 
\end{itemize}

At the implementation level, we remark that we are able to completely decouple the decay parameter $\beta$, which can be taken very small, see Definition \ref{MainZDef}, from the smoothness parameters $N_0$ and $N_1$. These parameters were related to each other in earlier work, such as \cite{IoPa1,IoPa2, GuIoPa,GuIoPa2}. 
As a result, we are able to reduce substantially the total number of derivatives $N_0$ and $N_1$ in the main theorem.\footnote{These smoothness parameters can be further reduced by longer and more careful analysis, but our goal here is just to demonstrate that these parameters can be decoupled from the decay parameters in the $Z$ norm.}

\subsection{Organization} The rest of the paper is organized as follows: in section \ref{prelims} we introduce most of the key definitions, such as the $Z$ norm, rewrite our main system as a dispersive system for the quasilinear variables (diagonalized at the linear level), and state the main bootstrap proposition. In section \ref{lemmas} we summarize some lemmas that are being used in the rest of the paper, mostly concerning linear analysis and the resonant structure of the oscillatory phases. In section \ref{EneEst} we prove our main energy estimates, both for the full energy of the system and for the vorticity energy. Finally, in sections \ref{ParT}--\ref{DispInter} we prove our main dispersive estimates for the decaying components of the solution.

\section{Preliminaries}\label{prelims}

In this section we rewrite our main system as a quasilinear dispersive system (diagonalized at the linear level), summarize the main definitions, and state the main bootstrap proposition.

\subsection{Diagonalization} We assume that $(n,v,E,B)$ satisfy the system of equations \eqref{systI}--\eqref{constr1} and use the Hodge decomposition. Let
\begin{equation}\label{Alx11}
\begin{split}
&F:=|\nabla|^{-1}\hbox{div}(v),\qquad \,G:=|\nabla|^{-1}\nabla\times v,\\
&Z:=|\nabla|^{-1}\hbox{div}(E),\qquad W:=|\nabla|^{-1}\nabla\times E,\qquad Y=B-\nabla\times v.
\end{split}
\end{equation}
Let $R_j:=|\nabla|^{-1}\partial_j$ denote the Euclidean Riesz transforms. Then we can express the variables $n,v,E,B$ elliptically, in terms of $F,G,Z,W,Y$, according to the formulas
\begin{equation}\label{Alx12}
v_k=-R_kF+\in_{jlk}R_jG_l,\quad E_k=-R_kZ+\in_{jlk}R_jW_l,\quad n=-|\nabla|Z,\quad B=Y+|\nabla|G.
\end{equation}
Recall also that
\begin{equation*}
\hbox{div}(Y)=0,\qquad \hbox{div}(G)=0,\qquad \hbox{div}(W)=0.
\end{equation*}

By taking divergences and curls, the system \eqref{systI} gives the evolution equations
\begin{equation}\label{Alx13}
\begin{cases}
\partial_t F+(1+d|\nabla|^2)Z&=-R\cdot(v\cdot \nabla v)-R\cdot (v\times B),\\
\partial_t G+W&=-R\times(v\cdot \nabla v)-R\times (v\times B),\\
\partial_t Z-F&=R\cdot (nv),\\
\partial_t W-(1+|\nabla|^2)G-|\nabla|Y&=R\times (nv),\\
\partial_t Y&=|\nabla|\big[R\times(v\cdot \nabla v)+R\times (v\times B)\big].
\end{cases}
\end{equation}
Since $B=Y+\nabla\times v$ and $v\times(\nabla\times v)=\nabla(|v|^2/2)-v\cdot\nabla v$ we have
\begin{equation}\label{Alx14}
\begin{split}
R\cdot (v\times B)&=R\cdot (v\times Y)-|\nabla|(|v|^2)/2-R\cdot(v\cdot \nabla v),\\
R\times(v\times B)&=R\times(v\times Y)-R\times(v\cdot \nabla v).
\end{split}
\end{equation}

Let
\begin{equation}\label{Alx15}
\begin{split}
&U_e:=\Lambda_eZ+iF,\qquad\qquad\qquad\qquad\Lambda_e:=\sqrt{1+d|\nabla|^2},\\
&U_b:=W+i\Lambda_bG+i\Lambda_b^{-1}|\nabla|Y,\qquad\, \Lambda_b:=\sqrt{1+|\nabla|^2}.
\end{split}
\end{equation}
The formulas above show that
\beq \label{KG}
\begin{cases}
(\partial_t+i\Lambda_e) U_e&= \Lambda_e (R\cdot [n v])+i |\nabla| (|v|^2)/2-i R\cdot (v\times Y),\\
(\partial_t+i\Lambda_b) U_b&= R\times [nv]- i \Lambda_b^{-1} R\times (v\times Y),\\
\partial_t Y&=\nabla\times (v\times Y)\,.
\end{cases}
 \eeq

Conversely, the physical variables $n,v,E,B$ can be recovered from the dispersive variables $U_e,U_b,Y$ by the formulas, see \eqref{Alx12},
\begin{equation}\label{Alx17}
\begin{split}
&n=-|\nabla|Z,\qquad v=-RF+R\times G,\qquad E=-RZ+R\times W,\qquad B=Y+|\nabla|G,\\
&F=\Im(U_e),\qquad G=\Lambda_b^{-1}\Im(U_b)-\Lambda_b^{-2}|\nabla|Y,\qquad Z=\Lambda_e^{-1}\Re(U_e),\qquad W=\Re(U_b).
\end{split}
\end{equation}
The formulas show that the sets of variables $(n,v,E,B,Y)$ and $(U_e,U_b,Y)$ are elliptically equivalent, for example, for any $m\geq 1$
\begin{equation}\label{Alx18}
\|n\|_{\mathcal{H}^m}+\|v\|_{\mathcal{H}^m}+\|E\|_{\mathcal{H}^m}+\|B\|_{\mathcal{H}^m}+\|Y\|_{\mathcal{H}^m}\approx \|U_e\|_{\mathcal{H}^m}+\|U_b\|_{\mathcal{H}^m}+\|Y\|_{\mathcal{H}^m}.
\end{equation}

\subsection{Main notations and definitions}\label{NotDef}

\subsubsection{Littlewood--Paley projections} We fix $\varphi:\mathbb{R}\to[0,1]$ an even smooth function supported in $[-8/5,8/5]$ and equal to $1$ in $[-5/4,5/4]$. Let
\begin{equation*}
\begin{split}
&\varphi_k(x):=\varphi(|x|/2^k)-\varphi(|x|/2^{k-1})\qquad\text{ for any }k\in\mathbb{Z},\,x\in\mathbb{R}^3,\qquad\\ &\varphi_I:=\sum_{m\in I\cap\mathbb{Z}}\varphi_m\text{ for any }I\subseteq\mathbb{R}.
\end{split}
\end{equation*}
For any $B\in\mathbb{R}$ let
\begin{equation*}
\varphi_{\leq B}:=\varphi_{(-\infty,B]},\quad\varphi_{\geq B}:=\varphi_{[B,\infty)},\quad\varphi_{<B}:=\varphi_{(-\infty,B)},\quad \varphi_{>B}:=\varphi_{(B,\infty)}.
\end{equation*}
For any $a<b\in\mathbb{Z}$ and $j\in[a,b]\cap\mathbb{Z}$ let
\begin{equation}\label{Alx80}
\varphi^{[a,b]}_j:=
\begin{cases}
\varphi_j\qquad&\text{ if }a<j<b,\\
\varphi_{\leq a}\qquad&\text{ if }j=a,\\
\varphi_{\geq b}\qquad&\text{ if }j=b.
\end{cases}
\end{equation}

For any $x\in\mathbb{R}$ let $x^+:=\max(x,0)$, $x^-:=\min(x,0)$. Let
\begin{equation*}
\mathcal{J}:=\{(k,j)\in\mathbb{Z}\times\mathbb{Z}:\,j\geq \max(-k,0)\}.
\end{equation*}
For any $(k,j)\in\mathcal{J}$ let
\begin{equation*}
\phii^{(k)}_j(x):=
\begin{cases}
\varphi_{(-\infty,\max(-k,0)]}(x)\quad&\text{ if }\,\,j=\max(-k,0),\\
\varphi_j(x)\quad&\text{ if }\,\,j\geq 1+\max(-k,0).
\end{cases}
\end{equation*}
and notice that, for any $k\in\mathbb{Z}$ fixed,
\begin{equation*}
\sum_{j\geq \max(-k,0)}\phii^{(k)}_j=1.
\end{equation*}
For any interval $I\subseteq\mathbb{R}$ let
\begin{equation*}
\phii^{(k)}_I(x):=\sum_{j\in I,\,(k,j)\in\mathcal{J}}\phii^{(k)}_j(x).
\end{equation*}

Let $P_k$, $k\in\mathbb{Z}$, denote the operator on $\mathbb{R}^3$ defined by the Fourier multiplier $\xi\to \varphi_k(\xi)$. Similarly, for any $I\subseteq \mathbb{R}$ let $P_I$ denote the operator on $\mathbb{R}^3$
 defined by the Fourier multiplier $\xi\to \varphi_I(\xi)$. For any $(k,j)\in\mathcal{J}$ let $Q_{jk}$ denote the operator
\begin{equation}\label{qjk}
(Q_{jk} f)(x):=\phii^{(k)}_j(x)\cdot P_kf(x).
\end{equation}

\subsubsection{Phases, linear profiles, and the $Z$-norm}

An important role will be played by the profiles $V_e,V_b$ defined by
\begin{equation}\label{variables4}
V_e(t):=e^{it\Lambda_e}U_e(t),\qquad V_b(t):=e^{it\Lambda_b}U_b(t),
\end{equation}
where $U_e$ and $U_b$ are the dispersive variables defined in \eqref{Alx15}, and $\Lambda_e=\sqrt{1-d\Delta}$ and $\Lambda_b=\sqrt{1-\Delta}$ as before. We define
\begin{equation}
\begin{split}
&U_{-e}:=\overline{U_e},\qquad U_{-b}:=\overline{U_b};\qquad V_{-e}:=\overline{V_e},\qquad V_{-b}:=\overline{V_b};\\
&\Lambda_{-e}:=-\Lambda_{e},\qquad\Lambda_{-b}:=-\Lambda_b.
\end{split}
\label{notation}\end{equation}
Let \begin{equation}\label{symbol0}\mathcal{P}:=\{e,b,-e,-b\}.\end{equation}
For $\sigma,\mu,\nu\in \mathcal{P}$, we define the associated phase function
\begin{equation}\label{phasedef}
\Phi_{\sigma\mu\nu}(\xi,\eta):=\Lambda_{\sigma}(\xi)-\Lambda_{\mu}(\xi-\eta)-\Lambda_{\nu}(\eta),
\end{equation}
and the corresponding function
\begin{equation} \label{deflambd}
\begin{split}
&\Phi^{+}_{\sigma\mu\nu}(\alpha,\beta):=\Phi_{\sigma\mu\nu}(\alpha e,\beta e)=\lambda_{\sigma}(\alpha)-\lambda_{\mu}(\alpha-\beta)-\lambda_{\nu}(\beta),\\
&\lambda_e(r)=-\lambda_{-e}(r):=\sqrt{1+dr^2},\qquad\lambda_b(r)=-\lambda_{-b}(r):=\sqrt{1+r^2},
\end{split}
\end{equation}
where $e\in\mathbb{S}^{1}$ and $\alpha,\beta\in\mathbb{R}$.
If $(\mu,\nu)\in\mathcal{P}\times\mathcal{P}\setminus\{(e,-e),(-e,e),(b,-b),(-b,b)\}$, by Proposition \ref{spaceres} for any $\xi\in\mathbb{R}^2$ there exists a unique $\eta=p(\xi)\in\mathbb{R}^2$ so that $(\nabla_{\eta}\Phi_{\sigma\mu\nu})(\xi,\eta)=0$ (a space resonance point). We define, for a sufficiently large constant $\D_0$ that depends only on the parameter $d\in(0,1)$,
\begin{equation}\label{psidag}
\Psi_{\sigma\mu\nu}(\xi):=\Phi_{\sigma\mu\nu}(\xi,p(\xi)),\qquad \Psi_{\sigma}^{\dagger}(\xi):=2^{\mathcal{D}_0}(1+|\xi|)\inf_{\mu,\nu\in\mathcal{P};\nu+\mu\neq 0}|\Psi_{\sigma\mu\nu}(\xi)|,
\end{equation}
and notice that these functions are radial. The functions $\Psi_e^{\dagger}$ and $\Psi_b^{\dagger}$ are described in Remark \ref{largeres}; in particular,  $\Psi_e^{\dagger}\geq 10$ while $\Psi_b^{\dagger}$ vanishes on two spheres $|\xi|=\gamma_{1,2}=\gamma_{1,2}(d)\in(0,\infty)$. These spheres correspond to space-time resonances. For $n\in\mathbb{Z}$ we define the operators $A_{n}^{\sigma}$ by
\begin{equation}\label{aop}
\widehat{A_{n}^{\sigma}f}(\xi):=\varphi_{-n}(\Psi_{\sigma}^{\dagger}(\xi))\cdot\widehat{f}(\xi),
\end{equation}
for $\sigma\in\{e,b\}$. Given an integer $j\geq 0$ we define the operators $A^\sigma_{n,(j)}$, $n\in\{0,\ldots,j+1\}$, by
\begin{equation*}
A_{0,(j)}^\sigma:=\sum _{n'\leq 0}A_{n'}^\sigma,\qquad A_{j+1,(j)}^\sigma:=\sum _{n'\geq j+1}A_{n'}^\sigma,\qquad A_{n,(j)}^\sigma:=A_{n}^\sigma\,\,\text{ if }\,\,0<n<j+1.
\end{equation*}

We are now ready to define the main $Z$-norm.

\begin{definition}\label{MainZDef} For $\sigma\in\{e,b\}$ we define
\begin{equation}\label{sec5}
Z_1^\sigma:=\{f\in L^2(\mathbb{R}^3):\,\|f\|_{Z_1^\sigma}:=\sup_{(k,j)\in\mathcal{J}}\|Q_{jk}f\|_{B^\sigma_{j}}<\infty\},
\end{equation}
where, with $\beta:=10^{-6}$,
\begin{equation}\label{znorm2}
\|g\|_{B_j^{\sigma}}:=\sup_{0\leq n\leq j+1}2^{(1+\beta)j-4\beta n}\|A_{n,(j)}^{\sigma}g\|_{L^{2}}.
\end{equation}
Finally, with $N_1=N_0/2+2$ as before, $\mathcal{V}_{N_1}$ as in \eqref{coordrotm}, and $D^\alpha=\partial_1^{\alpha_1}\partial_2^{\alpha_2}\partial_3^{\alpha_3}$, we define
\begin{equation}\label{znorm}
Z:=\big\{(f_e,f_b)\in L^2\times L^2:\,\|(f_e,f_b)\|_{Z}:=\sup_{\mathcal{L}\in \mathcal{V}_{N_1},\,|\alpha|\leq 4}\big[\|D^\alpha\mathcal{L}f_e\|_{Z_1^e}+\|D^\alpha\mathcal{L}f_b\|_{Z_1^b}\big]<\infty\big\}.
\end{equation}
\end{definition}

Notice that, when $\sigma=e$ we have the simpler formula,
\begin{equation*}
\|g\|_{B_j^{e}}\approx 2^{(1+\beta)j}\|g\|_{L^{2}}.
\end{equation*}
Similarly if $j\lesssim 1$ then $\|g\|_{B_j^{b}}\approx\|g\|_{L^{2}}$. The operators $A_{n,(j)}^{\sigma}$ are relevant only when $\sigma=b$ and $j\gg 1$, to localize to thin neighborhoods of the space-time resonant sets. The small factors $2^{-4\beta n}$ in \eqref{znorm2}, which are connected to the operators $A_{n,(j)}^{b}$, are important only in the space-time resonant analysis, in the proof of the bound \eqref{top1} in Lemma \ref{Reso01}.

\subsection{The main bootstrap proposition}\label{bootstrap0} Our main result is the following proposition:

\begin{proposition}\label{bootstrap} Suppose $(n,v,E,B)$ is a solution to \eqref{systI}--\eqref{constr1} on some time
interval $[0,T]$, $T\in[1,\bar{\eps}/\delta_0]$, with initial data $(n_0,v_0,E_0,B_0)$, and define $(V_e,V_b)$ as in \eqref{variables4} and $Y=B-\nabla\times v$. Assume that
\begin{equation}\label{bootstrap1}
\|(n_0,v_0,E_0,B_0)\|_{\widetilde{\H}^{N_0}}+\|(V_e(0),V_b(0))\|_{Z}\lesssim \bar{\eps}
\end{equation}
and
\begin{equation}\label{bootstrapV1}
\|(1+|x|^2)^{1/4}Y_0\|_{\mathcal{H}^{N_1}}\leq \d_{0}\leq\bar{\eps}.
\end{equation}
In addition, assume that for any $t\in[0,T]$,
\begin{equation}\label{bootstrap2}
\|(n(t),v(t),E(t),B(t))\|_{\widetilde{\H}^{N_0}}+\|(V_e(t),V_b(t))\|_{Z}\leq \overline{C}\bar{\eps}
\end{equation}
and
\begin{equation}\label{bootstrapV2}
\|(1+|x|^2)^{1/4}Y(t)\|_{\mathcal{H}^{N_1}}\leq \overline{C}\d_{0},
\end{equation}
for some sufficiently large constant $\overline{C}$. Then, for any $t\in[0,T]$,
\begin{equation}\label{bootstrap3}
\|(n(t),v(t),E(t),B(t))\|_{\widetilde{\H}^{N_0}}+\|(V_e(t),V_b(t))\|_{Z}\leq \overline{C}\bar{\eps}/2
\end{equation}
and
\begin{equation}\label{bootstrapV3}
\|(1+|x|^2)^{1/4}Y(t)\|_{\mathcal{H}^{N_1}}\leq \overline{C}\d_{0}/2.
\end{equation}
\end{proposition}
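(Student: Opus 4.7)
The plan is to deduce the three improved bounds \eqref{bootstrap3}--\eqref{bootstrapV3} from the a priori assumptions \eqref{bootstrap2}--\eqref{bootstrapV2} by combining energy estimates (for the Sobolev and weighted norms) with a dispersive Fourier-analytic argument (for the $Z$-norm). The argument splits cleanly into four stages.

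\emph{First, pointwise decay estimates.} Using the $Z$-norm control \eqref{bootstrap2}, standard linear estimates for the Klein--Gordon propagators $e^{-it\Lambda_e}$ and $e^{-it\Lambda_b}$, and the rotation vector-fields built into the definition of $Z$, I would derive pointwise bounds of the form
$$\|U_\sigma(t)\|_{\mathcal{W}^{k,\infty}} \lesssim \overline{C}\bar{\varepsilon}(1+t)^{-1+O(\beta)},\qquad \sigma\in\{e,b\},$$
for $k$ well below $N_1$. Via the elliptic equivalence \eqref{Alx18} this controls the dispersive parts of $n,v,E,B$, while the vorticity obeys $\|Y(t)\|_{L^\infty}\lesssim\overline{C}\delta_0$ by Sobolev embedding of \eqref{bootstrapV2}. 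These are the pointwise ingredients fed into the energy estimates below.

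\emph{Second, Sobolev energy estimate.} Applying $\mathcal{L}\in\mathcal{V}_{N_0}$ to \eqref{systI} and symmetrizing in the usual way to cancel the derivative loss in the quasilinear terms $v\cdot\nabla v$ and $\mathrm{div}((1+n)v)$ (using the antisymmetry of $v\times B$ at the highest order), one gets an energy identity whose right-hand side is bounded by
$$C\bigl(\|\nabla v\|_{L^\infty}+\|\nabla n\|_{L^\infty}+\|B\|_{L^\infty}\bigr)\|(n,v,E,B)\|_{\widetilde{\H}^{N_0}}^{\,2}.$$
The bracketed coefficient is $L^1_t([0,T])$-integrable, with norm $\lesssim \overline{C}\bar{\varepsilon}\log(2+T)+\overline{C}\delta_0 T\lesssim\overline{C}\bar{\varepsilon}|\log\bar{\varepsilon}|$ since $T\leq \bar{\varepsilon}/\delta_0$, and Gronwall gives \eqref{bootstrap3} for the Sobolev piece provided $\bar{\varepsilon}$ was chosen small enough in terms of $\overline{C}$.

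\emph{Third, weighted vorticity estimate.} The vorticity equation in \eqref{KG} is transport-type,
$$\partial_t Y=v\cdot\nabla Y+(Y\cdot\nabla)v-Y\,\mathrm{div}(v),$$
with \emph{no} source term quadratic in $U$. Commuting $\mathcal{L}\in\mathcal{V}_{N_1}$ past the equation, weighting by $(1+|x|^2)^{1/4}$, and estimating the resulting commutators (in particular the one between the rotation vector-fields and $v\cdot\nabla$) by the Sobolev bounds on $v$ and on $\langle x\rangle v$ already available from the $Z$ norm, one obtains
$$\frac{d}{dt}\bigl\|(1+|x|^2)^{1/4}Y\bigr\|_{\mathcal{H}^{N_1}}^{\,2}\lesssim C(t)\bigl\|(1+|x|^2)^{1/4}Y\bigr\|_{\mathcal{H}^{N_1}}^{\,2},$$
with $\int_0^T C(s)\,ds\ll 1$, which yields \eqref{bootstrapV3}.

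\emph{Fourth, the $Z$-norm bootstrap.} This is the crux. Writing the Duhamel formula for $V_\sigma$, $\sigma\in\{e,b\}$, in the form
$$\widehat{V_\sigma}(t,\xi)=\widehat{V_\sigma}(0,\xi)+\sum_{\mu,\nu}\int_0^t\!\!\int_{\mathbb{R}^3} e^{is\Phi_{\sigma\mu\nu}(\xi,\eta)}m_{\sigma\mu\nu}(\xi,\eta)\widehat{V_\mu}(s,\xi-\eta)\widehat{V_\nu}(s,\eta)\,d\eta\,ds+\mathcal{N}_Y,$$
where $\mathcal{N}_Y$ collects the $Y$-dependent contributions, I would localize each $Q_{jk}V_\sigma$ dyadically in frequency and, via $A_{n,(j)}^\sigma$, in the distance to the space-time resonant set, and estimate in $B_j^\sigma$. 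Following the dichotomy of Section~\ref{MainIdea}, the dispersive--dispersive pieces $O(U^2)$ require normal-form/integration by parts in $s$ away from time resonances and in $\eta$ away from space resonances; the weight $2^{-4\beta n}$ in \eqref{znorm2} is precisely the calibration needed to absorb the logarithmic loss at the space-time resonant spheres $|\xi|=\gamma_{1,2}$ of $\Psi_b^\dagger$ in \eqref{psidag}. The transport--dispersive pieces $O(UY)$ are controllable because $Y$ carries the factor $\delta_0$ and is integrated over $[0,T]$ with $T\le \bar{\varepsilon}/\delta_0$, producing a net small constant. The transport--transport pieces $O(Y^2)$ gain $\delta_0^2$ and are the easiest.

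The main obstacle is the fourth stage, and specifically the $O(U^2)$ analysis for $V_b$ near the space-time resonant spheres, where stationary phase degenerates simultaneously in $s$ and in $\eta$. Here one must carefully exploit the transversality along the resonant manifold (of the type encoded in $\Psi_\sigma^\dagger$ in \eqref{psidag}) together with the angular regularity of the linear profile supplied by $\mathcal{V}_{N_1}$, matching the $2^{-n}$ thickness of the $A_n^b$ localization to the $2^{-4\beta n}$ weight in $B_j^b$. Relative to \cite{GeMa,IoPa2} the genuinely new features are the transport-type interactions in $\mathcal{N}_Y$, but these turn out to be softer thanks to the $\delta_0$ smallness of $Y$ combined with the $T\lesssim 1/\delta_0$ lifespan, whereas the $O(U^2)$ resonant analysis retains the full difficulty of the irrotational problem.
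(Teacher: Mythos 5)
Your four-stage outline matches the paper's architecture exactly: energy estimate for the full $\widetilde{\H}^{N_0}$ norm, weighted transport estimate for the vorticity, and a $Z$-norm bootstrap via the Duhamel formula and a space-time resonance analysis, with the three pieces glued together by the pointwise decay coming out of the $Z$-norm. The key structural observations — that the vorticity equation carries no $O(U^2)$ source, that the $O(UY)$ and $O(Y^2)$ interactions are controlled by the product $\delta_0\cdot T\lesssim\bar\varepsilon$, and that the $2^{-4\beta n}$ weight in \eqref{znorm2} is tuned to the $A_n^b$ localization near the resonant spheres — are all correct and are what the paper relies on.

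There is, however, a genuine gap in your second stage. You quote a pointwise decay of the form $(1+t)^{-1+O(\beta)}$ and then compute the $L^1_t$ norm of the coefficient as $\overline{C}\bar\varepsilon\log(2+T)+\overline{C}\delta_0 T$, claiming this is $\lesssim\overline{C}\bar\varepsilon|\log\bar\varepsilon|$. This last step fails: the time of existence $T$ can be as large as $\bar\varepsilon/\delta_0$, and since $\delta_0$ ranges down to $0$ independently of $\bar\varepsilon$, the quantity $\log(2+T)$ is \emph{not} bounded by $|\log\bar\varepsilon|$. With only a $t^{-1}$ decay rate the Sobolev energy estimate does not close for small $\delta_0$, which would undermine precisely the feature the theorem is after (time of existence $\sim 1/\delta_0$, including $\delta_0\to 0$). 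The paper avoids this because the linear estimate \eqref{LinftyBd2} supplies a strictly integrable rate $(1+t)^{-1-\beta}$ — the $Z$-norm and the rotation vector-fields in $\mathcal{V}_{N_1}$ are calibrated to produce this extra $\beta$ — so that $\int_0^T\|U(t)\|_{\mathcal{W}^{N_0/2,\infty}}\,dt\lesssim\bar\varepsilon$ with no logarithm, uniformly in $T$. Your sketch needs to make this precise rather than absorbing it into an $O(\beta)$ and a spurious log. A minor further point: the transport form of the vorticity equation should read $\partial_tY=-(v\cdot\nabla)Y+(Y\cdot\nabla)v-Y\,\mathrm{div}\,v$ (sign on the convective term), but this does not affect the structure of your argument.
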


The constant $\overline{C}$ can be fixed sufficiently large, depending only on $d$, and the constant $\overline{\eps}$ is small relative to $1/\overline{C}$. Given Proposition \ref{bootstrap}, Theorem \ref{MainThm} follows using a local existence result and a continuity argument.
See \cite[Sections 2 and 3]{IoPa2} (in particular Proposition 2.2 and Proposition 2.4) for similar arguments.

The rest of this paper is concerned with the proof of Proposition \ref{bootstrap}. This proposition follows from
Proposition \ref{BootstrapEE1}, Proposition \ref{BootstrapEE2}, and Proposition \ref{BootstrapZNorm}.

\section{Some lemmas}\label{lemmas}

In this section we collect several lemmas that are used in the rest of the paper.  We fix a sufficiently large constant $\mathcal{D}\geq 10\mathcal{D}_0$.

\subsubsection{Integration by parts}\label{Ipa} We start with two lemmas that are used often in integration by parts arguments. See \cite[Lemma 5.4]{IoPa2} and \cite[Lemma ]{DeIoPa} for the proofs.

\begin{lemma}\label{tech5} Assume that $0<\eps\leq 1/\eps\leq K$, $N\geq 1$ is an integer, and $f,g\in C^{N+1}(\mathbb{R}^3)$. Then
\begin{equation}\label{ln1}
\Big|\int_{\mathbb{R}^3}e^{iKf}g\,dx\Big|\lesssim_N (K\eps)^{-N}\big[\sum_{|\alpha|\leq N}\eps^{|\alpha|}\|D^\alpha_xg\|_{L^1}\big],
\end{equation}
provided that $f$ is real-valued,
\begin{equation}\label{ln2}
|\nabla_x f|\geq \mathbf{1}_{{\mathrm{supp}}\,g},\quad\text{ and }\quad\|D_x^\alpha f \cdot\mathbf{1}_{{\mathrm{supp}}\,g}\|_{L^\infty}\lesssim_N\eps^{1-|\alpha|},\,2\leq |\alpha|\leq N+1.
\end{equation}
\end{lemma}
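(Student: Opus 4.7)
The plan is the standard non-stationary phase argument: construct a differential operator that reproduces the oscillatory factor, then integrate by parts $N$ times and bound the resulting symbol in $L^1$.

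First I would introduce the vector field
$$a:=\frac{\nabla f}{iK|\nabla f|^{2}},\qquad Lu:=a\cdot\nabla u,$$
which is well defined on the support of $g$ since \eqref{ln2} gives $|\nabla f|\geq 1$ there. By direct computation $Le^{iKf}=e^{iKf}$, so I may write, after multiplication by a smooth cutoff equal to $1$ on $\mathrm{supp}\,g$ (harmless because all boundary terms vanish in $\mathbb{R}^{3}$),
$$\int_{\mathbb{R}^{3}}e^{iKf}g\,dx=\int_{\mathbb{R}^{3}}\bigl(L e^{iKf}\bigr)g\,dx=\int_{\mathbb{R}^{3}}e^{iKf}\,L^{*}g\,dx,\qquad L^{*}g=-\nabla\!\cdot(a g).$$
Iterating $N$ times gives $\int e^{iKf}g\,dx=\int e^{iKf}(L^{*})^{N}g\,dx$, so the task reduces to proving
\begin{equation}\label{eq:target}
\|(L^{*})^{N}g\|_{L^{1}}\;\lesssim_{N}\;K^{-N}\sum_{|\alpha|\leq N}\varepsilon^{|\alpha|-N}\|D^{\alpha}g\|_{L^{1}}.
\end{equation}

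The core estimate, proved by induction on $m\geq 0$ using Fa\`a di Bruno (or just Leibniz applied to the product $\nabla f\cdot|\nabla f|^{-2}$), is
$$|D^{m}a|\;\lesssim_{m}\;\frac{1}{K\,\varepsilon^{m}}\qquad\text{on }\mathrm{supp}\,g,$$
for all $m\geq 0$. Indeed, the hypothesis $|D^{\alpha}f|\lesssim\varepsilon^{1-|\alpha|}$ for $2\leq|\alpha|\leq N+1$ makes every term in the expansion a product of factors of the form $D^{\alpha_{i}}f$ (each contributing $\varepsilon^{1-|\alpha_{i}|}$) together with the overall $1/K$ and bounded factors coming from $1/|\nabla f|^{2}$; the total $\varepsilon$-exponent is $m-\sum|\alpha_{i}|$, which collapses to $-m$ because $\sum(|\alpha_{i}|-1)=m$ when all the derivatives hit the second-order or higher data. (The case $\alpha_{i}=1$ only improves the bound because it contributes $\varepsilon^{0}=1$ and uses a derivative; one checks that it never worsens the homogeneity.)

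With this symbol bound in hand I would prove \eqref{eq:target} by induction on $N$. Writing $L^{*}g=-a\cdot\nabla g-(\nabla\!\cdot a)g$, each application of $L^{*}$ trades one derivative of $g$ for a factor of $K^{-1}$, or leaves the derivative count unchanged at the price of $K^{-1}\varepsilon^{-1}$; distributing these possibilities by Leibniz across $N$ iterations produces exactly the sum in \eqref{eq:target} (the extreme terms being the one with no derivative on $g$, of size $K^{-N}\varepsilon^{-N}\|g\|_{L^{1}}$, and the one with all derivatives on $g$, of size $K^{-N}\|D^{N}g\|_{L^{1}}$). Taking absolute values inside the final integral, $|e^{iKf}|=1$, yields the desired \eqref{ln1}.

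The only mildly delicate step is the inductive symbol bound $|D^{m}a|\lesssim K^{-1}\varepsilon^{-m}$: one must verify that derivatives landing on $1/|\nabla f|^{2}$ (rather than on the numerator $\nabla f$) do not spoil the homogeneity, which follows from the chain rule together with the lower bound $|\nabla f|\geq 1$ and the assumption $\varepsilon\leq 1$ (so that all powers $\varepsilon^{1-|\alpha_{i}|}$ are bounded above by $\varepsilon^{1-|\alpha_{i}|}$ with no accidental large constants). Everything else is routine bookkeeping.
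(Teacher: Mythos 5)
Your proposal is correct, and it is the standard non-stationary-phase integration-by-parts argument. The paper does not reproduce a proof of this lemma; it cites \cite[Lemma 5.4]{IoPa2} and the corresponding lemma in \cite{DeIoPa}, both of which use exactly the operator $L=(iK)^{-1}|\nabla f|^{-2}\nabla f\cdot\nabla$, its transpose $L^*g=-\nabla\cdot(ag)$, and the symbol estimate $|D^m a|\lesssim K^{-1}\varepsilon^{-m}$ on $\mathrm{supp}\,g$ that you derive. Two small remarks to tighten the write-up: (a) the cutoff you introduce is unnecessary, since $a g$ is already supported in $\mathrm{supp}\,g$, so the boundary terms in the $N$ integrations by parts vanish automatically; (b) the cleanest way to justify $|D^m a|\lesssim K^{-1}\varepsilon^{-m}$ is to observe that $a_j=(iK)^{-1}F_j(\nabla f)$ with $F_j(v)=v_j/|v|^2$ smooth and homogeneous of degree $-1$ away from the origin, so Fa\`a di Bruno gives terms of size $|\nabla f|^{-1-p}\prod_i \varepsilon^{-m_i}$ with $\sum m_i=m$, $m_i\geq 1$, $p\leq m$; the lower bound $|\nabla f|\geq 1$ kills the negative power of $|\nabla f|$ and the hypothesis applies because each $m_i+1$ lies in $[2,N+1]$. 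This also makes transparent that no upper bound on $|\nabla f|$ is needed, which your phrasing leaves slightly obscure.
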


We will need another result about integration by parts using the rotation vector-fields $\Omega_j$. The lemma below (which is used only in the proof of the more technical Lemma \ref{Reso01}) follows from Lemma 3.8 in \cite{DeIoPa}.

\begin{lemma}\label{RotIBP}
Assume that $t\in[2^{m}-1,2^{m+1}]$, $m\geq 0$, $1\leq A\lesssim 2^m$, and
\begin{equation}\label{hypo}
\begin{split}
\Vert f\Vert_{\H^{20}}+\Vert g\Vert_{\H^{20}}+\sup_{0\leq\vert\alpha\vert\le N}A^{-\vert\alpha\vert}\Vert D^\alpha\widehat{f}\,\Vert_{L^2}&\le 1,\\
\sup_{\xi,\eta}\sup_{\vert\alpha\vert\le N}2^{-|\alpha|m/2}\vert D^\alpha_\eta n(\xi,\eta)\vert&\le 1.
\end{split}
\end{equation}
Assume that $\Phi=\Phi_{\sigma\mu\nu}$ for some $\sigma,\mu,\nu\in\{e,b,-e,-b\}$. For $\xi\in\mathbb{R}^3$ and $p\in[-m/2,0]$ let
\begin{equation*}
I^1_p(\xi):=\int_{\mathbb{R}^3}e^{it\Phi(\xi,\eta)}n(\xi,\eta)\varphi_p((\Omega_1)_\eta\Phi(\xi,\eta))\psi_1(\xi,\eta)\widehat{f}(\xi-\eta)\widehat{g}(\eta)d\eta,
\end{equation*}
where $(\Omega_1)_\eta=\eta_2\partial_{\eta_3}-\eta_3\partial_{\eta_2}$ is the rotation vector-field defined in \eqref{difop},
\begin{equation}\label{hypo5}
\psi_1(\xi,\eta):=\varphi_{\geq -\D}(\mathrm{Pr}_1(\xi))\varphi_{\geq -\D}(\mathrm{Pr}_1(\eta))\varphi_{\geq -\D}(\mathrm{Pr}_1(\xi-\eta))\cdot \varphi_{\leq \D}(\xi)\varphi_{\leq \D}(\eta)\varphi_{\leq\D}(\xi-\eta),
\end{equation}
and $\mathrm{Pr}_1:\mathbb{R}^3\to\mathbb{R}^2$, $\mathrm{Pr}_1(v_1,v_2,v_3):=(v_2,v_3)$. Then
\begin{equation}\label{OmIBP}
\vert I^1_p(\xi)\vert\lesssim_N  (2^{p}2^{m/2})^{-N}+(A2^{-m})^N+2^{-4m}.
\end{equation}
A similar bound holds for the integrals $I_p^2$ and $I_p^3$ obtained by replacing
the vector-field $\Omega_1$ with the vector-fields $\Omega_2$ and $\Omega_3$ respectively, and replacing the cutoff function $\psi_1$ with cutoff functions $\psi_2$ and $\psi_3$ respectively (defined as in \eqref{hypo5}, but with the projection $\mathrm{Pr}_1$ replaced by the projections $\mathrm{Pr}_2(v_1,v_2,v_3):=(v_1,v_3)$ and $\mathrm{Pr}_3(v_1,v_2,v_3):=(v_1,v_2)$ respectively). In addition, if $(1+\beta/20)\nu\geq -m$, then the same bounds hold when $I_p^j$, $j\in\{1,2,3\}$, are replaced by the integrals (notice the additional localization in modulation factor $\varphi_\nu(\Phi(\xi,\eta))$)
\begin{equation*}
\widetilde{I^j_p}(\xi):=\int_{\mathbb{R}^3}e^{it\Phi(\xi,\eta)}\varphi_\nu(\Phi(\xi,\eta))n(\xi,\eta)\varphi_p((\Omega_j)_\eta\Phi(\xi,\eta))\psi_j(\xi,\eta)\widehat{f}(\xi-\eta)\widehat{g}(\eta)d\eta.
\end{equation*}
\end{lemma}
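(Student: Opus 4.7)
The plan is to integrate by parts $N$ times using the rotation vector field $\Omega_1$ acting in the $\eta$-variable (the analogous arguments for $I_p^2, I_p^3$ with $\Omega_2,\Omega_3$ and the corresponding cutoffs $\psi_2,\psi_3$ are identical). Because $\Omega_1=\eta_2\partial_{\eta_3}-\eta_3\partial_{\eta_2}$ is divergence-free in $\eta$, integration by parts in $\eta$ produces no boundary terms, and on the support of $\varphi_p(\Omega_1\Phi)$ one has $|\Omega_1\Phi|\sim 2^p$. From the identity
\begin{equation*}
e^{it\Phi(\xi,\eta)}=\frac{1}{it\,(\Omega_1\Phi)(\xi,\eta)}\,\Omega_1\!\left(e^{it\Phi(\xi,\eta)}\right),
\end{equation*}
iterating $N$ times yields a representation of $I_p^1(\xi)$ as the integral of $e^{it\Phi}$ against a sum of terms obtained by distributing $N$ applications of $\Omega_1$ among the six factors $n$, $\widehat{f}(\xi-\cdot)$, $\widehat{g}$, $\varphi_p(\Omega_1\Phi)$, $\psi_1$, and the $N$ reciprocals $(\Omega_1\Phi)^{-1}$. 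The prefactor $(it\,\Omega_1\Phi)^{-N}$ supplies a gain $(2^{m+p})^{-N}$, and because $\psi_1$ confines $\xi,\eta,\xi-\eta$ to a bounded set, one has $|D_\eta^\alpha(\Omega_1\Phi)|\lesssim 1$ and $|D_\eta^\alpha\psi_1|\lesssim 1$, so derivatives falling on $\varphi_p(\Omega_1\Phi)$, $\psi_1$, or $(\Omega_1\Phi)^{-1}$ contribute only bounded factors (after absorbing attendant powers of $2^{-p}$ into the main gain).

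The two remaining sources of loss are derivatives on $n$ and on $\widehat{f}(\xi-\eta)$. Each $\Omega_1$ on $n$ costs a factor $2^{m/2}$ in $L^\infty$ by the second hypothesis in (\ref{hypo}). For $\widehat{f}(\xi-\eta)$ I invoke the pointwise identity
\begin{equation*}
(\Omega_1)_\eta\widehat{f}(\xi-\eta)=(\Omega_1\widehat{f})(\xi-\eta)-\xi_2(\partial_3\widehat{f})(\xi-\eta)+\xi_3(\partial_2\widehat{f})(\xi-\eta),
\end{equation*}
so that, using $|\xi|\lesssim 1$ on the support of $\psi_1$, iterated applications of $(\Omega_1)_\eta$ to $\widehat{f}(\xi-\eta)$ reduce to a combination of rotation and Euclidean derivatives of $\widehat{f}$; these are respectively $L^2$-controlled via $\|f\|_{\H^{20}}\le 1$ (for up to $20$ rotations) and via $\|D^\alpha\widehat{f}\|_{L^2}\le A^{|\alpha|}$ (losing one factor of $A$ per Euclidean derivative). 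Derivatives on $\widehat{g}$ are $L^2$-controlled by $\|g\|_{\H^{20}}\le 1$ provided no more than $20$ fall on $\widehat{g}$. Writing $k_1,k_2,k_3$ for the number of derivatives on $n,\widehat{f},\widehat{g}$ with $k_1+k_2+k_3=N$ and applying Cauchy--Schwarz in $\eta$ (placing the factor containing $\widehat{f}$ or $\widehat{g}$ in $L^2$ and everything else in $L^\infty$), one optimizes the distribution to obtain, in the ``good'' range $k_3\le 20$,
\begin{equation*}
|I_p^1(\xi)|\lesssim_N (2^p 2^{m/2})^{-N}+(A\cdot 2^{-m})^{N},
\end{equation*}
the first term corresponding to all derivatives landing on $n$ and the second to all derivatives landing on $\widehat{f}$ through Euclidean corrections.

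The residual $2^{-4m}$ absorbs the ``overflow'' configurations where the hypotheses run out (e.g.\ $k_3>20$, or more than $20$ rotation-derivatives on $\widehat{f}$): in those branches I stop early at $N_0\le 20$ IBP steps, obtaining $(2^p 2^{m/2})^{-N_0}\le 2^{-N_0 m/2}\le 2^{-4m}$ for $N_0$ chosen sufficiently large, which is legitimate since $p\le 0$ and $\|g\|_{\H^{20}}\le 1$. For the variant $\widetilde I_p^j$ with the modulation cutoff $\varphi_\nu(\Phi)$, the only new scenario is $\Omega_1$ landing on $\varphi_\nu(\Phi)$, producing a factor $(\Omega_1\Phi)/2^\nu\sim 2^{p-\nu}$; combined with a single IBP gain $2^{-m-p}$ this yields a net $2^{-m-\nu}\lesssim 2^{\beta m/20}$ by the hypothesis $(1+\beta/20)\nu\ge-m$, and is absorbed into the implicit $N$-dependent constant. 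The chief technical obstacle is the careful combinatorial bookkeeping, combined with the control of the $\xi$-corrections in the identity for $(\Omega_1)_\eta\widehat{f}(\xi-\eta)$: exactly because $\psi_1$ enforces a bounded $\xi$-support, those corrections remain harmless, allowing rotation-derivatives of $\widehat{f}(\xi-\eta)$ to be exchanged for ordinary derivatives of $\widehat{f}$ --- without this localization the $(A2^{-m})^N$ branch would fail.
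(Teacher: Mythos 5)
The paper itself does not prove this lemma: it is quoted with the comment that it ``follows from Lemma~3.8 in \cite{DeIoPa}'', so you are supplying a proof the authors chose to cite rather than reproduce. Unfortunately, your argument has a real gap in exactly the regime where the lemma is delicate.

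The problem is in the second term of your claimed bound. After $N$ iterations of the operator $L^\ast=-\Omega_1\circ\frac{1}{it\,\Omega_1\Phi}$, every resulting term carries the full prefactor $(it\,\Omega_1\Phi)^{-N}$, of size $\approx 2^{-N(m+p)}$, \emph{regardless} of which factor of the integrand absorbs the $\Omega_1$ derivatives. In particular, the branch you describe as ``all $N$ derivatives landing on $\widehat f$ through Euclidean corrections'' contributes a size $\lesssim A^N\cdot 2^{-N(m+p)} = (A2^{-m-p})^N$, not $(A2^{-m})^N$; since $p\le 0$ this is worse by the factor $2^{-Np}\ge 1$. The two bounds are not interchangeable: taking for instance $p=-m/4$ and $A=2^{3m/4}$ (both admissible by the hypotheses), your branch gives $(A2^{-m-p})^N=1$, whereas the lemma asserts $(A2^{-m})^N=2^{-Nm/4}$, an arbitrarily large gain. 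This is precisely the regime that occurs in the application inside the proof of Lemma~\ref{Reso01}, where $2^p=\kappa_\theta\approx 2^{\beta m/40-m/2}$ and the inputs $f_{j_1,k_1},g_{j_2,k_2}$ have physical-space scale $2^{\max(j_1,j_2)}$ with $\max(j_1,j_2)$ allowed up to $14m/15$; there $A2^{-m-p}\approx 2^{(13/30-\beta/40)m}\gg 1$, so your bound is useless while the stated $(A2^{-m})^N$ gives the required decay. In short, the iterated $\Omega_1$-integration by parts alone cannot produce the $p$-independent factor $(A2^{-m})^N$; some further idea (present in \cite{DeIoPa}) is needed to decouple the Euclidean-derivative losses on $\widehat f$ from the $(\Omega_1\Phi)^{-N}$ denominator.

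Two smaller points. First, the ``overflow'' paragraph asserts that stopping after $N_0\le 20$ steps yields $(2^p2^{m/2})^{-N_0}\le 2^{-N_0 m/2}$; this inequality is false for $p<0$ (when $p=-m/2$ the left side equals $1$), so the $2^{-4m}$ safety term is not actually reached by that device. Second, after iterating $(\Omega_1)_\eta$ on $\widehat f(\xi-\eta)$ you produce genuinely mixed strings $D^\gamma\Omega^r\widehat f$, and the hypothesis \eqref{hypo} only controls the pure cases $\|\Omega^r\widehat f\|_{L^2}\lesssim\|f\|_{\H^{20}}$ and $\|D^\alpha\widehat f\|_{L^2}\le A^{|\alpha|}$; passing to the mixed quantities requires an additional commutation/interpolation argument that you should make explicit. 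The first issue is the substantive one; the others are bookkeeping that could likely be repaired, but not without first resolving the loss of the factor $2^{-Np}$.
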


\subsubsection{Linear and bilinear operators}\label{Lbl} To bound bilinear operators, we often use the following simple lemma.

\begin{lemma}\label{L1easy}
Assume $f_1,f_2,f_3\in L^2(\mathbb{R}^3)$, and $M:(\mathbb{R}^3)^2\to\mathbb{C}$ is a continuous compactly supported function. Then
\begin{equation}\label{ener62}
\Big|\int_{(\mathbb{R}^3)^2}M(\xi_1,\xi_2)\cdot\widehat{f_1}(\xi_1)\widehat{f_2}(\xi_2)\widehat{f_3}(-\xi_1-\xi_2)\,d\xi_1d\xi_2\Big|\lesssim \big\|\mathcal{F}^{-1}M\big\|_{L^1}\|f_1\|_{L^{p_1}}\|f_2\|_{L^{p_2}}\|f_3\|_{L^{p_3}},
\end{equation}
for any exponents $p_1,p_2,p_3\in[1,\infty]$ satisfying $1/p_1+1/p_2+1/p_3=1$. As a consequence
\begin{equation}\label{ener62.1}
\Big\|\mathcal{F}_{\xi}^{-1}\Big\{\int_{\mathbb{R}^3}M(\xi,\eta)\widehat{f_2}(\eta)\widehat{f_3}(-\xi-\eta)\,d\eta\Big\}\Big\|_{L^{q}}\lesssim \big\|\mathcal{F}^{-1}M\big\|_{L^1}\|f_2\|_{L^{p_2}}\|f_{3}\|_{L^{p_3}},
\end{equation}
if $q,p_2,p_3\in[1,\infty]$ satisfy $1/p_2+1/p_3=1/q$.
\end{lemma}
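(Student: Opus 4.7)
The plan is to represent the multiplier $M$ as the Fourier transform of its $L^1$ kernel $K := \mathcal{F}^{-1}M$, interchange the order of integration, and then reduce to a standard trilinear Parseval identity to which Hölder's inequality can be applied. This is the classical Coifman--Meyer-type reduction.

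First I would fix a normalization so that
\begin{equation*}
M(\xi_1,\xi_2) = c\int_{\mathbb{R}^6} K(y_1,y_2)\, e^{i(\xi_1\cdot y_1 + \xi_2\cdot y_2)}\, dy_1\, dy_2
\end{equation*}
with $K = \mathcal{F}^{-1}M$ and some universal constant $c$. Substituting this into the left-hand side of \eqref{ener62} and applying Fubini (justified because $M$ is continuous and compactly supported, so $K$ is Schwartz, and a standard density argument reduces matters to Schwartz $f_j$), I obtain
\begin{equation*}
c\int_{\mathbb{R}^6} K(y_1,y_2)\Big[\int_{(\mathbb{R}^3)^2} e^{i\xi_1\cdot y_1}\widehat{f_1}(\xi_1)\, e^{i\xi_2\cdot y_2}\widehat{f_2}(\xi_2)\,\widehat{f_3}(-\xi_1-\xi_2)\, d\xi_1\, d\xi_2\Big] dy_1\, dy_2.
\end{equation*}
The factors $e^{i\xi_1\cdot y_1}$ and $e^{i\xi_2\cdot y_2}$ correspond on the physical side to translates of $f_1$ and $f_2$, so the bracketed inner integral equals, up to a fixed constant, $\int_{\mathbb{R}^3} f_1(x-y_1)\, f_2(x-y_2)\, f_3(x)\, dx$ by the standard trilinear Parseval identity.

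Next I would take absolute values inside $dy_1 dy_2$ and apply Hölder's inequality in $x$ with exponents $p_1,p_2,p_3$ satisfying $1/p_1+1/p_2+1/p_3 = 1$. Since translation is an isometry on each $L^{p_j}$, the $y_j$-dependence disappears from the norms, leaving
\begin{equation*}
c\,\|K\|_{L^1(\mathbb{R}^6)}\,\|f_1\|_{L^{p_1}}\|f_2\|_{L^{p_2}}\|f_3\|_{L^{p_3}},
\end{equation*}
which is exactly \eqref{ener62}. The second estimate \eqref{ener62.1} follows by $L^q$--$L^{q'}$ duality: testing the candidate function against an arbitrary $f_1 \in L^{q'}$ of norm one, moving the inverse Fourier transform onto $f_1$ by Plancherel, and using that $1/p_2+1/p_3 = 1/q$ is equivalent to $1/q'+1/p_2+1/p_3 = 1$, puts the problem in the exact form of \eqref{ener62}.

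There is no serious obstacle here; the only mild care is in tracking the Fourier normalization constant and in the Fubini/density justification, both of which are routine given that $M$ is continuous with compact support. The argument is a standard device for turning $L^1$ control of the kernel of a multiplier into $L^{p_1}\times L^{p_2}\times L^{p_3}$ Hölder-type bounds, and is used repeatedly in the sequel in combination with the integration-by-parts Lemmas \ref{tech5} and \ref{RotIBP} to estimate the inverse Fourier transforms of cutoff symbols.
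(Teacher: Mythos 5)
Your argument is correct, and it is the standard one. The paper itself gives no proof of this lemma---it is stated as a ``simple lemma'' and treated as known---so there is no paper proof to compare against; but the route you take (write $M = \mathcal{F}K$ with $K = \mathcal{F}^{-1}M \in L^1$, use Fubini and the trilinear Parseval identity to recognize the inner $\xi$-integral as $\int_{\mathbb{R}^3} f_1(x-y_1) f_2(x-y_2) f_3(x)\,dx$, apply H\"older in $x$ plus translation invariance of the $L^{p_j}$ norms, and then integrate against $|K|$) is exactly the intended argument, and the $L^q$--$L^{q'}$ duality step for \eqref{ener62.1} is the correct reduction to \eqref{ener62}. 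The density/Fubini justification via $M$ continuous with compact support is adequate, and the bookkeeping with the Fourier normalization constant and conjugates in the duality step is routine, as you note.
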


Our next lemma, which is also used to bound bilinear operators, shows that localization with respect to the phase is often a bounded operation. See \cite[Lemma 3.10]{DeIoPa} for the proof.

\begin{lemma}\label{PhiLocLem}
Let $s\in[2^{m}-1,2^m]$, $m\geq 0$, and $(1+\beta/20)p\geq -m$. With $\Lambda_0=0$ let\footnote{Notice that this is a slightly larger class of phases than those defined in section \ref{prelims}, i.e. it includes the contributions of the vorticity variables (corresponding to $\mu=0$ or $\nu=0$).}
\begin{equation}\label{zax1}
\Phi(\xi,\eta)=\Phi_{\sigma\mu\nu}(\xi,\eta)=\Lambda_\sigma(\xi)-\Lambda_\mu(\xi-\eta)-\Lambda_\nu(\eta),\qquad \sigma\in\mathcal{P},\,\mu,\nu\in\mathcal{P}\cup\{0\}.
\end{equation}
 Assume that $1/2=1/q+1/r$, $\chi$ is a Schwartz function, and $\|\mathcal{F}^{-1}(n)\|_{L^1(\mathbb{R}^3\times\mathbb{R}^3)}\leq 1$. Then
\begin{equation*}
\begin{split}
\Big\Vert \varphi_{\leq 10m}(\xi)\int_{\mathbb{R}^3}e^{is\Phi(\xi,\eta)}&\chi(2^{-p}\Phi(\xi,\eta))n(\xi,\eta)\widehat{f}(\xi-\eta)\widehat{g}(\eta)d\eta\Big\Vert_{L^2_\xi}\\
&\lesssim\sup_{t\in[s/10,10s]}\Vert e^{-it\Lambda_\mu}f\Vert_{L^q}\Vert e^{-it\Lambda_\nu}g\Vert_{L^r}+2^{-10m}\Vert f\Vert_{L^2}\Vert g\Vert_{L^2},
\end{split}
\end{equation*}
where the constant in the inequality only depends on the function $\chi$.
\end{lemma}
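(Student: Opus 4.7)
The plan is to remove the phase cutoff $\chi(2^{-p}\Phi(\xi,\eta))$ by inverting the Fourier transform of $\chi$: the resulting pure oscillatory factor $e^{i\tau 2^{-p}\Phi}$ merely shifts the time variable $s$ to $s+2^{-p}\tau$, after which Lemma~\ref{L1easy} applies directly with no phase localization.

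\textbf{Step 1: Fourier inversion of $\chi$.} Writing $\chi(z)=\int_{\mathbb{R}}\widehat{\chi}(\tau)e^{i\tau z}\,d\tau$ and applying Fubini (justified by Schwartz decay of $\widehat\chi$), the left-hand side of the claimed inequality equals $\int_{\mathbb{R}}\widehat{\chi}(\tau)\,\varphi_{\leq 10m}(\xi)T_{t_\tau}(\xi)\,d\tau$, where $t_\tau:=s+2^{-p}\tau$ and
\begin{equation*}
T_t(\xi):=\int_{\mathbb{R}^3}e^{it\Phi(\xi,\eta)}n(\xi,\eta)\widehat{f}(\xi-\eta)\widehat{g}(\eta)\,d\eta.
\end{equation*}
Factoring $e^{it\Phi(\xi,\eta)}=e^{it\Lambda_\sigma(\xi)}e^{-it\Lambda_\mu(\xi-\eta)}e^{-it\Lambda_\nu(\eta)}$ and invoking Lemma~\ref{L1easy} (together with the hypothesis $\|\mathcal{F}^{-1}n\|_{L^1}\leq 1$ and $|e^{it\Lambda_\sigma(\xi)}|=1$) gives the uniform-in-$t$ bound
\begin{equation}\label{bd1}
\|T_t\|_{L^2_\xi}\lesssim \|e^{-it\Lambda_\mu}f\|_{L^q}\|e^{-it\Lambda_\nu}g\|_{L^r}.
\end{equation}

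\textbf{Step 2: Bulk contribution.} Restrict to $|\tau|\leq s\cdot 2^p/10$; on this range $t_\tau\in[9s/10,11s/10]\subset[s/10,10s]$. Combining \eqref{bd1} with $\int_{\mathbb{R}}|\widehat{\chi}(\tau)|\,d\tau\lesssim 1$ produces exactly the principal term of the conclusion, namely $\sup_{t\in[s/10,10s]}\|e^{-it\Lambda_\mu}f\|_{L^q}\|e^{-it\Lambda_\nu}g\|_{L^r}$.

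\textbf{Step 3: Tail contribution.} For $|\tau|>s\cdot 2^p/10$ use the Schwartz decay $|\widehat\chi(\tau)|\lesssim_{M}(1+|\tau|)^{-M}$ for $M$ as large as desired, together with a crude polynomial estimate $\|\varphi_{\leq 10m}(\xi)T_t(\xi)\|_{L^2_\xi}\lesssim 2^{Cm}\|f\|_{L^2}\|g\|_{L^2}$ for some fixed $C$ (obtained, for instance, by applying \eqref{bd1} with $(q,r)=(4,4)$ and Bernstein's inequality on the $\xi$-support $\{|\xi|\lesssim 2^{10m}\}$, which in applications propagates to $f,g$ via the compact frequency support inherited from $n$). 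The hypothesis $(1+\beta/20)p\geq -m$ combined with $s\sim 2^m$ yields $s\cdot 2^p\gtrsim 2^{m\beta/(20+\beta)}$, so
\begin{equation*}
\int_{|\tau|>s\cdot 2^p/10}|\widehat{\chi}(\tau)|\,d\tau\lesssim_M 2^{-(M-1)m\beta/(20+\beta)}.
\end{equation*}
Picking $M$ large enough so that $(M-1)\beta/(20+\beta)\geq C+10$ produces a tail contribution of size $\lesssim 2^{-10m}\|f\|_{L^2}\|g\|_{L^2}$, which is the error term in the conclusion.

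The main point of care is Step~3, where a crude polynomial-in-$2^m$ bound on $T_t$ must be absorbed by the Schwartz decay of $\widehat\chi$; this succeeds precisely because the hypothesis $(1+\beta/20)p\geq -m$ guarantees $s\cdot 2^p$ is at least a positive power of $2^m$, so arbitrarily fast decay in $\tau$ beats any fixed polynomial growth. Every other step is a routine substitution.
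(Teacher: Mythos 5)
Your overall plan — Fourier-invert $\chi$ to turn the phase cutoff into a time shift, treat $|\tau|\le s\,2^p/10$ as the bulk where the shifted time stays in $[s/10,10s]$, and beat the tail with rapid decay of $\widehat\chi$ against a crude polynomial-in-$2^m$ bound — is sound and is the expected mechanism for this type of lemma (the paper itself only cites \cite[Lemma 3.10]{DeIoPa} rather than proving it). Steps 1 and 2 are correct as written: the factorization $e^{it\Phi}=e^{it\Lambda_\sigma(\xi)}e^{-it\Lambda_\mu(\xi-\eta)}e^{-it\Lambda_\nu(\eta)}$ together with Lemma~\ref{L1easy} gives the uniform-in-$t$ bound $\Vert T_t\Vert_{L^2_\xi}\lesssim\Vert e^{-it\Lambda_\mu}f\Vert_{L^q}\Vert e^{-it\Lambda_\nu}g\Vert_{L^r}$, and for $|\tau|\leq s\,2^p/10$ the shifted time $t_\tau$ lands in $[9s/10,11s/10]\subset[s/10,10s]$.

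There is, however, a genuine gap in Step 3. The crude bound you need is true, but the mechanism you propose to prove it — use exponents $(q,r)=(4,4)$ and then Bernstein, claiming $f$ and $g$ ``inherit compact frequency support from $n$'' — does not work under the hypotheses of the lemma: the only assumption on $n$ is $\Vert\mathcal F^{-1}n\Vert_{L^1}\leq 1$, which gives $\Vert n\Vert_{L^\infty}\leq 1$ but imposes no support restriction on $n$, and even compactly supported $n$ would not localize the frequencies of $f$ or $g$. So there is no way to pass from $\Vert e^{-it\Lambda_\mu}f\Vert_{L^4}$ to $\Vert f\Vert_{L^2}$ along this route. The correct way to obtain the crude bound uses the cutoff $\varphi_{\le 10m}(\xi)$ instead of any compactness of $f,g$: writing $\widehat H(\xi)=\int n(\xi,\eta)\,\widehat{(e^{-it\Lambda_\mu}f)}(\xi-\eta)\,\widehat{(e^{-it\Lambda_\nu}g)}(\eta)\,d\eta$, so that $T_t(\xi)=e^{it\Lambda_\sigma(\xi)}\widehat H(\xi)$, the $L^2\times L^2\to L^1$ case of \eqref{ener62.1} (i.e. $(q,p_2,p_3)=(1,2,2)$) gives $\Vert H\Vert_{L^1}\lesssim\Vert\mathcal F^{-1}n\Vert_{L^1}\Vert f\Vert_{L^2}\Vert g\Vert_{L^2}\leq\Vert f\Vert_{L^2}\Vert g\Vert_{L^2}$, hence $\Vert\widehat H\Vert_{L^\infty}\lesssim\Vert f\Vert_{L^2}\Vert g\Vert_{L^2}$, and therefore $\Vert\varphi_{\leq 10m}\,T_t\Vert_{L^2_\xi}=\Vert\varphi_{\leq 10m}\,\widehat H\Vert_{L^2}\leq\Vert\varphi_{\leq 10m}\Vert_{L^2}\Vert\widehat H\Vert_{L^\infty}\lesssim 2^{15m}\Vert f\Vert_{L^2}\Vert g\Vert_{L^2}$. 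With this fix (so $C=15$) your Step 3 closes exactly as you describe and the rest of the argument is correct.
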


The nonlinearities in the dispersive system \eqref{KG} and the elliptic changes of variables \eqref{Alx11} and \eqref{Alx17} involve the Riesz transform. It is useful to note that our main spaces are stable with respect to the action of singular integrals. More precisely, for integers $n\geq 1$ let
\begin{equation}\label{symb}
\mathcal{S}^{n}:=\{q:\mathbb{R}^3\to\mathbb{C}:\|q\|_{\mathcal{S}^{n}}:=\sup_{\xi\in\mathbb{R}^3\setminus\{0\}}\sup_{|\rho|\leq n}|\xi|^{|\rho|}|D^\rho_\xi q(\xi)|<\infty\},
\end{equation}
denote classes of symbols satisfying differential inequalities of the H\"{o}rmander--Michlin type.

\begin{lemma}\label{tech3}
Assume that $\widehat{Q f}(\xi)=q(\xi)\cdot \widehat{f}(\xi)$ for some $q\in\mathcal{S}^{10}$. Then
\begin{equation}\label{compat}
\begin{split}
\|Qf\|_{Z_1^\sigma}&\lesssim \|f\|_{Z_1^\sigma},\qquad\text{ for any }\sigma\in\{e,b\} \text{ and }f\in Z_1^\sigma,\\
\|(1+|x|^2)^{1/4}Qf\|_{L^2}&\lesssim \|(1+|x|^2)^{1/4}f\|_{L^2}.
\end{split}
\end{equation}
\end{lemma}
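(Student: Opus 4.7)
Both estimates reduce to how the Fourier multiplier $Q$ interacts with the spatial and frequency localizations that build the $Z_1^\sigma$ and weighted norms.

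The weighted $L^2$ bound $\|(1+|x|^2)^{1/4}Qf\|_{L^2}\lesssim \|(1+|x|^2)^{1/4}f\|_{L^2}$ is classical: since $q\in\mathcal{S}^{10}$ satisfies the H\"ormander–Michlin condition, $Q$ is a Calder\'on–Zygmund operator, and the weight $w(x)=(1+|x|^2)^{1/2}$ (behaving like $|x|$ at infinity) lies in the Muckenhoupt class $A_2(\mathbb{R}^3)$, since the power weight $|x|^a$ belongs to $A_2(\mathbb{R}^3)$ iff $-3<a<3$ and $a=1$ qualifies. Hence $Q$ is bounded on $L^2(w\,dx)$, which is precisely the desired inequality.

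For the $Z_1^\sigma$ estimate, fix $(k,j)\in\mathcal{J}$ and $n\in[0,j+1]$. Since $Q$, $P_k$, and $A_{n,(j)}^\sigma$ are Fourier multipliers that pairwise commute, the sole obstacle is the spatial cutoff $\varphi_j^{(k)}$ in $Q_{jk}$. Using the partition $\sum_{j'\ge \max(-k,0)}\varphi_{j'}^{(k)}=1$, expand
\[
A_{n,(j)}^\sigma Q_{jk}(Qf)\;=\;\sum_{j'}A_{n,(j)}^\sigma\,\varphi_j^{(k)}\,(QP_k\,Q_{j'k}f),
\]
and further decompose $Q_{j'k}f=\sum_{n'=0}^{j'+1}A_{n',(j')}^\sigma Q_{j'k}f$, which yields the defining input
\[
\|A_{n',(j')}^\sigma Q_{j'k}f\|_{L^2}\le 2^{-(1+\beta)j'+4\beta n'}\|f\|_{Z_1^\sigma}.
\]
Since $A_{n',(j')}^\sigma$ commutes with $QP_k$, all Fourier multipliers can be moved freely past each other, isolating $\varphi_j^{(k)}$ as the only nontrivial piece to control.

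The double sum over $(j',n')$ is then organized into a diagonal regime $|j-j'|\le 5,\ |n-n'|\le 2$ (where the weights $2^{(1+\beta)j-4\beta n}$ and $2^{(1+\beta)j'-4\beta n'}$ are comparable) and complementary off-diagonal regimes. On the diagonal, the composed operator is uniformly $L^2$-bounded and the desired bound is immediate. In the spatially off-diagonal regime $|j-j'|>5$, pairs $(x,y)$ with $x\in\mathrm{supp}(\varphi_j^{(k)})$ and $y\in\mathrm{supp}(Q_{j'k}f)$ are separated by $|x-y|\gtrsim 2^{\max(j,j')}$, and the kernel bound $|K_k(z)|\lesssim 2^{3k}(1+2^k|z|)^{-10}$ for $QP_k$ applied via Young's inequality on the far-field portion yields a factor $\lesssim 2^{-7(\max(j,j')+k)}$, easily summable in $j'$. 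In the frequency off-diagonal regime $|n-n'|>2$, one exploits almost-orthogonality of $A_{n,(j)}^\sigma$ and $A_{n',(j')}^\sigma$: the spatial cutoff $\varphi_j^{(k)}$ smears Fourier supports by only $O(2^{-j})$ while the resonant annuli $\mathrm{supp}\,\varphi_{-n}(\Psi_\sigma^\dagger)$ and $\mathrm{supp}\,\varphi_{-n'}(\Psi_\sigma^\dagger)$ are separated by $\gtrsim 2^{-\min(n,n')}$, producing decay whenever $\min(n,n')<j$. The main technical obstacle is the careful bookkeeping at the endpoints $n\in\{0,j+1\}$ (and likewise $n'$), where $A_{n,(j)}^\sigma$ collapses to a sum over the entire region outside or inside the last annulus; here the built-in constraint $n\le j+1$ is essential to prevent the spatial cutoff scale $2^{-j}$ from dominating the annular Fourier width $2^{-n}$, which would otherwise destroy the near-orthogonality.
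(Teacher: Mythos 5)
Your overall strategy---commute $Q$ through the Fourier-multiplier pieces $P_k$ and $A_{n,(j)}^\sigma$, reduce the issue to the spatial cutoff $\varphi_j^{(k)}$, decompose the input over $(j',n')$, and split into diagonal and off-diagonal regimes---is the correct one and matches the argument the paper points to in \cite{IoPa2}. The $A_2$-weight route for the second estimate is valid, though the cited source would more likely use a direct kernel or commutator computation.

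There is, however, a real gap in the spatially off-diagonal regime $|j-j'|>5$, where you assert the kernel factor $2^{-7(\max(j,j')+k)}$ makes the sum ``easily summable in $j'$.'' Take $n=0$, $k<0$, and $j'<j-5$, so that necessarily $j'\ge -k$ and $j+k\ge 6$. Summing the input over $n'$ only yields $\|Q_{j'k}f\|_{L^2}\lesssim 2^{-(1-3\beta)j'}\|f\|_{Z_1^\sigma}$ (the $n'\approx j'$ contributions cost a factor $2^{4\beta j'}$). The $j'$-sum is then $\lesssim 2^{-7(j+k)}2^{(1-3\beta)k}\|f\|_{Z_1^\sigma}$, and comparing with the required $2^{-(1+\beta)j}\|f\|_{Z_1^\sigma}$ leaves a ratio of order $2^{-(6-\beta)(j+k)-4\beta k}$; since the constraint $(k,j)\in\mathcal{J}$ only forces $j+k\ge 0$, with $j+k=6$ and $|k|\approx j$ this is $\sim 2^{4\beta j-36}$, which is unbounded in $j$. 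The missing observation---stated after \eqref{Alx100.5} and implicit in \eqref{cas2}---is that $A_{n'}^\sigma$ is nontrivial on frequencies $|\xi|\sim 2^k$ with $n'\ge 1$ only when $\sigma=b$ and $2^k\approx 1$. Outside a bounded range of $k$, only $n'=0$ survives, giving $\|Q_{j'k}f\|_{L^2}\lesssim 2^{-(1+\beta)j'}\|f\|_{Z_1^\sigma}$ without that loss, and the sum then closes with margin $2^{-(6-\beta)(j+k)}$. For $k$ in the remaining bounded range, $2^{-7(k+j)}\lesssim 2^{-7j}$ absorbs everything. This dichotomy needs to be built into the off-diagonal estimates (also when tallying the $n'$ sum in the frequency off-diagonal regime); as written the argument does not close uniformly in $(k,j)$.
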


See \cite[Lemma 5.1]{IoPa2} for a similar proof.

\subsubsection{The phase functions} We collect now several properties of the phase functions $\Phi=\Phi_{\sigma\mu\nu}$. In this subsection we assume that $\sigma,\mu,\nu\in\{e,b,-e,-b\}$ (so $\mu\neq 0$, $\nu\neq 0$). We start with a suitable description of the geometry of resonant sets. See \cite[Proposition 8.2 and Remark 8.4]{DeIoPa} for proofs; the arguments provided in \cite{DeIoPa} are in two dimensions, but they extend with no difficulty to three dimensions.

\begin{proposition}(Structure of resonance sets)\label{spaceres} The following claims hold:

(i) If either $\nu+\mu=0$ or $\max(|\xi|,|\eta|,|\xi-\eta|)\geq 2^{\D_0}$ or $\min(|\xi|,|\eta|,|\xi-\eta|)\leq 2^{-\D_0}$ then
\begin{equation}\label{res00}|\Phi(\xi,\eta)|\gtrsim (1+|\xi|+|\eta|)^{-1}\quad\mathrm{or}\quad|\nabla_{\eta}\Phi(\xi,\eta)|\gtrsim (1+|\xi|+|\eta|)^{-3}.
\end{equation}

(ii) If $\nu+\mu\neq 0$, then there exists a function $p=p_{\mu\nu}:\mathbb{R}^{2}\to\mathbb{R}^{2}$ such that $|p(\xi)|\lesssim|\xi|$ and $|p(\xi)|\approx |\xi|$ for small $\xi$, and \[\nabla_{\eta}\Phi(\xi,\eta)=0\quad\Leftrightarrow\quad \eta=p(\xi).\] There is an odd smooth function $p_{+}:\mathbb{R}\to\mathbb{R}$, such that $p(\xi)=p_{+}(|\xi|)\xi/|\xi|$. Moreover
\begin{equation}\label{cas10}
\text{ if }\quad|\eta|+|\xi-\eta|\leq U\in[1,\infty)\quad\text{ and }\quad|\nabla_{\eta}\Phi(\xi,\eta)|\leq\varep\quad\text{ then }\quad|\eta-p(\xi)|\lesssim\varep U^4.
\end{equation}
and, for any $s\in\mathbb{R}$,
\begin{equation}\label{cas10.1}
|D^\alpha p_+(s)|\lesssim_{\alpha} 1,\qquad |p'_+(s)|\gtrsim (1+|s|)^{-3},\qquad |1-p'_+(s)|\gtrsim (1+|s|)^{-3}.
\end{equation}

(iii) If $\nu+\mu\neq 0$, we define $p$ as above and $\Psi(\xi):=\Phi(\xi,p(\xi))$. Then $\Psi$ is a radial function, and there exist two positive constants $\gamma_{1}<\gamma_{2}$, such that $\Psi(\xi)=0$ if and only if either\[\pm(\sigma,\mu,\nu)=(b,e,e)\quad\mathrm{and}\quad |\xi|=\gamma_{1},\] or \[\pm(\sigma,\mu,\nu)\in\{(b,e,b),(b,b,e)\}\quad\mathrm{and}\quad |\xi|=\gamma_{2}.\]
\end{proposition}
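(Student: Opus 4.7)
The strategy is to exploit rotational symmetry in order to reduce the three-dimensional problem to the two-dimensional setting already treated in \cite[Proposition~8.2 and Remark~8.4]{DeIoPa}. Since each $\Lambda_\kappa(\zeta)=\pm\sqrt{1+c_\kappa|\zeta|^2}$ with $c_\kappa\in\{d,1\}$ depends only on $|\zeta|$, the phase $\Phi_{\sigma\mu\nu}(\xi,\eta)$ is a function of the rotation invariants $(|\xi|,|\eta|,\xi\cdot\eta)$ alone. After rotating so that $\xi$ points along the first coordinate axis, the stationary condition reads
\begin{equation*}
\nabla_\eta\Phi=\nabla\Lambda_\mu(\xi-\eta)-\nabla\Lambda_\nu(\eta)=0,\qquad \nabla\Lambda_\kappa(\zeta)=\mathrm{sgn}(\kappa)\frac{c_\kappa\,\zeta}{\sqrt{1+c_\kappa|\zeta|^2}}.
\end{equation*}
Projecting onto the hyperplane orthogonal to $\xi$ and using injectivity of $r\mapsto c_\kappa r/\sqrt{1+c_\kappa r^2}$ forces the orthogonal component of $\eta$ to vanish, so every critical point lies on the line $\mathbb{R}\,\xi/|\xi|$. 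This simultaneously produces the radial form $p(\xi)=p_+(|\xi|)\xi/|\xi|$ and radiality of $\Psi$.

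\textbf{Parts (i) and (ii).} For (i), when $\nu+\mu=0$ the reduced scalar equation has roots only at $\xi=0$, so a quantitative expansion yields $|\nabla_\eta\Phi|\gtrsim|\xi|(1+|\xi|+|\eta|)^{-3}$; combined with the identity $\Phi(0,\eta)=\Lambda_\sigma(0)=\pm 1$ (which holds because $\Lambda_\nu(\eta)+\Lambda_\mu(\eta)=0$), this gives $|\Phi|\gtrsim 1$ for small $|\xi|$ and establishes \eqref{res00}. In the extreme-frequency regime $\max(|\xi|,|\eta|,|\xi-\eta|)\geq 2^{\mathcal{D}_0}$ or $\min\leq 2^{-\mathcal{D}_0}$, the same conclusion follows from the asymptotics $\Lambda_\kappa(\zeta)\approx\pm\sqrt{c_\kappa}|\zeta|$ for $|\zeta|\gg 1$ and $\Lambda_\kappa(\zeta)\approx\pm 1+O(|\zeta|^2)$ for $|\zeta|\ll 1$, via a short case check based on which frequency escapes the annulus. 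For (ii), when $\nu+\mu\neq 0$ the reduced scalar equation
\begin{equation*}
\mathrm{sgn}(\mu)\frac{c_\mu(r-s)}{\sqrt{1+c_\mu(r-s)^2}}=\mathrm{sgn}(\nu)\frac{c_\nu\,s}{\sqrt{1+c_\nu s^2}},\qquad r=|\xi|,
\end{equation*}
has strictly monotonic sides intersecting transversally at a unique point, which one checks by inspection of the finite list of allowed sign pairs $(\mu,\nu)$; the implicit function theorem then produces a unique smooth odd solution $p_+$. Differentiating this implicit relation yields $|p|\approx|\xi|$ near $0$, $|p|\lesssim|\xi|$ globally, and \eqref{cas10.1}, the $(1+|s|)^{-3}$ polynomial losses reflecting the flattening of the Klein--Gordon symbols at infinity. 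The stability bound \eqref{cas10} is a weighted inverse-function-theorem argument: on $\{|\eta|+|\xi-\eta|\leq U\}$ the Hessian $\nabla_\eta^2\Phi$ at $\eta=p(\xi)$ is nondegenerate of order $U^{-4}$, which accounts for the factor $U^4$.

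\textbf{Part (iii) and main obstacle.} To locate the zeros of $\Psi$, substitute $\eta=p(\xi)$ into the on-shell identity $\Lambda_\sigma(\xi)=\Lambda_\mu(\xi-\eta)+\Lambda_\nu(\eta)$; together with the stationary equation this reduces $\Psi(\xi)=0$ to a single algebraic equation in $r=|\xi|$, which I then inspect across the sign triples $(\sigma,\mu,\nu)\in\mathcal{P}^3$. All combinations except $\pm(b,e,e)$ and $\pm(b,e,b),\pm(b,b,e)$ are eliminated by sign mismatches in momentum--energy conservation; the former produces $|\xi|=\gamma_1$ and the latter $|\xi|=\gamma_2$. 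Positivity and the strict ordering $\gamma_1<\gamma_2$ follow from $d\in(0,1)$, which separates the two Klein--Gordon group velocities. The main obstacle is precisely this case analysis, where spurious triples must be ruled out and the two genuine resonance radii identified; since exactly this computation is carried out in the two-dimensional setting in \cite[Proposition~8.2 and Remark~8.4]{DeIoPa}, and since the algebra depends only on the scalar function $p_+$ produced above, the argument transfers to three dimensions without change once the reduction to the axis $\mathbb{R}\,\xi/|\xi|$ has been established.
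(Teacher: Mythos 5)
Your proposal takes essentially the same route as the paper: the paper itself supplies no self-contained proof, instead citing \cite[Proposition 8.2 and Remark 8.4]{DeIoPa} and observing that the two-dimensional arguments extend to three dimensions, and your rotational reduction of the stationary-phase set to the axis $\mathbb{R}\,\xi/|\xi|$ is exactly the observation that underwrites that extension. One small caution: the heuristic that the two sides of the reduced scalar equation are strictly monotonic and "intersect transversally at a unique point" does not hold literally for sign pairs such as $(\mu,\nu)=(e,-b)$ with $\mu+\nu\neq 0$ (there both sides are decreasing in $s$ and their difference is not monotone), so uniqueness of $p_+$ for those pairs requires the finer one-dimensional analysis of \cite{DeIoPa} rather than transversality alone — but since you ultimately defer to that reference for the scalar algebra, this does not undermine the approach.
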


\begin{remark}\label{largeres} For $\D_0$ sufficiently large we define the function
\begin{equation}\label{reccl}
\Psi_{\sigma}^{\dagger}(\xi)=2^{\D_0}(1+|\xi|)\inf_{\mu,\nu\in\mathcal{P};\,\nu+\mu\neq 0}\left|\Psi_{\sigma\mu\nu}(\xi)\right|
\end{equation}
as in \eqref{psidag}. We have
\begin{equation}\label{cas2}
\Psi_{\pm b}^{\dagger}(\xi)\approx_d 2^{\D_0}\frac{\min\big(\big||\xi|-\gamma_{1}\big|,\big||\xi|-\gamma_{2}\big|\big)}{1+|\xi|}\qquad\text{ and }\qquad10\leq \Psi_{\pm e}^{\dagger}(\xi)\lesssim 1.
\end{equation}
\end{remark}

Our last lemmas are connected to the application of the Schur's test. See \cite[Lemma 8.7 and Proposition 8.8]{DeIoPa} for the proofs. 

We start with a general upper bound on the size of sublevel sets of functions. 

\begin{lemma}\label{lemma00}
Suppose $L,R,M\in\mathbb{R}$, $M\geq \max(1,L,L/R)$, and $Y:B_R:=\{x\in\mathbb{R}^n:|x|<R\}\to\mathbb{R}$ is a function satisfying $\|\nabla Y\|_{C^{l}(B_R)}\leq M$, for some $l\geq 1$. Then, for any $\epsilon>0$,
\begin{equation}\label{scale1}
\big|\big\{x\in B_R:|Y(x)|\leq\epsilon\text{ and }\sum_{|\alpha|\leq l}|\partial_{x}^{\alpha}Y(x)|\geq L\big\}\big|\lesssim R^{n}ML^{-1-1/l}\epsilon^{1/l}.
\end{equation} 
Moreover, if $n=l=1$, $K$ is a union of at most $A$ intervals, and $|Y'(x)|\geq L$ on K, then
\begin{equation}\label{scale2}\left|\{x\in K:|Y(x)|\leq\epsilon\}\right|\lesssim AL^{-1}\epsilon.\end{equation}
\end{lemma}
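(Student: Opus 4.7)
The plan is the classical sublevel-set argument: cover $B_R$ with small cubes on which at least one pure directional derivative of $Y$ of order $\le l$ is uniformly bounded below, then slice each cube by parallel lines and apply a one-dimensional sublevel estimate.

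The estimate \eqref{scale2} is essentially free: on each of the $\le A$ connected components of $K$, the continuous function $Y'$ has $|Y'|\ge L>0$ and therefore constant sign, so $Y$ is strictly monotone there with slope of magnitude $\ge L$; hence $\{|Y|\le\epsilon\}$ meets each such component in a set of length at most $2\epsilon/L$, and summation over components yields \eqref{scale2}.

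For \eqref{scale1} I may assume $\epsilon\le L$, since otherwise $M\ge\max(1,L)$ makes the right-hand side dominate the trivial bound $|B_R|\lesssim R^n$. Fix a scale $\delta:=cL/M$ with $c=c(n,l)$ small to be chosen, and partition $B_R$ into $O((RM/L)^n)$ axis-parallel cubes $Q_j$ of side $\delta$; the assumption $M\ge L/R$ guarantees $\delta\le R$. Writing $E$ for the set on the left of \eqref{scale1}, suppose $Q_j\cap E\ne\emptyset$ and pick $x_0\in Q_j\cap E$. By pigeonhole there exists a multi-index $\alpha^*$ of order $k_j\in\{1,\dots,l\}$ with $|\partial^{\alpha^*}Y(x_0)|\gtrsim L$ (the order-zero case is excluded by $|Y(x_0)|\le\epsilon<L$). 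The polarization identity for symmetric multilinear forms writes $\partial^{\alpha^*}$ as a finite linear combination of pure directional derivatives of order $k_j$, and therefore produces a unit vector $e=e_j$ with $|\partial_e^{k_j}Y(x_0)|\gtrsim L$. Since $\|\nabla Y\|_{C^l(B_R)}\le M$ bounds $|\partial_e^{k_j+1}Y|$ by $M$ throughout $B_R$, choosing $c$ small enough forces $|\partial_e^{k_j}Y|\gtrsim L$ uniformly on all of $Q_j$.

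Foliate $Q_j$ by lines parallel to $e_j$, each meeting $Q_j$ in a segment of length $\lesssim\delta$. On such a segment, the standard one-dimensional sublevel lemma (Christ's inductive estimate: $|f^{(k)}|\ge\lambda$ on an interval implies $|\{|f|\le\epsilon\}|\lesssim_k(\epsilon/\lambda)^{1/k}$) yields
\[
|\{t:|Y(x'+te_j)|\le\epsilon\}|\lesssim(\epsilon/L)^{1/k_j}\le(\epsilon/L)^{1/l},
\]
where the last inequality uses $\epsilon\le L$ and $k_j\le l$. Fubini on the $(n-1)$-dimensional transverse slice then gives $|E\cap Q_j|\lesssim\delta^{n-1}(\epsilon/L)^{1/l}$, and summing over the $O((RM/L)^n)$ cubes produces
\[
|E|\lesssim (RM/L)^n\cdot\delta^{n-1}\cdot(\epsilon/L)^{1/l}\sim R^n(M/L)(\epsilon/L)^{1/l}=R^nML^{-1-1/l}\epsilon^{1/l},
\]
which is \eqref{scale1}. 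The only delicate step is the passage from a mixed partial derivative being large at the single point $x_0$ to a pure directional derivative being large on the whole cube $Q_j$: polarization handles the directional aspect, while the extra derivative control supplied by $\|\nabla Y\|_{C^l}\le M$ forces the optimal cube size $\delta\sim L/M$, which is precisely what produces the factor $R^nM$ in the final bound. The 1D estimate is the only genuinely nontrivial input and is completely standard.
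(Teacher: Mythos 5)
Your proof is correct, and it follows what is almost certainly the same route as the reference the paper cites (the paper does not reproduce the argument but defers to [Lemma 8.7, DeIoPa]): cover $B_R$ by cubes of side $\sim L/M$, use the polarization identity to upgrade a large mixed partial of some order $k\in\{1,\dots,l\}$ at a point into a large \emph{pure} directional derivative $\partial_e^k Y$ on the whole cube, and then apply the one-dimensional van der Corput/Christ sublevel estimate $|\{|f|\le\epsilon\}|\lesssim_k(\epsilon/\lambda)^{1/k}$ along lines in direction $e$, with Fubini over the transverse slice; the bookkeeping $(RM/L)^n\cdot\delta^{n-1}\cdot(\epsilon/L)^{1/l}\sim R^nML^{-1-1/l}\epsilon^{1/l}$ checks out, and the one-dimensional bound \eqref{scale2} is the immediate monotonicity argument you give. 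One small point worth tightening: the pigeonhole that produces a derivative of order in $\{1,\dots,l\}$ of size $\gtrsim L$ requires not just $\epsilon<L$ but $\epsilon\le cL$ for a fixed $c$ (say $c=1/2$), so that the zero-order term $|Y(x_0)|$ does not absorb the whole sum; the regime $cL\le\epsilon\le L$ is then covered by the trivial bound $|E|\le|B_R|\lesssim R^n$ exactly as in your $\epsilon\ge L$ reduction, so this is a harmless adjustment of the threshold and of implied constants.
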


As a consequence, we have precise bounds on the sublevel sets of our phase functions:

\begin{lemma}\label{Shur2Lem} Assume that $R\geq 1$, $k\geq 0$, and $\epsilon\leq 1/2$. Let
\begin{equation*}E=\{(\xi,\eta):\max(|\xi|,|\eta|)\leq 2^{k},\,|\xi-\eta|\leq R,|\Phi(\xi,\eta)|\leq 2^{-k}\epsilon\}.
\end{equation*}
Then
\begin{equation}\label{cas4} \sup_{\xi}\int_{\mathbb{R}^{3}}\mathbf{1}_{E}(\xi,\eta)\,d\eta+\sup_{\eta}\int_{\mathbb{R}^{3}}\mathbf{1}_{E}(\xi,\eta)\,d\xi\lesssim 2^{5k}R^3\epsilon\log(1/\epsilon).
\end{equation}
\end{lemma}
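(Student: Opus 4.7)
The plan is to bound $\sup_\xi \int \mathbf{1}_E(\xi,\eta)\,d\eta$; the other integral is symmetric with roles exchanged. Fix $\xi$ with $|\xi|\leq 2^k$ and set $F(\eta) := \Phi(\xi,\eta)$. Since $\Lambda_\mu(x) = \pm\sqrt{1+c_\mu|x|^2}$ with $c_\mu\in\{d,1\}$, all derivatives of $F$ up through order $3$ are uniformly bounded on the region of interest. I would attack the estimate by combining Lemma \ref{lemma00} with a dyadic decomposition in the size of $|\nabla_\eta F|$.

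For each integer $m\leq 1$ let $E_m := \{\eta : |\nabla_\eta F(\eta)|\in[2^m,2^{m+1})\}$. Applied with $n=3$, $l=1$, $L=2^m$, $M\lesssim 1$, and the sublevel threshold $2^{-k}\epsilon$, Lemma \ref{lemma00} yields
\[
|E\cap E_m\cap\{|\xi-\eta|\leq R\}|\lesssim R^3\cdot 2^{-2m}\cdot 2^{-k}\epsilon.
\]
Summing this geometric series over $m\in[-3k,1]$ (a sum dominated by the smallest~$m$) contributes $\lesssim 2^{5k}R^3\epsilon$. The residual degenerate regime $\{|\nabla_\eta F|<2^{-3k}\}$ is handled using Proposition \ref{spaceres}: when $\mu+\nu=0$ or the frequencies are extremal, the gradient has a positive lower bound and this set is empty for $k$ in the relevant range; otherwise the set is a small neighborhood of the unique critical point $p(\xi)$, near which the Hessian of $F$ has eigenvalues of size $\gtrsim 2^{-3k}$. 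A change of variables $v = \eta - p(\xi)$ and the Morse-type expansion $F(\eta) \approx \Psi(\xi) + \tfrac{1}{2}v^\top H v$ then shows that the sublevel set near $p(\xi)$ is either a small ball (when $|\Psi(\xi)|\lesssim 2^{-k}\epsilon$) of volume $\lesssim 2^{3k}\epsilon^{3/2}$, or a thin spherical shell of volume $\lesssim 2^{7k/2}\epsilon$ otherwise; for $R\geq 1$ and $\epsilon\leq 1/2$ both are absorbed into $2^{5k}R^3\epsilon\log(1/\epsilon)$.

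The main obstacle I anticipate is establishing the uniform $2^{-3k}$ lower bound on the Hessian of $F$ in the sign-mismatched cases, where $\mathrm{Hess}_\eta F = -\mathrm{Hess}\,\Lambda_\mu(\xi-\eta)-\mathrm{Hess}\,\Lambda_\nu(\eta)$ admits cancellations between the two terms. This is handled by choosing, at each $\eta$ outside a small codimension-$1$ ``cancellation surface'', a direction $\omega$ perpendicular to both $\eta$ and $\xi-\eta$; on such $\omega$ one computes $\omega\cdot \mathrm{Hess}_\eta F\cdot\omega = \mp c_\mu/\Lambda_\mu(\xi-\eta) \pm c_\nu/\Lambda_\nu(\eta)$, which has the required lower bound away from the cancellation surface. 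The bound on the small exceptional set near this surface comes from a further sublevel estimate using the radial derivative of $F$, which is always non-trivial by the explicit structure of $\Lambda$. The logarithmic factor $\log(1/\epsilon)$ in the final bound absorbs any slack in the dyadic balancing between the generic and near-critical regimes; morally, it comes from optimizing the threshold $m_0$ at which one switches from the $l=1$ estimate to the Morse-type analysis. This follows the same structural pattern as \cite[Proposition 8.8]{DeIoPa}, adapted to three dimensions.
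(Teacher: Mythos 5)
Your dyadic decomposition in $|\nabla_\eta F|$ combined with Lemma~\ref{lemma00} with $l=1$ is sound, and the resulting geometric series correctly produces the $2^{5k}R^3\epsilon$ contribution from the region $\{|\nabla_\eta F|\ge 2^{-3k}\}$. However, your treatment of the residual degenerate set near $p(\xi)$ has gaps that, as written, do not close.

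The most serious problem is the reliance on the second-order Morse model $F(\eta)\approx\Psi(\xi)+\tfrac12 v^{\top}Hv$ throughout the degenerate region. Proposition~\ref{spaceres}(ii), applied with $\varep=2^{-3k}$ and $U\lesssim 2^k$, only confines $\{|\nabla_\eta F|<2^{-3k}\}$ to $|\eta-p(\xi)|\lesssim 2^k$; that is not a small neighborhood. Once $|v|\gtrsim 2^{-3k}$, the cubic remainder $O(|v|^3)$ is already comparable to $v^{\top}Hv\sim 2^{-3k}|v|^2$, so the quadratic model is not uniformly accurate, and the ``small ball'' / ``thin spherical shell'' volume counts are not justified on the bulk of the degenerate set. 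You would need either to carry the cubic terms through the estimate or to localize further before invoking the expansion.

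Second, your argument for the claimed eigenvalue lower bound $\gtrsim 2^{-3k}$ of $H=\mathrm{Hess}_\eta F(p(\xi))$ is not a proof. At $p(\xi)$ both $\xi-\eta$ and $\eta$ are collinear with $\xi$, so $H$ is diagonal with a radial and a double tangential eigenvalue, each a difference of positive terms in the sign-mismatched cases. Your proposed fix does not resolve this: near $p(\xi)$ the radial derivative of $F$ vanishes (that is the critical-point equation), so a ``further sublevel estimate using the radial derivative'' cannot control the exceptional set there. What is actually needed is a separate computation showing that the critical-point relation $\lambda'_{|\mu|}(|\xi-p|)=\lambda'_{|\nu|}(|p|)$ is quantitatively incompatible with either eigenvalue of $H$ vanishing; your outline only asserts the conclusion. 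Moreover, in the sign-mismatched cases $H$ may well be indefinite, in which case the sets $\{|v^\top Hv|\le\delta\}$ are neighborhoods of cones/hyperboloids, not balls or spherical shells, and the volume is not $\delta^{3/2}$-type but grows linearly with the outer radius — a distinction your estimate ignores.

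Finally, no step of your sketch actually produces the $\log(1/\epsilon)$: all of your partial bounds are pure $\epsilon$-powers, and attributing the logarithm to ``slack in the dyadic balancing'' is not an argument. In the cited proof in \cite[Proposition~8.8]{DeIoPa} the route is genuinely different and sidesteps these issues entirely: one exploits that for fixed $\xi$ the phase $\Phi$ depends on $\eta$ only through $(|\eta|,|\xi-\eta|)$, changes variables to these two radial quantities (picking up a Jacobian $\sim\rho\sigma/|\xi|$), and applies the one-dimensional sublevel bound in the second half of Lemma~\ref{lemma00} in one radial variable for each fixed value of the other; the $\log(1/\epsilon)$ then arises from summing over the $O(\log(1/\epsilon))$ dyadic scales in the remaining radial variable. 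As it stands, your proposal does not constitute a complete proof.
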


\subsubsection{Linear Estimates}

We prove now several linear estimates. Given a function $f$, $(k,j)\in\mathcal{J}$, and $n\in\{0,\ldots,j+1\}$ (recall the notation in subsection \ref{NotDef}) we define
\begin{equation}\label{Alx100}
f_{j,k}:=P_{[k-2,k+2]}Q_{jk}f,\qquad \widehat{f_{j,k,n}}(\xi):=\varphi_{-n}^{[-j-1,0]}(\Psi^\dagger_\sigma(\xi))\widehat{f_{j,k}}(\xi).
\end{equation}
Notice that $f_{j,k,n}$ is nontrivial only if $n=0$ or ($n\geq 1$, $\sigma=b$, and $2^k\approx 1$). Moreover,
\begin{equation}\label{Alx100.5}
f_{j,k}=\sum_{n\in[0,j+1]}f_{j,k,n},\qquad P_kf=\sum_{j\geq \max(-k,0)}f_{j,k},\qquad f=\sum_{k\in\mathbb{Z}}P_kf.
\end{equation}

\begin{lemma}\label{LinEstLem}
(i) Assume $\sigma\in\{e,b\}$ and
\begin{equation}\label{Zs}
\Vert f\Vert_{Z_1^\sigma}\leq 1.
\end{equation}
If $m\geq 0$ and $|t|\in[2^{m}-1,2^{m+1}]$ then
\begin{equation}\label{LinftyBd}
\| e^{-it\Lambda_\sigma}f_{j,k,n}\|_{L^\infty}\lesssim\min\big(2^{3k/2}2^{-(1+\beta)j}2^{-n/2+4\beta n},2^{5k^+/2}2^{-3m/2}2^{(1/2-\beta)j}2^{4\beta n}\big).
\end{equation}
As a consequence, for any $k\in\Z$ one has
\begin{equation}\label{LinftyBd2}
\Vert e^{-it\Lambda_\sigma}P_kf\Vert_{L^\infty}\lesssim 2^{-(1+\beta)m}2^{(1/2-\b)\,k}
2^{2k^{+}}.
\end{equation}

(ii) Assume $\sigma\in\{e,b\}$, $N\geq 10$, and
\begin{equation}\label{Zs2}
\Vert f\Vert_{Z_1^\sigma}+\Vert f\Vert_{\H^{N}}\leq 1.
\end{equation}
Then, for any $(k,j)\in\mathcal{J}$ and $n\in\{0,\ldots,j+1\}$,
\begin{equation}\label{RadL2}
\big\Vert \sup_{\theta\in\mathbb{S}^2}|\widehat{f_{j,k,n}}(r\theta)|\,\big\Vert_{L^2(r^2dr)}+\big\Vert \sup_{\theta\in\mathbb{S}^2}|f_{j,k,n}(r\theta)|\,\big\Vert_{L^2(r^2dr)}\lesssim 2^{-(1-2/N)((1+\b)j-4\beta n)}\,.
\end{equation}

Also, we have
\begin{equation}\label{FLinftybd}
\|\widehat{f_{j,k,n}}\|_{L^\infty}\lesssim 2^{j/2-k}2^{-(1-2/N)((1+\b)j-4\beta n)},
\end{equation}
\begin{equation}\label{FLinftybdDER}
\|D^{\alpha}\widehat{f_{j,k,n}}\|_{L^\infty}\lesssim_{|\alpha|}
2^{|\alpha|j}2^{j/2-k}2^{-(1-2/N)((1+\b)j-4\beta n)}.
\end{equation}

(iii) For any $f\in\H^2$ we have
\begin{equation}\label{Zs3}
\|f_{j,k}\|_{L^\infty}\lesssim 2^{k/2-j}\|f\|_{\H^2}.
\end{equation}
\end{lemma}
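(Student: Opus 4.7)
All three parts reduce to a handful of standard tools---Bernstein with volume estimates, the Klein-Gordon dispersive bound, Sobolev embedding on $S^2$ and a 1D Sobolev on a radial interval, together with interpolation---combined with the definitions of $B_j^\sigma$ and $\mathcal{H}^N$ and the geometry of the resonant set described in Proposition \ref{spaceres} and Remark \ref{largeres}.

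For part (i), the first estimate in \eqref{LinftyBd} is a volume-based $L^1$ estimate on $\widehat{f_{j,k,n}}$: by Fourier inversion and Cauchy-Schwarz,
\[
\|e^{-it\Lambda_\sigma}f_{j,k,n}\|_{L^\infty}\le\|\widehat{f_{j,k,n}}\|_{L^1}\lesssim|\mathrm{supp}(\widehat{f_{j,k,n}})|^{1/2}\|f_{j,k,n}\|_{L^2},
\]
where the support has volume $\lesssim 2^{3k}\cdot\min(1,2^{-n})$ (the factor $2^{-n}$ arising for $\sigma=b$, $n\ge 1$ since by \eqref{cas2} the support lies in a shell of thickness $\sim 2^{-n}$ around the two resonant spheres, and in this regime $2^k\approx 1$), and $\|f_{j,k,n}\|_{L^2}\lesssim 2^{-(1+\beta)j+4\beta n}$ is read off from the definition \eqref{znorm2}. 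The second bound in \eqref{LinftyBd} uses the standard Klein-Gordon dispersive estimate $\|e^{-it\Lambda_\sigma}P_kg\|_{L^\infty}\lesssim 2^{-3m/2}2^{5k^+/2}\|g\|_{L^1}$ (the $2^{5k^+/2}$ factor coming from the inverse square root of $|\det\mathrm{Hess}\,\Lambda_\sigma|$ at high frequency) together with $\|f_{j,k,n}\|_{L^1}\lesssim 2^{3j/2}\|f_{j,k,n}\|_{L^2}$ from the $Q_{jk}$ localization $|x|\lesssim 2^j$. The bound \eqref{LinftyBd2} then follows by writing $P_kf=\sum_{j,n}f_{j,k,n}$, taking the minimum of the two bounds of \eqref{LinftyBd} for each $(j,n)$, and summing the resulting geometric series; the $\beta$-exponents ensure convergence.

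For part (ii), the estimate \eqref{RadL2} combines the Sobolev embedding on $S^2$,
\[
\sup_\theta|g(r\theta)|^2\lesssim\sum_{|\alpha|\le 2}r^{-2|\alpha|}\|\Omega^\alpha g(r\cdot)\|_{L^2(S^2)}^2,
\]
with interpolation along the rotational-Sobolev scale. Since $\Omega_i$ commutes with the radial Fourier multipliers $P_k$, $A_n^\sigma$ and with the radial physical cutoff defining $Q_{jk}$, we have $\|\Omega^\alpha f_{j,k,n}\|_{L^2}\lesssim\|f\|_{\mathcal{H}^N}\lesssim 1$ for $|\alpha|\le N$, while $\|f_{j,k,n}\|_{L^2}\lesssim 2^{-(1+\beta)j+4\beta n}$. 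Interpolation gives $\|\Omega^\alpha f_{j,k,n}\|_{L^2}\lesssim(2^{-(1+\beta)j+4\beta n})^{1-|\alpha|/N}$, and inserting $|\alpha|=2$ produces the exponent $1-2/N$. Integrating the angular pointwise bound in $r^2\,dr$ gives the estimate for $f_{j,k,n}$ itself; the Fourier version follows identically because $\Omega_i$ commutes with the Fourier transform and Plancherel preserves the $L^2$ norms. The bounds \eqref{FLinftybd} and \eqref{FLinftybdDER} follow from \eqref{RadL2} via a 1D Sobolev inequality $\sup_r|H(r)|^2\lesssim \ell^{-1}\|H\|_{L^2(dr)}^2+\|H\|_{L^2(dr)}\|H'\|_{L^2(dr)}$ on the radial interval $r\sim 2^k$ of length $\ell\sim 2^k$, applied to $H(r):=\sup_\theta|D^\alpha\widehat{f_{j,k,n}}(r\theta)|$; the radial derivative is controlled by applying \eqref{RadL2} to $\partial_{\xi_i}\widehat{f_{j,k,n}}=-i\widehat{x_i f_{j,k,n}}$, which carries an extra factor $2^j$ from the $Q_{jk}$ localization, yielding the $2^{j/2-k}$ factor in \eqref{FLinftybd} and the $2^{|\alpha|j}$ factor in \eqref{FLinftybdDER}.

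Part (iii) is the same two-step structure as \eqref{FLinftybd}, applied to $f_{j,k}$ in physical space on the shell $|x|\sim 2^j$: angular Sobolev gives $\sup_\theta|f_{j,k}(r\theta)|^2\lesssim\sum_{|\alpha|\le 2}2^{-2j|\alpha|}\|\Omega^\alpha f_{j,k}\|_{L^2(S^2)}^2$ (producing the $2^{-j}$ factor via $\Omega^2$), while the 1D Sobolev in $r$ on the shell $r\sim 2^j$ combined with $\|\partial_r f_{j,k}\|_{L^2}\lesssim 2^k\|f_{j,k}\|_{L^2}$ (frequency localization) produces the $2^{k/2}$; using $\|\Omega^\alpha f_{j,k}\|_{L^2}\lesssim\|f\|_{\mathcal{H}^2}$ for $|\alpha|\le 2$ gives the claimed $\|f_{j,k}\|_{L^\infty}\lesssim 2^{k/2-j}\|f\|_{\mathcal{H}^2}$. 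The main delicate point in the whole lemma is the exponent calibration in part (ii): preserving the factor $1-2/N$ (rather than a weaker $1-1/N$) requires using precisely the $H^2(S^2)\hookrightarrow L^\infty(S^2)$ embedding with $|\alpha|=2$, and one must verify that the accompanying factors $r^{-2|\alpha|}\sim 2^{-2j|\alpha|}$ do not degrade the bound in the range $j\ge\max(-k,0)$; beyond this, every step is a routine application of the tools listed above.
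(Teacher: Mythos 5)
Your argument follows the paper's proof step by step: the $L^1$/volume estimate (using Remark \ref{largeres} to bound the thickness of the resonant annulus by $2^{-n}$) for the first half of \eqref{LinftyBd}, the standard 3D Klein--Gordon dispersive bound with the $2^{5k^+/2}$ Hessian factor for the second half, summation over $(j,n)$ for \eqref{LinftyBd2}, and then angular Sobolev on $\mathbb{S}^2$ plus interpolation on the $\Omega$-Sobolev scale for \eqref{RadL2}, followed by a 1D Sobolev in $r$ for \eqref{FLinftybd}--\eqref{FLinftybdDER} and \eqref{Zs3}. All the final estimates you state are correct, and the choice of $|\alpha|=2$ in the interpolation to produce the exponent $1-2/N$ is exactly what the paper does.

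One point needs correcting. Your statement of the angular Sobolev embedding,
$\sup_\theta|g(r\theta)|^2\lesssim\sum_{|\alpha|\le 2}r^{-2|\alpha|}\|\Omega^\alpha g(r\cdot)\|_{L^2(\mathbb{S}^2)}^2$,
carries spurious powers of $r$. The rotation vector fields $\Omega_i=x_a\partial_b-x_b\partial_a$ are dimensionless (scale-invariant), so the embedding $H^2(\mathbb{S}^2)\hookrightarrow L^\infty(\mathbb{S}^2)$ written with the $\Omega$'s is
$\sup_\theta|g(r\theta)|\lesssim\sum_{|\alpha|\le 2}\|(\Omega^\alpha g)(r\cdot)\|_{L^2(\mathbb{S}^2,d\theta)}$,
with no $r$-dependence at all; integrating this in $r^2\,dr$ gives $\big\|\sup_\theta|g(r\theta)|\big\|_{L^2(r^2dr)}\lesssim\sum_{|\alpha|\le2}\|\Omega^\alpha g\|_{L^2(\mathbb{R}^3)}$ cleanly, which is the inequality the paper actually uses (equation \eqref{Zs4}). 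Consequently, in part (iii) the $2^{-j}$ factor is \emph{not} ``produced via $\Omega^2$''; it comes from converting $L^2(r^2dr)$ to $L^2(dr)$ on the physical shell $r\approx 2^j$. Your closing concern about verifying that ``the accompanying factors $r^{-2|\alpha|}\sim 2^{-2j|\alpha|}$ do not degrade the bound'' is therefore moot --- those factors are not there; and note that if you did carry them through consistently in part (iii) you would arrive at the incorrect (overly strong) bound $2^{k/2-3j}$, not the stated $2^{k/2-j}$. This is an error in bookkeeping rather than in the final conclusions, but it should be fixed since it misidentifies where a key power of $2^j$ originates.
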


\begin{proof} (i)  The hypothesis gives
\begin{equation}\label{Alx101}
\Vert f_{j,k,n}\Vert_{L^2}\lesssim 2^{-(1+\b)j+4\beta n}.
\end{equation}
Using the definition,
\begin{equation*}
\Vert e^{-it\Lambda_\sigma}f_{j,k,n}\Vert_{L^\infty}\lesssim \Vert \widehat{f_{j,k,n}}\Vert_{L^1}\lesssim 2^{3k/2}2^{-(1+\beta)j}2^{-n/2+4\beta n}.
\end{equation*}
On the other hand, if $m\geq 10$ then the usual dispersion estimate gives
\begin{equation*}
\Vert e^{-it\Lambda_\sigma}f_{j,k,n}\Vert_{L^\infty}\lesssim 2^{5k^+/2}2^{-3m/2}\Vert f_{j,k,n}\Vert_{L^1}\lesssim 2^{5k^+/2}2^{-3m/2}2^{(1/2-\beta)j}2^{4\beta n}.
\end{equation*}
The bound \eqref{LinftyBd} follows. The bound \eqref{LinftyBd2} follows also, by summation over $j$ and $n$.

(ii) The hypothesis \eqref{Zs2} shows that $\Vert f_{j,k,n}\Vert_{H^N_{\Omega}}\lesssim 1$, where
\begin{equation*}
\|g\|_{H^m_{\Omega}}:=\sum_{\beta_1+\beta_2+\beta_3\leq m}\|\Omega_1^{\beta_1}\Omega_2^{\beta_2}\Omega_3^{\beta_3}g\|_{L^2}.
\end{equation*}
The first inequality in \eqref{RadL2} follows from the interpolation inequality
\begin{equation*}
\|f\|_{H^p_\Omega}\lesssim \Vert f\Vert_{H^N_{\Omega}}^{p/N}\,\|f\|_{2}^{1-p/N},\qquad p\in[0,N]\cap\mathbb{Z},
\end{equation*}
and the Sobolev embedding (along the spheres $\mathbb{S}^2$)
\begin{equation}\label{Zs4}
\begin{split}
\big\Vert \sup_{\theta\in\mathbb{S}^2}|\widehat{f_{j,k,n}}(r\theta)|\,\big\Vert_{L^2(r^2dr)}
&\lesssim \sum_{m_1+m_2+m_3\leq 2}\Vert \Omega_1^{m_1}\Omega_2^{m_2}\Omega_3^{m_3}\widehat{f_{j,k,n}}\Vert_{L^2}\lesssim \Vert \widehat{f_{j,k,n}}\Vert_{H^2_\Omega}.
\end{split}
\end{equation}
The second inequality follows similarly.

To prove \eqref{FLinftybd}, for $\theta\in\mathbb{S}^2$ fixed we estimate
\begin{equation*}
\|\widehat{f_{j,k,n}}(r\theta)\|_{L^\infty_r}\lesssim 2^{j/2}\|\widehat{f_{j,k,n}}(r\theta)\|_{L^2_r}+2^{-j/2}\|(\partial_r\widehat{f_{j,k,n}})(r\theta)\|_{L^2_r}\lesssim 2^{j/2}2^{-k}\|\widehat{f_{j,k,n}}(r\theta)\|_{L^2(r^2dr)},
\end{equation*}
using the localization of the function $Q_{j,k}f$ in the physical space. The desired bounds \eqref{FLinftybd} follow from \eqref{RadL2}. The bounds in \eqref{FLinftybdDER} follow as well, if we notice that derivatives in $\xi$ corresponds to multiplication by $2^j$ factors, due to space localization.

(iii) We may assume $\|f\|_{\H^2}=1$. Using Sobolev embedding in the spheres, as in \eqref{Zs4},
\begin{equation*}
\big\Vert \sup_{\theta\in\mathbb{S}^2}|Q_{j,k}f(r\theta)|\,\big\Vert_{L^2(r^2dr)}
\lesssim 1.
\end{equation*}
The desired estimate follows in the same way as the bound \eqref{FLinftybd}.
\end{proof}

\section{Energy estimates}\label{EneEst}

In this section we prove our main energy estimates. In the rest of the paper we often use the standard Einstein convention that repeated indices are summed. We work in the physical space and divide the proofs into two parts: a high order estimate for the full system (the $\widetilde{\H}^{N_0}$ norm in \eqref{bootstrap3}), and a weighted estimate only for the vorticity components (the estimate \eqref{bootstrapV3}).

\subsection{The total energy of the system}\label{TotalEnergy}

In this subsection we prove the following:
\begin{proposition}\label{BootstrapEE1}
With the hypothesis in Proposition \ref{bootstrap}, we have, for any $t\in[0,T]$,
\begin{equation}\label{bootstrapimp3}
\|(n(t),v(t),E(t),B(t))\|_{\widetilde{\H}^{N_0}}\leq \overline{C}\bar{\eps}/2.
\end{equation}
\end{proposition}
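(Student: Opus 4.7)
For each differential operator $\L\in\V_{N_0}$ I would construct a modified energy functional that extends the conservation law \eqref{EnCons} to higher derivatives, namely
$$\mathcal{E}_{\L}(t) := \int_{\R^3}\Big(d|\L n|^2 + (1+n)|\L v|^2 + |\L E|^2 + |\L B|^2\Big)\,dx.$$
Since the bootstrap bound \eqref{bootstrap2} together with Sobolev embedding forces $\|n\|_{L^\infty}\leq 1/2$, the sum $\sum_{\L\in\V_{N_0}} \mathcal{E}_{\L}(t)$ is comparable to $\|(n,v,E,B)(t)\|_{\widetilde{\H}^{N_0}}^2$. The Euclidean derivatives commute with all operators in the system, and the rotation vector-fields $\Omega_j$ commute with $\operatorname{div}$, $\nabla\times$, $|\nabla|$, $\Lambda_e$, $\Lambda_b$, and satisfy Leibniz rules compatible with cross products and scalar products; thus $\L(n,v,E,B)$ satisfies a system of the same structural form as \eqref{systI}, with forcing consisting of commutator-type bilinear expressions in which at least one factor carries at most $|\L|-1$ derivatives.

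Computing $\frac{d}{dt}\mathcal{E}_{\L}$, the principal top-order contributions cancel: (i) the electromagnetic pairing $\L E\cdot\partial_t\L E + \L B\cdot \partial_t\L B$ eliminates the $\nabla\times$ couplings by antisymmetry; (ii) the quasilinear term $v\cdot\nabla \L v$ in the momentum equation is compensated by the time derivative of the weight $(1+n)$ via the continuity equation $\partial_t n + \operatorname{div}((1+n)v)=0$ (this is precisely the mechanism that makes \eqref{EnCons} conserved). What survives is cubic, of the schematic form
$$\frac{d}{dt}\mathcal{E}_{\L}(t)\lesssim \big\|(v,B,\nabla v,\nabla B)\big\|_{\W^{\lfloor N_0/2\rfloor+O(1),\infty}}\,\mathcal{E}_{\L}(t),$$
so only low-derivative $L^\infty$ control of $v$ and $B$ is needed — comfortably below the level $N_1 = N_0/2+2$. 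Using the decomposition \eqref{Alx17}, $v=-RF+R\times G$ and $B=Y+|\nabla|G$ with $F=\Im U_e$ and $G=\Lambda_b^{-1}\Im U_b-\Lambda_b^{-2}|\nabla|Y$, I split each low-order derivative of $v,B$ into a dispersive part (in $U_e,U_b$ through $\mathcal{S}^{10}$ multipliers, handled by Lemma \ref{tech3}) and a transport part (in $Y$ only). The dispersive part satisfies, by Lemma \ref{LinEstLem}(i) applied to $D^\alpha\L' V_\sigma$ with $|\alpha|+|\L'|\leq N_1$ and summation over Littlewood--Paley scales, the decay bound $\lesssim \bar\eps\,(1+t)^{-1-\beta}$; the transport part is bounded uniformly by $\lesssim\delta_0$ using Sobolev embedding $\H^{N_1}\hookrightarrow L^\infty$ and the bootstrap \eqref{bootstrapV2}.

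Inserting these two estimates yields the Gronwall-type inequality
$$\frac{d}{dt}\mathcal{E}_{\L}(t)\lesssim \Big(\bar\eps\,(1+t)^{-1-\beta}+\delta_0\Big)\mathcal{E}_{\L}(t),$$
and integrating over $[0,T]$ with $T\leq \bar\eps/\delta_0$ the factor $\delta_0\cdot T\leq \bar\eps$ ensures that the exponential multiplier is bounded by $\exp(C\bar\eps)\leq 1+O(\bar\eps)$. Combined with the initial bound $\mathcal{E}_{\L}(0)\lesssim\bar\eps^2$ from \eqref{bootstrap1} and summed over $\L\in\V_{N_0}$, this produces the improved estimate $\|(n,v,E,B)(t)\|_{\widetilde{\H}^{N_0}}\leq \overline{C}\bar\eps/2$ provided $\overline{C}$ is fixed sufficiently large and $\bar\eps$ sufficiently small relative to $1/\overline{C}$.

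\textbf{Main obstacle.} The genuinely delicate point is to avoid any loss of derivatives in the top-order quasilinear term $v\cdot\nabla v$: once $\L$ with $|\L|=N_0$ is applied, the resulting contribution $v\cdot\nabla\L v$ cannot be estimated by $L^\infty$ smallness of $v$ alone and must instead be absorbed by the conservative structure via the $(1+n)$-weight and the continuity equation. A secondary subtlety, entirely particular to this setting, is that the vorticity contribution to $v$ and $B$ is non-decaying of size $\delta_0$; the argument is therefore finite-in-time by construction, and relies crucially on the scaling $T\leq \bar\eps/\delta_0$ in order to keep the Gronwall exponential uniformly bounded.
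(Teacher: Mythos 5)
Your overall architecture matches the paper's: a modified energy with a $(1+n)$ weight to kill the quasilinear transport term, quadratic cancellation at top order, a cubic remainder bounded by $L^\infty$ norms of low-order derivatives, and a Gronwall-type integration exploiting the splitting into a decaying dispersive part (via the $Z$-norm and Lemma~\ref{LinEstLem}) and a size-$\delta_0$ vorticity part, with $T\delta_0\lesssim\bar\eps$ closing the loop. But there is a genuine gap in the choice of energy functional. You write it in the physical variables $v,E,B$ and assert that the rotation vector-fields $\Omega_j$ ``commute with $\operatorname{div}$, $\nabla\times$, \ldots'' This is false for the componentwise action of $\Omega_j$ on vector fields, which is the action implicit in Definition~\ref{OneDef} and the norms in~\eqref{alx1}. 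For instance, $\Omega_3(\nabla\times v)_1-(\nabla\times\Omega_3 v)_1=\partial_1 v_3\neq 0$, because $[\Omega_j,\partial_k]=\pm\partial_\ell$. Consequently, when $\L\in\V_{N_0}$ contains rotations and you differentiate $\mathcal{E}_\L$, the electromagnetic pairing produces commutator terms of the schematic form $\int \L E\cdot[\L,\nabla\times]B-\L B\cdot[\L,\nabla\times]E$, and the pressure/continuity pairing produces terms involving $[\L,\nabla]n$; these are of the same derivative order as the principal terms, are \emph{quadratic} rather than cubic, and do not cancel by any antisymmetric structure. A Gronwall argument then collapses, since quadratic contributions do not decay.

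The paper sidesteps this precisely by building the energy functional in the Hodge variables $F,G,Z,W,B$ (see~\eqref{entot1}) rather than $v,E$, rewriting the system in the form~\eqref{Alx13.5} so that every linear operator appearing ($|\nabla|$, $1+d|\nabla|^2$, $1+|\nabla|^2$) is a scalar Fourier multiplier and hence commutes exactly with the componentwise $\Omega_j$. (The footnote in Section~\ref{TotalEnergy} flags exactly this point: ``It is important to write the system in terms of these variables, not the more physical variables $n,v,E,B$, in order to be able to prove energy estimates that include the rotation vector-fields.'') After this change of variables your remaining steps — extracting the derivative-loss terms, symmetrizing with Riesz transforms as in~\eqref{Alx13.11}, obtaining the cubic bound~\eqref{Alx13.7}, and integrating against the $L^1_t$-integrable $L^\infty$ decay bound — go through essentially as you outlined, so the repair is localized: replace $(1+n)|\L v|^2+|\L E|^2$ by $(1+n)\big(|\L F|^2+|\L G|^2\big)+|\L Z|^2+|\L W|^2$ and verify the cancellations there. (One could alternatively introduce spin-corrected rotation operators $\hat\Omega_j=\Omega_j+A_j$ to force exact commutation with $\nabla\times$ etc., but then one would need to check equivalence with the $\H^N$ norms in the paper, which is an extra step the Hodge reformulation avoids.)
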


\begin{proof} Recall the real-valued variables $F,G,Z,W$ defined in \eqref{Alx11},
\begin{equation}\label{Alx11.1}
F=|\nabla|^{-1}\hbox{div}(v),\qquad G=|\nabla|^{-1}\nabla\times v,\qquad Z=|\nabla|^{-1}\hbox{div}(E),\qquad W=|\nabla|^{-1}\nabla\times E,
\end{equation}
and the system \eqref{Alx13} (written now in terms of the variables $F,G,Z,W,B$),\footnote{It is important to write the system in terms of these variables, not the more physical variables $n,v,E,B$, in order to be able to prove energy estimates that include the rotation vector-fields.}
\begin{equation}\label{Alx13.5}
\begin{cases}
\partial_t F+(1+d|\nabla|^2)Z&=-R\cdot(v\cdot \nabla v)-R\cdot (v\times B),\\
\partial_t G+W&=-R\times(v\cdot \nabla v)-R\times (v\times B),\\
\partial_t Z-F&=R\cdot (nv),\\
\partial_t W-G-|\nabla|B&=R\times (nv),\\
\partial_t B+|\nabla|W&=0.
\end{cases}
\end{equation}
Recall that $\hbox{div}(B)=0$ and $n=-|\nabla|Z$.

{\bf{Step 1.}} For $m\in[0,N_0]\cap\mathbb{Z}$ we define the energy functionals $\mathcal{E}_{m}:[0,T]\to\mathbb{R}$,
\begin{equation}\label{entot1}
\begin{split}
\mathcal{E}_{m}(t):=\sum_{\mathcal{L}\in\mathcal{V}_{m}}\int_{\mathbb{R}^3}\big\{d|\L n(t)|^2&+(1+n(t))[|\L F(t)|^2+|\L G(t)|^2]\\
&+|\L Z(t)|^2+|\L W(t)|^2+|\L B(t)|^2\big\}\,dx.
\end{split}
\end{equation}
Notice that the case $m=0$ is similar (but not identical, because of the different cubic correction) to the conserved physical energy in \eqref{EnCons}. Notice also that, for any $t\in[0,T]$,
\begin{equation*}
\mathcal{E}_{N_0}(t)\approx \|(n,F,G,Z,W,B)(t)\|_{\H^{N_0}}^2\approx \|(n,v,E,B)(t)\|_{\widetilde{\H}^{N_0}}^2.
\end{equation*}
In particular, there is a constant $C_1\geq 1$ such that, for any $t\in[0,T]$,
\begin{equation}\label{Alx13.6}
C_1^{-1}\mathcal{E}_{N_0}(t)\leq \|(n,v,E,B)(t)\|_{\widetilde{\H}^{N_0}}^2\leq C_1\mathcal{E}_{N_0}(t).
\end{equation}

We would like to estimate now the energy increment. For $\mathcal{L}\in\mathcal{V}_{N_0}$ let $\mathcal{E}_{\mathcal{L}}$ denote the term in \eqref{entot1} corresponding to the differential operator $\mathcal{L}$. We calculate, using \eqref{Alx13.5},
\begin{equation*}
\begin{split}
\frac{d}{dt}\mathcal{E}_{\mathcal{L}}&=\int_{\mathbb{R}^3}\big\{2d\mathcal{L}n\cdot \mathcal{L}[-|\nabla|F-\nabla\cdot(nv)]-[|\nabla|F+\nabla\cdot(nv)]\cdot [|\L F|^2+|\L G|^2]\\
&+2(1+n)\L F\cdot \mathcal{L} [-(1+d|\nabla|^2)Z+\mathcal{N}_F]+2(1+n)\L G\cdot\mathcal{L}[-W+\mathcal{N}_G]\\
&+2\L Z\cdot\mathcal{L}[F+R\cdot(nv)]+2\L W\cdot \mathcal{L}[G+|\nabla|B+R\times(nv)]-2\L B\cdot\L|\nabla|W\big\}\,dx,
\end{split}
\end{equation*}
where $\mathcal{N}_F$ and $\mathcal{N}_G$ denote the nonlinearities corresponding to the equations for $F$ and $G$ in \eqref{Alx13.5}. Since $\mathcal{L}$ and $|\nabla|$ commute, all the quadratic terms in the expression above cancel, so
\begin{equation}\label{Alx13.8}
\begin{split}
\partial_t\mathcal{E}_{\mathcal{L}}&=\int_{\mathbb{R}^3}\big\{-2d\mathcal{L}n\cdot \mathcal{L}(\nabla\cdot(nv))-[|\nabla|F+\nabla\cdot(nv)]\cdot [|\L F|^2+|\L G|^2]\\
&+2(1+n)\L F\cdot \mathcal{L} \mathcal{N}_F-2n\L F\cdot \mathcal{L} (1+d|\nabla|^2)Z+2(1+n)\L G\cdot\mathcal{L}\mathcal{N}_G-2n\L G\cdot\mathcal{L}W\\
&+2\L Z\cdot\mathcal{L}(R\cdot(nv))+2\L W\cdot \mathcal{L}(R\times(nv))\big\}\,dx.
\end{split}
\end{equation}

{\bf{Step 2.}} We would like to show that, for any $t\in[0,T]$,
\begin{equation}\label{Alx13.7}
|\partial_t\mathcal{E}_{\mathcal{L}}(t)|\lesssim \|(n,v,E,B)(t)\|_{\widetilde{\H}^{N_0}}^2\|(n,v,E,B)(t)\|_{\mathcal{W}^{N_0/2,\infty}}.
\end{equation}
All the terms in \eqref{Alx13.8} are at least cubic, but we also need to avoid potential loss of derivatives. Let $\mathcal{A}_2(t):=\|(n,v,E,B)(t)\|_{\widetilde{\H}^{N_0}}$ and $\mathcal{A}_\infty(t):=\|(n,v,E,B)(t)\|_{\mathcal{W}^{N_0/2,\infty}}$. Notice that
\begin{equation*}
\mathcal{A}_\infty(t)\lesssim \mathcal{A}_2(t)\lesssim \bar{\eps}\qquad\text{ for any }t\in[0,T].
\end{equation*}
Some of the terms in \eqref{Alx13.8} can be estimated easily, using the definitions \eqref{Alx11.1}, i.e.
\begin{equation*}
\begin{split}
\Big|\int_{\mathbb{R}^3}&[|\nabla|F+\nabla\cdot(nv)]\cdot [|\L F|^2+|\L G|^2]\,dx\Big|+\Big|\int_{\mathbb{R}^3}n\L F\cdot \mathcal{L} Z\,dx\Big|+\Big|\int_{\mathbb{R}^3}n\L G\cdot\mathcal{L}W\,dx\Big|\\
&+\Big|\int_{\mathbb{R}^3}\L Z\cdot\mathcal{L}(R\cdot(nv))\,dx\Big|+\Big|\int_{\mathbb{R}^3}\L W\cdot \mathcal{L}(R\times(nv))\,dx\Big|\lesssim \mathcal{A}_2^2\mathcal{A}_\infty,
\end{split}
\end{equation*}
since these terms do not lose derivatives.

For the remaining terms, we extract first the components that could lose derivatives. Clearly
\begin{equation*}
\begin{split}
\big\|\mathcal{L}(\nabla\cdot(nv))-[n\L\partial_j v_j+v_j\partial_j\L n]\big\|_{L^2}&\lesssim \mathcal{A}_2\mathcal{A}_\infty,\\
\big\|\mathcal{L}\mathcal{N}_F+R_j(v_k\partial_k\L v_j)\big\|_{L^2}&\lesssim \mathcal{A}_2\mathcal{A}_\infty,\\
\big\|(\mathcal{L}\mathcal{N}_G)_j+\in_{jab}R_a(v_k\partial_k\L v_b)\big\|_{L^2}&\lesssim \mathcal{A}_2\mathcal{A}_\infty.
\end{split}
\end{equation*}
Using the general bound
\begin{equation}\label{Alx13.11}
\|R_j(f\cdot |\nabla|g)-f\cdot R_j|\nabla|g\|_{L^2}\lesssim \|g\|_{L^2}\big(\sum_{k\in\mathbb{Z}}2^k\|P_kf\|_{L^\infty}\big),
\end{equation}
we can further replace $R_j(v_k\partial_k\L v_j)$ by $v_k\cdot \partial_k\L R_jv_j$ and $\in_{jab}R_a(v_k\partial_k\L v_b)$ by $v_k\cdot\in_{jab}\partial_k\L R_av_b$ at the expense of acceptable errors. For \eqref{Alx13.7} it remains to prove that
\begin{equation}\label{Alx13.9}
|\mathcal{E}''_{\mathcal{L}}(t)|\lesssim \mathcal{A}_2(t)^2\mathcal{A}_\infty(t),
\end{equation}
where
\begin{equation*}
\begin{split}
\mathcal{E}''_{\mathcal{L}}=\int_{\mathbb{R}^3}\big\{&-2d\mathcal{L}n\cdot [n\L\partial_j v_j+v_j\partial_j\L n]-2(1+n)\L F\cdot v_k\cdot \partial_k\L R_jv_j\\
&-2dn\L F\cdot \mathcal{L} |\nabla|^2Z-2(1+n)\L G_j\cdot v_k\cdot\in_{jab}\partial_k\L R_av_b\big\}\,dx.
\end{split}
\end{equation*}

Since $R_jv_j=F$ and $\in_{jab}R_av_b=G_j$ we have
\begin{equation*}
\Big|\int_{\mathbb{R}^3}(1+n)\L F\cdot v_k\cdot \partial_k\L R_jv_j\,dx\Big|+\Big|\int_{\mathbb{R}^3}(1+n)\L G_j\cdot v_k\cdot\in_{jab}\partial_k\L R_av_b\,dx\Big|\lesssim \mathcal{A}_2^2\mathcal{A}_\infty.
\end{equation*}
We also have, using integration by parts
\begin{equation*}
\Big|\int_{\mathbb{R}^3}-2d\mathcal{L}n\cdot v_j\partial_j\L n\,dx\Big|\lesssim \mathcal{A}_2^2\mathcal{A}_\infty.
\end{equation*}
Combining the remaining terms in $\mathcal{E}''_{\mathcal{L}}$ and recalling that $n=-|\nabla|Z$ and $\partial_jv_j=|\nabla|F$, it remains to show that
\begin{equation}\label{Alx13.10}
\begin{split}
\Big|\int_{\mathbb{R}^3}\big\{ -n\mathcal{L}n\cdot \L |\nabla|F+n\L F\cdot \mathcal{L} |\nabla|n\big\}\,dx\Big|\lesssim \mathcal{A}_2^2\mathcal{A}_\infty.
\end{split}
\end{equation}
This follows using again the bound \eqref{Alx13.11} and the identity $-|\nabla|=R_j\partial_j$. The desired bound \eqref{Alx13.7} follows.

{\bf{Step 3.}} Given \eqref{Alx13.6}, we estimate first
\begin{equation*}
\begin{split}
\|(n,v,E,B)(t)\|_{\widetilde{\H}^{N_0}}^2&\leq C_1\mathcal{E}_{N_0}(0)+C_1\int_{0}^t|(\partial_s\mathcal{E}_{N_0})(s)|\,ds\\
&\leq C_1^2\|(n,v,E,B)(0)\|_{\widetilde{\H}^{N_0}}^2+C_1\int_{0}^t|(\partial_s\mathcal{E}_{N_0})(s)|\,ds.
\end{split}
\end{equation*}
Since $\|(n,v,E,B)(0)\|_{\widetilde{\H}^{N_0}}^2\lesssim\bar{\eps}^2$ (see \eqref{bootstrap1}), using also \eqref{Alx13.7}, for \eqref{bootstrapimp3} it suffices to show that
\begin{equation}\label{Alx14.1}
\int_0^T\|(n,v,E,B)(t)\|_{\mathcal{W}^{N_0/2,\infty}}\,dt\lesssim \overline{\eps}.
\end{equation}

Using \eqref{Alx17} we have
\begin{equation*}
\|(n,v,E,B)(t)\|_{\mathcal{W}^{N_0/2,\infty}}\lesssim \sum_{k\in\mathbb{Z},\,\mathcal{L}\in\mathcal{V}_{N_0/2}}\big\{\|P_k\mathcal{L}U_e(t)\|_{L^\infty}+\|P_k\mathcal{L}U_b(t)\|_{L^\infty}+\|P_k\mathcal{L}Y(t)\|_{L^\infty}\big\}.
\end{equation*}
Recall that $U_e(t)=e^{-it\Lambda_e}V_e(t)$, $U_b(t)=e^{-it\Lambda_b}V_b(t)$, and $\|(V_e(t),V_b(t))\|_{Z}\lesssim\overline{\eps}$, see \eqref{bootstrap2}. The $L^\infty$ estimates \eqref{LinftyBd2} show that, for any $t\in[0,T]$,
\begin{equation*}
\sum_{k\in\mathbb{Z},\,\mathcal{L}\in\mathcal{V}_{N_0/2}}\big\{\|P_k\mathcal{L}U_e(t)\|_{L^\infty}+\|P_k\mathcal{L}U_b(t)\|_{L^\infty}\big\}\lesssim \bar{\eps}(1+t)^{-1-\beta}.
\end{equation*}
Moreover, recalling the bootstrap assumption \eqref{bootstrapV2}, for any $t\in[0,T]$,
\begin{equation*}
\sum_{k\in\mathbb{Z},\,\mathcal{L}\in\mathcal{V}_{N_0/2}}\|P_k\mathcal{L}Y(t)\|_{L^\infty}\lesssim \delta_0.
\end{equation*}
The desired inequality \eqref{Alx14.1} follows since $T\leq\overline{\eps}/\delta_0$, which completes the proof.
\end{proof}

\subsection{Control of the vorticity energy}\label{vortEn} In this subsection we prove the following:

\begin{proposition}\label{BootstrapEE2}
With the hypothesis in Proposition \ref{bootstrap}, we have, for any $t\in[0,T]$,
\begin{equation}\label{bootstrapimp3.5}
\|(1+|x|^2)^{1/4}Y(t)\|_{\H^{N_1}}\leq \overline{C}\delta_0/2.
\end{equation}
\end{proposition}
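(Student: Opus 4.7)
The plan is to run a weighted energy method directly on the vorticity transport equation, using the dispersive decay of $v$ already available from Section~\ref{TotalEnergy}. First I would reduce the equation to transport form. Since $\nabla\cdot Y=0$ (which follows from $\nabla\cdot B=0$ and $\nabla\cdot(\nabla\times v)=0$), the vector identity $\nabla\times(v\times Y)=-(v\cdot\nabla)Y+(Y\cdot\nabla)v-(\nabla\cdot v)Y$ rewrites the third equation of \eqref{KG} as
$$\partial_t Y+(v\cdot\nabla)Y=(Y\cdot\nabla)v-(\nabla\cdot v)Y.$$
Setting $\omega(x):=(1+|x|^2)^{1/4}$, I note $\omega\geq 1$, $|\nabla\omega|\lesssim 1\lesssim\omega^2$, and $[\Omega_j,\omega]=0$ since $\omega$ is radial. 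Define
$$\mathcal{F}(t):=\sum_{\mathcal{L}\in\mathcal{V}_{N_1}}\|\omega\,\mathcal{L}Y(t)\|_{L^2}^2\approx\|(1+|x|^2)^{1/4}Y(t)\|_{\mathcal{H}^{N_1}}^2,$$
so $\mathcal{F}(0)\lesssim\delta_0^2$ by \eqref{bootstrapV1}; the target is $\mathcal{F}(t)\lesssim\delta_0^2$ on $[0,T]$.

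Next, for each $\mathcal{L}\in\mathcal{V}_{N_1}$,
$$\tfrac{1}{2}\tfrac{d}{dt}\|\omega\mathcal{L}Y\|_{L^2}^2=-\int\omega^2\,\mathcal{L}Y\cdot(v\cdot\nabla)\mathcal{L}Y\,dx+\int\omega^2\,\mathcal{L}Y\cdot R_{\mathcal{L}}\,dx,$$
with $R_{\mathcal{L}}:=\mathcal{L}[(Y\cdot\nabla)v-(\nabla\cdot v)Y]-[\mathcal{L},v\cdot\nabla]Y$. The transport integrand is handled by integration by parts, yielding $|\int\omega^2\mathcal{L}Y\cdot(v\cdot\nabla)\mathcal{L}Y\,dx|=\tfrac12|\int\nabla\cdot(v\omega^2)|\mathcal{L}Y|^2\,dx|\lesssim(\|v\|_{L^\infty}+\|\nabla v\|_{L^\infty})\mathcal{F}$ since $|\nabla\omega^2|\leq 1\leq\omega^2$.

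I would then control $R_{\mathcal{L}}$ via a Leibniz expansion into trilinear pieces of the schematic shape $(\mathcal{L}_1 v)\cdot\nabla^j(\mathcal{L}_2 Y)$ with $|\mathcal{L}_1|\geq 1$, $|\mathcal{L}_1|+|\mathcal{L}_2|\leq N_1$, and $j\in\{0,1\}$. Because $\Omega_j$ commutes with $\omega$ and $[\partial_k,\omega]$ is bounded, commuting the weight past $\mathcal{L}$ costs only lower-order errors. For $|\mathcal{L}_1|\leq N_0/2$ I will use the dispersive bound $\|\mathcal{L}_1 v\|_{L^\infty}\lesssim\|v\|_{\mathcal{W}^{N_0/2,\infty}}$ derived in Section~\ref{TotalEnergy} via Lemma~\ref{LinEstLem}, paired with $\|\omega\nabla^j\mathcal{L}_2Y\|_{L^2}\lesssim\mathcal{F}^{1/2}$. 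For $|\mathcal{L}_1|>N_0/2$, necessarily $|\mathcal{L}_2|<N_1-N_0/2=2$; here I would split $v=v_U+v_Y$ via \eqref{Alx17}, with $v_Y:=-R\times\Lambda_b^{-2}|\nabla|Y$ a smoothing operator of order $-1$ in $Y$ and $v_U$ the dispersive part built from $U_e,U_b$. Because Definition~\ref{MainZDef} of the $Z$ norm incorporates four reserve coordinate derivatives, Lemma~\ref{LinEstLem} still delivers $\|\mathcal{L}_1 v_U\|_{L^\infty}\lesssim\bar\eps(1+t)^{-1-\beta}$ up to $|\mathcal{L}_1|\leq N_1+4$, which pairs with $\|\nabla^j\mathcal{L}_2 Y\|_{L^2}\lesssim\delta_0$. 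The smoothing structure $\|\mathcal{L}_1 v_Y\|_{L^2}\lesssim\|Y\|_{\mathcal{H}^{|\mathcal{L}_1|-1}}\lesssim\delta_0$ then pairs with the Sobolev bound $\|\omega\nabla^j\mathcal{L}_2 Y\|_{L^\infty}\lesssim\mathcal{F}^{1/2}$ (since $|\mathcal{L}_2|+j\leq 3\ll N_1$).

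Assembling,
$$\bigl|\tfrac{d}{dt}\mathcal{F}(t)\bigr|\lesssim\bigl(\|v(t)\|_{\mathcal{W}^{N_0/2,\infty}}+\bar\eps(1+t)^{-1-\beta}+\delta_0\bigr)\mathcal{F}(t).$$
Combining the $L^1_t$ smallness $\int_0^T\|v(s)\|_{\mathcal{W}^{N_0/2,\infty}}ds\lesssim\bar\eps$ established around \eqref{Alx14.1}, the integrable tail $\int_0^\infty(1+s)^{-1-\beta}ds\lesssim 1$, and $\delta_0 T=\bar\eps$, Gronwall's inequality then gives $\mathcal{F}(t)\leq\mathcal{F}(0)e^{C\bar\eps}\lesssim\delta_0^2$, which closes \eqref{bootstrapimp3.5} after choosing $\bar\eps$ small relative to $\overline{C}$. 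The hard part is the high-$\mathcal{L}_1$ commutator: a crude $L^2$ estimate $\|\mathcal{L}_1 v\|_{L^2}\lesssim\bar\eps$ produces a source-type term of size $\bar\eps\delta_0\mathcal{F}^{1/2}$ that integrates to $\bar\eps^2$ over $[0,T=\bar\eps/\delta_0]$ and so fails to close the bootstrap whenever $\delta_0\ll\bar\eps^2$. The Hodge decomposition $v=v_U+v_Y$ is essential: the four reserve derivatives in the $Z$ norm furnish pointwise decay of the dispersive part $\mathcal{L}_1 v_U$, while the $(-1)$-order smoothing of $v_Y$ in $Y$ turns the remaining contribution into $\delta_0$, jointly converting what would be a divergent source into a Gronwall-compatible coefficient with $L^1_t$-small integral.
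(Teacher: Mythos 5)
Your proposal is correct and follows essentially the same route as the paper: define the weighted $\H^{N_1}$ energy, rewrite $\nabla\times(v\times Y)$ in transport form using $\mathrm{div}\,Y=0$, integrate by parts to absorb the $v\cdot\nabla\L Y$ term, split $v$ via the Hodge formula \eqref{Alx17} into a dispersive part and a vorticity part, bound the dispersive coefficient pointwise using the $Z$-norm (with the four reserve coordinate derivatives absorbing the $2^{k^+}$ losses, as in the paper's estimate of $\|\Lambda_2 Q_2\L_2^b U_\sigma\|_{L^\infty}$), bound the vorticity-coefficient piece by $\delta_0$, and integrate over $[0,T]$ with $T\leq\bar\eps/\delta_0$. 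The only differences from the paper are presentational — you case-split on the size of $|\L_1|$ and invoke Gr\"onwall, whereas the paper substitutes the Hodge decomposition immediately, bounds $|\partial_t\mathcal{E}^Y_\L|\lesssim\delta_0^3+\bar\eps(1+t)^{-1-\beta}\delta_0^2$ using the bootstrap assumption, and integrates directly — and your closing paragraph correctly identifies that the Hodge split of $v$ is the step that prevents the naive $L^2$ estimate on $\L_1 v$ from destroying the bootstrap.
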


\begin{proof}
We define vorticity energy functionals
\begin{equation}\label{env1}
\mathcal{E}^Y_{N_1}(t):=\sum_{\mathcal{L}\in\mathcal{V}_{N_1}}\mathcal{E}^Y_{\L}(t),\qquad \mathcal{E}^Y_{\L}(t):=\int_{\mathbb{R}^3} (1+|x|^2)^{1/2}|\L Y(x,t)|^2\,dx.
\end{equation}
Notice that there is a constant $C_2\geq 1$ such that, for any $t\in[0,T]$,
\begin{equation}\label{Alx14.2}
C_2^{-1}\mathcal{E}^Y_{N_1}(t)\leq\|(1+|x|^2)^{1/4}Y(t)\|^2_{\mathcal{H}^{N_1}}\leq C_2\mathcal{E}^Y_{N_1}(t).
\end{equation}

To prove the proposition we need to estimate the increment of the vorticity energy. More precisely, we would like to show that
\begin{equation}\label{Alx14.3}
\big|\partial_t\mathcal{E}^Y_{\L}(t)\big|\lesssim \delta_0^3+\overline{\eps}(1+t)^{-1-\beta}\delta_0^2.
\end{equation}
Indeed, assuming this, we could estimate, for any $t\in[0,T]$,
\begin{equation*}
\begin{split}
\|(1+|x|^2)^{1/4}Y(t)\|^2_{\H^{N_1}}&\leq C_2\mathcal{E}^Y_{N_1}(0)+C_2\int_0^T\big|\partial_t\mathcal{E}^Y_{N_1}(t)\big|\,dt\\
&\leq C_2^2\|(1+|x|^2)^{1/4}Y(0)\|^2_{\H^{N_1}}+C'\int_0^T(\delta_0^3+\overline{\eps}(1+t)^{-1-\beta}\delta_0^2)\,dt\\
&\leq C_2^2\delta_0^2+C''\overline{\eps}\delta_0^2,
\end{split}
\end{equation*}
where we have used the assumptions \eqref{bootstrapV1} and $T\leq\overline{\eps}/\delta_0$. The desired conclusion \eqref{bootstrapimp3.5} follows, provided that $C_2\ll \overline{C}\ll\overline{\eps}^{\,-1/10}$.

To prove \eqref{Alx14.3}, using the last equation in \eqref{KG} we calculate
\begin{equation*}
\partial_t\mathcal{E}^Y_{\L}=\int_{\mathbb{R}^3} 2(1+|x|^2)^{1/2}\L Y\cdot\L[\nabla\times(v\times Y)] \,dx.
\end{equation*}
Since $\mathrm{div}(Y)=0$ we calculate
\begin{equation*}
[\nabla\times(v\times Y)]_j=Y_l\partial_lv_j-Y_j\partial_lv_l-v_l\partial_lY_j.
\end{equation*}
Recall also that $v=-R\Im(U_e)+R\times\Lambda_b^{-1}\Im(U_b)-R\times\Lambda_b^{-2}|\nabla|Y$, see \eqref{Alx17}. Therefore, after integration by parts to remove the potential derivative loss coming from the term $v_l\partial_lY_j$, we see that $|\partial_t\mathcal{E}^Y_{\L}|$ is bounded by a sum of integrals of the form
\begin{equation}\label{Alx14.4}
C\int_{\mathbb{R}^3}(1+|x|^2)^{1/2}|\L Y|\cdot |Q_1\mathcal{L}_1^a Y|\cdot\big[|Q_2\mathcal{L}_2^b Y|+|\Lambda_2Q_2\mathcal{L}_2^b U_\sigma|\big]\,dx,
\end{equation}
where $a+b\leq N_1$, $\mathcal{L}_1^a\in\mathcal{V}_a$, $\mathcal{L}_2^b\in\mathcal{V}_b$, $Q_1,Q_2$ are operators defined by $\mathcal{S}^{10}$ symbols as in Lemma \ref{tech3}, and $\sigma\in\{e,b\}$. In view of \eqref{compat}, and using the bound
\begin{equation*}
\big\|(1+|x|^2)^{1/4}\L'Y(t)\big\|_{L^2}\lesssim \delta_0
\end{equation*}
for any $t\in[0,T]$ and $\L'\in\mathcal{V}_{N_1}$ (see \eqref{bootstrapV2} and \eqref{Alx14.2}), the integral in \eqref{Alx14.4} is dominated by
\begin{equation*}
C\delta_0^3+C\delta_0^2\|\Lambda_2Q_2\mathcal{L}_2^b U_\sigma\|_{L^\infty}.
\end{equation*}
The desired bound \eqref{Alx14.3} follows once we notice that, using \eqref{LinftyBd2}
\begin{equation*}
\begin{split}
\|\Lambda_2Q_2\mathcal{L}_2^b U_\mu(t)\|_{L^\infty}&\lesssim \sum_{k\in\mathbb{Z}}2^{k^+}\|P_ke^{-it\Lambda_\sigma}\mathcal{L}_2^b V_\sigma(t)\|_{L^\infty}\\
&\lesssim \sum_{k\in\mathbb{Z}}2^{k^+}(1+t)^{-1-\beta}2^{2k^+}2^{(1/2-\beta)k}\|P_k\mathcal{L}_2^b V_\sigma(t)\|_{Z_1^\sigma}\\
&\lesssim (1+t)^{-1-\beta}\sup_{|\alpha|\leq 4}\|D^\alpha\mathcal{L}_2^b V_\sigma(t)\|_{Z_1^\sigma}.
\end{split}
\end{equation*}
This is bounded by $C\overline{\eps}(1+t)^{-1-\beta}$, in view of the bootstrap assumption \eqref{bootstrap3}. The desired conclusion \eqref{Alx14.3} follows, which completes the proof of the proposition.
\end{proof}

\section{Improved control of the $Z$-norm, I: setup and preliminary estimates}\label{ParT}

In the next three sections we prove the following bootstrap estimate for the $Z$-norm.

\begin{proposition}\label{BootstrapZNorm}
With the hypothesis in Proposition \ref{bootstrap}, we have, for any $t\in[0,T]$,
\begin{equation}\label{bootstrapimp3.7}
\|(V_e(t),V_b(t))\|_{Z}\leq \overline{C}\bar{\eps}/2.
\end{equation}
\end{proposition}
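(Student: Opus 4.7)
The plan is to bound the $Z$-norm increment via the Duhamel formula. Setting $V_\sigma(t) = e^{it\Lambda_\sigma}U_\sigma(t)$ and using \eqref{KG}, one has $V_\sigma(t) = V_\sigma(0) + \int_0^t e^{is\Lambda_\sigma}\mathcal{N}_\sigma(s)\,ds$, where $\mathcal{N}_\sigma$ is quadratic in $U_e, U_b, Y$ by \eqref{Alx17}. After substituting $U_\mu = e^{-is\Lambda_\mu}V_\mu$, each bilinear piece takes the form
\begin{equation*}
\int_0^t\int_{\mathbb{R}^3}e^{is\Phi_{\sigma\mu\nu}(\xi,\eta)}m(\xi,\eta)\widehat{V_\mu}(\xi-\eta,s)\widehat{V_\nu}(\eta,s)\,d\eta\,ds,
\end{equation*}
with $\mu,\nu\in\mathcal{P}\cup\{0\}$ and phases as in \eqref{zax1} (the index $0$ corresponds to a $Y$-factor, with $\Lambda_0=0$). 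After applying $D^\alpha\mathcal{L}$ for $\mathcal{L}\in\mathcal{V}_{N_1}$, $|\alpha|\leq 4$, distributing the vector-fields across the product, and localizing with $Q_{jk}A_{n,(j)}^\sigma$, the task reduces to showing each contribution is at most $\bar{\eps}\cdot 2^{-(1+\beta)j+4\beta n}/(2\overline{C})$, uniformly in $(k,j)\in\mathcal{J}$ and $n\in\{0,\ldots,j+1\}$. It is natural to split the analysis into three regimes: $U\times U$ (dispersive-dispersive), $U\times Y$ (mixed), and $Y\times Y$ (transport-transport).

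For the $U\times U$ regime we follow the space-time resonance framework. After dyadic decompositions in the time parameter $s\sim 2^m$, the frequencies $|\xi-\eta|\sim 2^{k_1}$, $|\eta|\sim 2^{k_2}$, and the modulation size $|\Phi|\sim 2^p$ (using Lemma \ref{PhiLocLem} for the modulation localization), Lemma \ref{tech5} permits integration by parts in $\eta$ where $|\nabla_\eta\Phi|$ is large, and in $s$ where $|\Phi|$ is large. The residual regions concentrate near the space-time resonance spheres $|\xi|=\gamma_{1,2}$ identified in Proposition \ref{spaceres}(iii) and Remark \ref{largeres}. There the modulation cutoffs $A_n^b$ built into the $Z$-norm confine the analysis to a thin set whose volume is controlled by Lemma \ref{Shur2Lem}, and the $2^{-4\beta n}$ weight in \eqref{znorm2} absorbs the $\log(1/\epsilon)$ loss from \eqref{cas4}; Lemma \ref{RotIBP} supplies the angular integration-by-parts required when $\xi$ itself lies near a resonant sphere. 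The $L^\infty$ dispersive bound \eqref{LinftyBd2} and the $L^2$ energy and Sobolev bounds from \eqref{bootstrap2} feed the resulting bilinear estimates.

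The new difficulty lies in the mixed and pure vorticity interactions, since $Y(s)$ does not decay in $L^\infty$ but only satisfies $\|Y(s)\|_{\mathcal{H}^{N_1}}\lesssim \delta_0$ with the weight $(1+|x|^2)^{1/4}$ by \eqref{bootstrapV2}. For $U\times Y$ one pairs the $L^\infty$ decay $(1+s)^{-1-\beta}$ from \eqref{LinftyBd2} against the $L^2$ control of $Y$ via Lemma \ref{L1easy}, yielding a time-integrable contribution of size $\bar{\eps}\,\delta_0\lesssim \bar{\eps}^{\,2}$ after integrating $s$ up to $T\lesssim \bar{\eps}/\delta_0$. For $Y\times Y$ the phase collapses to $\Phi_{\sigma 0 0}(\xi,\eta)=\Lambda_\sigma(\xi)\geq 1$, so a single integration by parts in $s$ converts the time integral into a boundary term plus a cubic remainder involving $\partial_t Y$; both are controlled by $\delta_0^2$, the spatial weight on $Y$ supplying the low-frequency integrability needed to enter the $B_j^\sigma$ norm. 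The principal obstacle is maintaining the fine balance between the $A_n^b$ modulation cutoffs, the rotational weights carried by $\mathcal{V}_{N_1}$, and the slow decay of $Y$: specifically, proving that the losses incurred near space-time resonances and from transport-type inputs stay strictly inside the $2^{-4\beta n}$ budget, so that summation in the dyadic parameters $(k,k_1,k_2,m,n,p)$ closes the bootstrap with the required factor $1/2$.
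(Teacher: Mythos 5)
Your plan reproduces the paper's own strategy faithfully: you pass to the Duhamel formula \eqref{duhamel2} for $f^{\alpha,\L}_\sigma$, decompose dyadically in time, input frequencies, and modulation, and analyze the three nonlinear regimes separately — the dispersive-dispersive pairing via the space-time resonance machinery (Lemmas \ref{tech5}, \ref{RotIBP}, \ref{PhiLocLem}, \ref{Shur2Lem}, with the $A^\sigma_n$ cutoffs absorbing the resonant-sphere losses, cf. Lemma \ref{Reso01}), the mixed $U\times Y$ interaction by trading $(1+s)^{-1-\beta}$ decay of the dispersive factor against $\|Y(s)\|_{L^2}\lesssim\delta_0\lesssim\bar{\eps}/(1+s)$ (cf. Lemmas \ref{Vo4}--\ref{Vo5}), and the $Y\times Y$ interaction via time integration by parts since $|\Phi_{\sigma00}|=\Lambda_\sigma(\xi)\geq 1$ (cf. Lemma \ref{Vo3}). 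This is exactly the content of Sections \ref{ParT}--\ref{DispInter}, so no further comparison is needed.

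One caution on the level of detail: the $L^\infty\times L^2$ estimate you describe for the $U\times Y$ term does not by itself close the $B_j^\sigma$ estimate, because the $2^{(1+\beta)j}$ weight in \eqref{znorm2} is not paid for by raw dispersive decay when $j$ is close to $m$; the paper's Lemma \ref{Vo5} supplements it with an approximate-finite-speed argument (to dispose of $j>m$), a Schur-test bound on small modulations, and an integration by parts in time on large modulations, all feeding the $\partial_t f$-bounds of Proposition \ref{sDeriv}. Your closing paragraph acknowledges this as the main obstacle, so the plan is sound, but the "time-integrable contribution of size $\bar{\eps}\delta_0$" step should not be read as the whole story for that regime.
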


\subsection{The Duhamel formula} The functions $U_e$, $U_b$, $Y$ satisfy the equations, (see \eqref{KG})
\begin{equation}\label{za1}
\begin{split}
(\partial_t+i\Lambda_e) U_e&= \Lambda_e (R\cdot [n v])+i |\nabla| (|v|^2)/2-i R\cdot (v\times Y),\\
(\partial_t+i\Lambda_b) U_b&= R\times [nv]- i \Lambda_b^{-1} R\times (v\times Y),\\
\partial_tY&=\nabla\times(v\times Y).
\end{split}
\end{equation}
We define $V_{\sigma}(t)=e^{it\Lambda_{\sigma}}U_{\sigma}(t)$, $\sigma\in\{e,b\}$, as before. Also, for simplicity of notation, let
\begin{equation}\label{za2}
U_0:=Y,\qquad V_0:=Y,\qquad \Lambda_0:=0.
\end{equation}
Since
\begin{equation}\label{za2.5}
n=-|\nabla|\Lambda_e^{-1}\Re(U_e),\qquad v=-R\Im(U_e)+R\times\Lambda_b^{-1}\Im(U_b)-R\times\Lambda_b^{-2}|\nabla|Y,
\end{equation}
see \eqref{Alx17}, our system \eqref{za1} can be written in the form
\begin{equation}\label{system8}
(\partial_{t}+i\Lambda_{\sigma})U_{\sigma}=\sum_{\mu,\nu\in\mathcal{P}'}\mathcal{N}_{\sigma\mu\nu}(U_{\mu},U_{\nu})
\end{equation}
for $\sigma\in\{e,b,0\}$. Here $\mathcal{P}':=\{e,b,-e,-b,0\}$ and the nonlinearities are defined by
\begin{equation}\label{system9}
\left(\mathcal{F}\mathcal{N_{\sigma\mu\nu}}(f,g)\right)(\xi)=\int_{\mathbb{R}^{3}}\mathfrak{m}_{\sigma\mu\nu}(\xi,\eta)\widehat{f}(\xi-\eta)\widehat{g}(\eta)\,d\eta.
\end{equation}
for suitable multipliers $\mathfrak{m}_{\sigma\mu\nu}$ which are sums of functions of the form $m(\xi)m'(\xi-\eta)m''(\eta)$.
In terms of the functions $V_\sigma$, the Duhamel formula is, in the Fourier space,
\begin{equation}\label{duhamelDER}
(\partial_s\widehat{V_{\sigma}})(\xi,s)=\sum_{\mu,\nu\in\mathcal{P}'}\int_{\mathbb{R}^3}e^{is\Phi_{\sigma\mu\nu}(\xi,\eta)}\mathfrak{m}_{\sigma\mu\nu}(\xi,\eta)\widehat{V_{\mu}}(\xi-\eta,s)\widehat{V_{\nu}}(\eta,s)\,d\eta,
\end{equation}
where
\begin{equation*}
\Phi_{\sigma\mu\nu}(\xi,\eta)=\Lambda_\sigma(\xi)-\Lambda_{\mu}(\xi-\eta)-\Lambda_\nu(\eta),\qquad\mu,\nu\in\mathcal{P}'=\{e,b,-e,-b,0\}.
\end{equation*}
In integral form this gives, for $\sigma\in\{e,b\}$ and $t\in[0,T]$,
\begin{equation}\label{duhamel}\widehat{V_{\sigma}}(\xi,t)=\widehat{V_{\sigma}}(\xi,0)+\sum_{\mu,\nu\in\mathcal{P}'}\int_{0}^{t}\int_{\mathbb{R}^3}e^{is\Phi_{\sigma\mu\nu}(\xi,\eta)}\mathfrak{m}_{\sigma\mu\nu}(\xi,\eta)\widehat{V_{\mu}}(\xi-\eta,s)\widehat{V_{\nu}}(\eta,s)\,d\eta ds.
\end{equation}

A rotation vector-field $\Omega\in\{\Omega_1,\Omega_2,\Omega_3\}$ acts on the Duhamel formula according to
\begin{equation*}
\begin{split}
\Omega_\xi&(\partial_s\widehat{V_{\sigma}})(\xi,s)=\sum_{\mu,\nu\in\mathcal{P}'}\int_{\mathbb{R}^3}(\Omega_\xi+\Omega_\eta)\big[e^{is\Phi_{\sigma\mu\nu}(\xi,\eta)}\mathfrak{m}_{\sigma\mu\nu}(\xi,\eta)\widehat{V_{\mu}}(\xi-\eta,s)\widehat{V_{\nu}}(\eta,s)\big]\,d\eta\\
&=\sum_{\mu,\nu\in\mathcal{P}'}\sum_{a_1+a_2+a_3=1}\int_{\mathbb{R}^3}e^{is\Phi_{\sigma\mu\nu}(\xi,\eta)}(\Omega_\xi+\Omega_\eta)^{a_1}\mathfrak{m}_{\sigma\mu\nu}(\xi,\eta)(\Omega^{a_2}\widehat{V_{\mu}})(\xi-\eta,s)(\Omega^{a_3}\widehat{V_{\nu}})(\eta,s)\,d\eta.
\end{split}
\end{equation*}
We iterate this formula. It follows that for any $\L\in\mathcal{V}_{N_1}$ and $\alpha$ we have
\begin{equation}\label{DuhamelDER2}
\begin{split}
\partial_s\widehat{f^{\alpha,\L}_{\sigma}}(\xi,s)=\sum_{\mu,\nu\in\mathcal{P}'}\sum_{|\alpha_1|+|\alpha_2|=|\alpha|}\sum_{(\L_1,\L_2,\L_3)\in X_{\L}}\int_{\mathbb{R}^3}&e^{is\Phi_{\sigma\mu\nu}(\xi,\eta)}\mathfrak{m}_{\sigma\mu\nu}^{\L_3}(\xi,\eta)\\
&\times\widehat{f^{\alpha_1,\L_1}_{\mu}}(\xi-\eta,s)\widehat{f^{\alpha_2,\L_2}_{\nu}}(\eta,s)\,d\eta,
\end{split}
\end{equation}
where here we set
\begin{equation*}
X_{\L}:=\{(\L_1,\L_2,\L_3)\in \mathcal{V}_{N_1}\,|\,|\L_1|+ |\L_2|+|\L_3|\leq |\L|\,\}\,,
\end{equation*}
with $|\L|$ designating the order of the differential operator $\L$, and
\begin{equation}\label{za3}
f^{\beta,\L}_\theta:=D^\beta\L V_\theta,\qquad \theta\in\mathcal{P}',\,|\beta|\leq 4,\,\L\in\mathcal{V}_{N_1}.
\end{equation}
In integral form this becomes
\begin{equation}\label{duhamel2}
\begin{split}
\widehat{f^{\alpha,\L}_{\sigma}}(\xi,t)=\widehat{f^{\alpha,\L}_{\sigma}}(\xi,0)+\sum_{\mu,\nu\in\mathcal{P}'}&\sum_{|\alpha_1|+|\alpha_2|=|\alpha|}\sum_{(\L_1,\L_2,\L_3)\in X_{\L}}\int_0^t\int_{\mathbb{R}^3}e^{is\Phi_{\sigma\mu\nu}(\xi,\eta)}\\
&\times\mathfrak{m}_{\sigma\mu\nu}^{\L_3}(\xi,\eta)\widehat{f^{\alpha_1,\L_1}_{\mu}}(\xi-\eta,s)\widehat{f^{\alpha_2,\L_2}_{\nu}}(\eta,s)\,d\eta.
\end{split}
\end{equation}
We summarize below some of the properties of the functions $f^{\beta,\L}_\theta$ and $\partial_tf^{\beta,\L}_\theta$:

\begin{proposition}\label{sDeriv} (i) The multipliers $\mathfrak{m}_{\sigma\mu\nu}^{\L}$, $\L\in\V_{N_1}$, are sums of functions of the form
\begin{equation}\label{za6}
(1+|\xi|^2)^{1/2}q(\xi)q'(\xi-\eta)q''(\eta),\qquad \|q\|_{\mathcal{S}^n}+\|q'\|_{\mathcal{S}^n}+\|q''\|_{\mathcal{S}^n}\lesssim_n 1,
\end{equation}
for any $n\geq 1$, see \eqref{symb} for the definition of the symbol spaces $\mathcal{S}^{n}$.

(ii) Assume that $|\alpha|\leq 4$ and $\L\in\V_{N_1}$. Then, with the notation in \eqref{za3},
\begin{equation}\label{za4}
\|f_\mu^{\alpha,\L}(t)\|_{\mathcal{H}^{N_0-1-|\L|-|\alpha|}}+\|f_0^{\alpha,\L}(t)\|_{\mathcal{H}^{N_0-1-|\L|-|\alpha|}}+\sup_{\L'\in\V_{N_1-|\L|},\,|\beta|\leq 4-|\alpha|}\|D^\beta\L'f_\mu^{\alpha,\L}(t)\|_{Z_1^\sigma}\lesssim\bar{\eps},
\end{equation}
for any $t\in[0,T]$ and $\mu\in\{e,b\}$. Moreover, letting $\langle t\rangle:=(1+t)$,
\begin{equation}\label{za5}
\|(1+|x|^2)^{1/4}\cdot P_{\leq k}f_0^{\alpha,\L}(t)\|_{\mathcal{H}^{N_1-|\L|}}\lesssim \delta_02^{|\alpha|k}\lesssim \bar{\eps}\langle t\rangle^{-1}2^{|\alpha|k},\qquad k\in\mathbb{Z}_+.
\end{equation}

(iii) For $k\in\mathbb{Z}$, $\sigma\in\{e,b,0\}$,  $\L\in\mathcal{V}_{N_1}$, $|\alpha|\leq 4$, and $t\in[0,T]$ we have
\beq\label{sdL2cont}
\|P_k(\partial_t f^{\alpha,\L}_\sigma)(t)\|_{L^2}\lesssim
\bar{\eps}\min\big\{2^{3k/2},\,2^{-k^+(N_0-2-|\L|-|\alpha|)}\langle t\rangle^{-1},\,\,2^{-k^+(N_1-2-|\L|-|\alpha|)}\langle t\rangle^{-3/2}\big\}.
\eeq
Moreover
\beq\label{sdL2cont2}
\|P_k(\partial_t f^{\alpha,\L}_0)(t)\|_{L^2}\lesssim \bar{\eps}2^{-k^+(N_1-2-|\L|-|\alpha|)}\langle t\rangle^{-2}.
\eeq
\end{proposition}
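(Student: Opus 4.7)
Parts (i) and (ii) will essentially fall out of the structure of the system and the bootstrap hypotheses. For (i), I would read off the multipliers directly from the equations \eqref{za1} combined with the Hodge identities \eqref{za2.5}: every nonlinear term carries a prefactor $\Lambda_e$, $|\nabla|$ or $\Lambda_b^{-1}$ (all bounded by $(1+|\xi|^2)^{1/2}$) together with Riesz transforms and inverse Klein--Gordon symbols that all lie in $\mathcal{S}^n$, giving the base multipliers $\mathfrak{m}_{\sigma\mu\nu}$ the announced product form. For $\L\in\V_{N_1}$ one then iteratively writes $\Omega_\xi=(\Omega_\xi+\Omega_\eta)-\Omega_\eta$, integrates by parts in $\eta$, and uses the rotation invariance of $\Phi_{\sigma\mu\nu}$; by Leibniz each derivative or rotation only produces a new sum of symbols still in $\mathcal{S}^n$, so the structure is preserved for $\mathfrak{m}^\L_{\sigma\mu\nu}$. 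Part (ii) is then immediate: since $e^{it\Lambda_\mu}$ is a radial Fourier multiplier of unit modulus it commutes with $D^\alpha$ and $\L$ and is an isometry on every $\H^m$, so the Sobolev bound in \eqref{za4} reduces to \eqref{bootstrap2} and \eqref{Alx18}, and the $Z_1^\sigma$ bound is literally the definition \eqref{znorm} applied to $D^{\beta+\alpha}(\L'\L)V_\mu$. For the weighted bound \eqref{za5} I would apply Lemma \ref{tech3} to commute the weight past the Schwartz projection $P_{\le k}$, observe that $D^\alpha$ on such a localized function costs at most $2^{|\alpha|k}$, and invoke \eqref{bootstrapV2}; the final inequality is the bookkeeping $\delta_0\lesssim \bar\eps/\langle t\rangle$ forced by $T\le \bar\eps/\delta_0$.

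The substantive content lies in part (iii). My plan is to apply the differentiated Duhamel formula \eqref{DuhamelDER2}, reducing matters to bounding $\|P_k\mathcal{N}_{\sigma\mu\nu}^{\L_3}(f_\mu^{\alpha_1,\L_1},f_\nu^{\alpha_2,\L_2})\|_{L^2}$ over admissible triples with $|\alpha_1|+|\alpha_2|=|\alpha|$ and $|\L_1|+|\L_2|+|\L_3|\le |\L|$. By (i) the multiplier has product form with a prefactor $(1+|\xi|^2)^{1/2}\lesssim 2^{k^+}$, so Lemma \ref{L1easy} and a paraproduct decomposition reduce the problem to a suitable choice of H\"older pairing. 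For the first bound in \eqref{sdL2cont} I would use $L^2\times L^2\hookrightarrow L^1\hookrightarrow L^2$ via Bernstein, losing only $2^{3k/2}$; for the second I would place the factor carrying more vector fields in $L^\infty$, extracting $\bar\eps\langle t\rangle^{-1}$ either from the dispersive bound \eqref{LinftyBd2} (when the input is $U_e$ or $U_b$) or from Sobolev embedding combined with $\delta_0\lesssim\bar\eps\langle t\rangle^{-1}$ (when the input is $Y$), and then estimate the remaining factor in $L^2$ through the Sobolev bound from (ii), the inequality $|\L_j|+|\alpha_j|\le|\L|+|\alpha|$ ensuring that the derivative budget reaches $N_0-2-|\L|-|\alpha|$. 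The third bound is similar but uses the sharper pointwise decay $\langle t\rangle^{-3/2}$ obtained from \eqref{LinftyBd} after summation in $(j,n)$ paid for by additional Sobolev derivatives, which explains the weaker $N_1-2-|\L|-|\alpha|$ exponent. Finally, \eqref{sdL2cont2} is easier since every term in the $\partial_tY$ equation carries at least one $Y$ factor, so pairing one $\delta_0\lesssim\bar\eps\langle t\rangle^{-1}$ with either $\bar\eps\langle t\rangle^{-1-\beta}$ (dispersive input) or another $\delta_0$ (vorticity input) yields the claimed $\langle t\rangle^{-2}$ decay.

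The main technical obstacle I anticipate in (iii) is the high--high-to-low paraproduct piece $\sum_{k_1>k}P_k(P_{k_1}g\cdot P_{k_1}h)$: one must check that the polynomial growth in $k_1$ produced by the $L^\infty$ bound \eqref{LinftyBd2} together with the multiplier factor $2^{k^+}$ is absorbed by the high Sobolev decay $2^{-k_1^+M}$ with $M\approx N_0-|\L|-|\alpha|$, so that the series sums to the advertised $2^{-k^+(N_0-2-|\L|-|\alpha|)}$. The large gap $N_0-N_1\gg 1$ fixed in the paper leaves a comfortable margin for this summation and also allows the $\langle t\rangle^{-3/2}$ version of the estimate to go through with only $N_1$ Sobolev loss.
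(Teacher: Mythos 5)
Parts (i) and (ii) track the paper's intent, which disposes of them with a one-line appeal to \eqref{za1}, \eqref{za2.5}, \eqref{bootstrap2}, and \eqref{bootstrapV2}; your expanded justification (rotation invariance of the phase, $e^{it\Lambda_\mu}$ commuting with $D^\alpha$ and $\L$, Lemma \ref{tech3} for the weight) is correct.

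In part (iii), however, the rule you state for deciding which factor is placed in $L^\infty$ — ``the factor carrying more vector fields'' — is wrong and the argument would break. Vector-field count and frequency are independent. Take a high$\times$low interaction, $k_1\approx k\gg k_2$, where $f_\mu=f_\mu^{\alpha_1,\L_1}$ at frequency $k_1$ carries all the derivatives and vector fields, $|\L_1|+|\alpha_1|=N_1+4$, and $f_\nu$ at $k_2$ carries none. Your rule puts $f_\mu$ in $L^\infty$; but then \eqref{za11} yields $\|P_{k_1}e^{-it\Lambda_\mu}f_\mu\|_{L^\infty}\lesssim \bar\eps\langle t\rangle^{-1-\beta}2^{k_1/4}2^{3k_1^+}$ with \emph{no} Sobolev decay in $k_1$, while the Sobolev decay $2^{-k_2^+(N_0-1)}$ from the $L^2$ factor lives at the irrelevant low frequency $k_2$. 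After multiplying by the symbol factor $2^{k^+}$ the resulting estimate grows polynomially in $k\approx k_1$, and there is nothing to sum against the target $2^{-k^+(N_0-2-|\L|-|\alpha|)}$. The criterion the paper uses is purely frequency-based: assume (by the $\eta\mapsto\xi-\eta$ symmetry) $k_1\le k_2$, place the \emph{low-frequency} factor in $L^\infty$ and the high-frequency factor in $L^2$; since $|\L_j|+|\alpha_j|\le N_1+4$ for both factors, the $L^2$ factor always supplies at least $2^{-k_2^+(N_0-N_1-5)}$, which suffices. Relatedly, the obstacle you flag (high-high-to-low) is not the delicate one: there the output $k$ is small, the target is $O(1)$, and the $k_1$-sum converges from Sobolev decay alone; the care is in the unbalanced pieces.

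Your sketch of the $\langle t\rangle^{-3/2}$ bound and of \eqref{sdL2cont2} is also too compressed relative to what is actually done. The paper assumes $\langle t\rangle\ge 2^{20k^+}$ and then runs a case split: $\mu=\nu=0$ (two factors of $\delta_0\lesssim\bar\eps\langle t\rangle^{-1}$); one of $\mu,\nu$ equal to $0$, with a further split at the frequency scale $\min(\langle t\rangle^{-4},2^{k_1})$ so that very low frequencies are handled by Bernstein on the vorticity factor; and $\mu,\nu\ne 0$, where each input is decomposed into $(j_i,k_i)$-pieces $f^\gamma_{j_i,k_i}$ and the second alternative of \eqref{LinftyBd} is used. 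The $\langle t\rangle^{-2}$ decay for $\sigma=0$ drops out of the first two cases, as you note, but ``summation in $(j,n)$ paid for by Sobolev derivatives'' does not describe this structure.
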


\begin{proof}
The bounds on the multipliers $\mathfrak{m}_{\sigma\mu\nu}^{\L}$ follow from the explicit formulas for the nonlinearities in \eqref{za1} and the identities \eqref{za2.5}. The bounds \eqref{za4} follow from the bootstrap assumption \eqref{bootstrap2}, while the bounds \eqref{za5} follow from the bootstrap assumption \eqref{bootstrapV2}.

For part (iii) we use the formula \eqref{DuhamelDER2}. We define the operator $I_{\sigma\mu\nu}=I_{\sigma\mu\nu}^\L$ by
\begin{equation}\label{za9}
\mathcal{F}\big\{I_{\sigma\mu\nu}[f,g]\big\}(\xi):=\int_{\mathbb{R}^3}e^{it\Phi_{\sigma\mu\nu}(\xi,\eta)}\mathfrak{m}_{\sigma\mu\nu}^{\L}(\xi,\eta)\widehat{f}(\xi-\eta)\widehat{g}(\eta)\,d\eta.
\end{equation}
We assume that $t\in[0,T]$ is fixed and sometimes drop it from the notation. For $k\in\mathbb{Z}$ let
\begin{equation}\label{za10}
\mathcal{X}_k:=\{(k_1,k_2)\in\mathbb{Z}^2:\,|\max(k_1,k_2)-k|\leq 6\,\text{ or }(\,\max(k_1,k_2)\geq k+7\,\text{ and }\,|k_1-k_2|\leq 6)\}.
\end{equation}

For simplicity of notation let $f_\mu:=f^{\alpha_1,\L_1}_{\mu}$, $f_\nu:=f^{\alpha_2,\L_2}_{\nu}$,  $|\alpha_1|+|\alpha_2|\leq|\alpha|$, $|\L_1|+|\L_2|\leq|\L|$. We estimate first
\begin{equation*}
\|P_kI_{\sigma\mu\nu}[f_\mu,f_\nu]\|_{L^2}\lesssim 2^{3k/2}\|\mathcal{F}\{I_{\sigma\mu\nu}[f_\mu,f_\nu]\}\|_{L^\infty}\lesssim 2^{3k/2}\|f_\mu\|_{\H^1}\|f_\nu\|_{\H^1}\lesssim \bar{\eps}2^{3k/2}
\end{equation*}
for $k\leq 0$, using \eqref{za4} at the last step. This gives the first estimate in \eqref{sdL2cont}. For the second estimate, we write first, using Lemma \ref{L1easy} and \eqref{za6},
\begin{equation*}
\|P_kI_{\sigma\mu\nu}[f_\mu,f_\nu]\|_{L^2}\lesssim 2^{k^+}\sum_{(k_1,k_2)\in\mathcal{X}_k,\,k_1\leq k_2}\|P_{k_1}e^{-it\Lambda_\mu}f_{\mu}\|_{L^\infty}\|P_{k_2}f_{\nu}\|_{L^2}.
\end{equation*}
Using \eqref{za4} we estimate $\|P_{k_2}f_{\nu}\|_{L^2}\lesssim \bar{\eps}2^{-k_2^+(N_0-1-|\L_2|-|\alpha_2|)}$. Using \eqref{za5} and \eqref{LinftyBd2} we estimate
\begin{equation}\label{za11}
\begin{split}
\|P_{k_1}e^{-it\Lambda_\mu}f_{\mu}\|_{L^\infty}\lesssim \bar{\eps}\langle t\rangle^{-1-\beta}2^{k_1/4}2^{3k_1^+}\cdot 2^{-k_1^+(N_1+4-|\L_1|-|\alpha_1|)},\qquad&\text{ if }\mu\in\{e,b,-e,-b\},\\
\|P_{k_1}e^{-it\Lambda_\mu}f_{\mu}\|_{L^\infty}\lesssim \bar{\eps}\langle t\rangle^{-1}2^{-k_1^+(N_1-|\L_1|-|\alpha_1|)}2^{3k_1/2},\qquad&\text{ if }\mu=0,
\end{split}
\end{equation}
where in the second estimate we used the fact that $\delta_0\lesssim\bar{\eps}(1+t)^{-1}$. Therefore, since $|\L_1|+|\L_2|\leq |\L|$ and $|\alpha_1|+|\alpha_2|\leq|\alpha|$ (the worst case is $|\L_1|=0, |\L_2|=|\L|, |\alpha_1|=0, |\alpha_2|=|\alpha|$),
\begin{equation*}
\begin{split}
\|P_kI_{\sigma\mu\nu}[f_\mu,f_\nu]\|_{L^2}&\lesssim 2^{k^+}\sum_{(k_1,k_2)\in\mathcal{X}_k,\,k_1\leq k_2}\langle t\rangle^{-1}2^{k_1/4}2^{-2k_1^+}\cdot \bar{\eps}2^{-k_2^+(N_0-1-|\L|-|\alpha|)}\\
&\lesssim \bar{\eps}\langle t\rangle^{-1}2^{-k^+(N_0-2-|\L|-|\alpha|)},
\end{split}
\end{equation*}
which gives the second bound in \eqref{sdL2cont}.

To prove the last estimate we may assume that $\langle t\rangle\geq 2^{20k^+}$. If $\mu=\nu=0$ then
\begin{equation*}
\begin{split}
\|P_k&I_{\sigma\mu\nu}[f_\mu,f_\nu]\|_{L^2}\lesssim 2^{k^+}\sum_{(k_1,k_2)\in\mathcal{X}_k,\,k_1\leq k_2}\|P_{k_1}f_{\mu}\|_{L^\infty}\|P_{k_2}f_{\nu}\|_{L^2}\\
&\lesssim 2^{k^+}\sum_{(k_1,k_2)\in\mathcal{X}_k,\,k_1\leq k_2}\bar{\eps}\langle t\rangle^{-1}2^{-k_1^+(N_1-|\L_1|-|\alpha_1|)}2^{3k_1/2}\cdot \bar{\eps}\langle t\rangle^{-1}2^{-k_2^+(N_1-|\L_2|-|\alpha_2|)}\\
&\lesssim \bar{\eps}\langle t\rangle^{-2}2^{-k^+(N_1-2-|\L|-|\alpha|)},
\end{split}
\end{equation*}
using \eqref{za11} and \eqref{za5}. Similarly, if $\mu\neq 0$ and $\nu=0$ then
\begin{equation*}
\|P_kI_{\sigma\mu\nu}[f_\mu,f_\nu]\|_{L^2}\lesssim I+II
\end{equation*}
where
\begin{equation*}
\begin{split}
I&:=2^{k^+}\sum_{(k_1,k_2)\in\mathcal{X}_k,\,2^{k_2}\geq \min(\langle t\rangle^{-4},2^{k_1})}\|P_{k_1}f_{\mu}\|_{L^\infty}\|P_{k_2}f_{\nu}\|_{L^2}\\
&\lesssim 2^{k^+}\sum_{(k_1,k_2)\in\mathcal{X}_k\,2^{k_2}\geq \min(\langle t\rangle^{-4},2^{k_1})}\bar{\eps}\langle t\rangle^{-1-\beta}2^{k_1/4}2^{-k_1^+(N_1+1-|\L_1|-|\alpha_1|)}\cdot \bar{\eps}\langle t\rangle^{-1}2^{-k_2^+(N_1-|\L_2|-|\alpha_2|)}\\
&\lesssim \bar{\eps}\langle t\rangle^{-2}2^{-k^+(N_1-2-|\L|-|\alpha|)}
\end{split}
\end{equation*}
and
\begin{equation*}
\begin{split}
II&:=2^{k^+}\sum_{(k_1,k_2)\in\mathcal{X}_k,\,2^{k_2}\leq \min(\langle t\rangle^{-4},2^{k_1})}\|P_{k_1}f_{\mu}\|_{L^2}\|P_{k_2}f_{\nu}\|_{L^\infty}\\
&\lesssim 2^{k^+}\sum_{(k_1,k_2)\in\mathcal{X}_k\,\,2^{k_2}\leq \min(\langle t\rangle^{-4},2^{k_1})}\bar{\eps}2^{-k_1^+(N_0-N_1-5)}\cdot \bar{\eps} \langle t\rangle^{-1}2^{3k_2/2}\\
&\lesssim \bar{\eps}\langle t\rangle^{-2}2^{-k^+(N_1-2-|\L|-|\alpha|)}.
\end{split}
\end{equation*}
These three estimates suffice to prove the desired bound in \eqref{sdL2cont} (since $2^{k^+}\leq \langle t\rangle^{1/20}$), and also the bound \eqref{sdL2cont2} (since either $\mu=0$ or $\nu=0$ when $\sigma=0$, see the last equation in \eqref{za1}).

Finally, assume that $\mu\neq 0$ and $\nu\neq 0$. We decompose
\begin{equation}\label{za15}
\begin{split}
&f_\mu=\sum_{(k_1,j_1)\in\mathcal{J}}f^\mu_{j_1,k_1}=\sum_{(k_1,j_1)\in\mathcal{J}}P_{[k_1-2,k_1+2]}Q_{j_1k_1}f_\mu,\\
&f_\nu=\sum_{(k_2,j_2)\in\mathcal{J}}f^\nu_{j_2,k_2}=\sum_{(k_2,j_2)\in\mathcal{J}}P_{[k_2-2,k_2+2]}Q_{j_2k_2}f_\nu.
\end{split}
\end{equation}
We estimate, using \eqref{LinftyBd} and \eqref{za4},
\begin{equation*}
\begin{split}
\|P_kI_{\sigma\mu\nu}[f_\mu,f_\nu]\|_{L^2}&\lesssim 2^{k^+}\sum_{(k_1,k_2)\in\mathcal{X}_k,\,j_1\leq j_2}\|e^{-it\Lambda_\mu}f_{j_1,k_1}^\mu\|_{L^\infty}\|f_{j_2,k_2}^\nu\|_{L^2}\\
&\lesssim 2^{k^+}\sum_{(k_1,k_2)\in\mathcal{X}_k,\,j_1\leq j_2}\bar{\eps}2^{5k_1^+/2}\langle t\rangle^{-3/2}2^{(1/2+3\beta)j_1}2^{-k_1^+(N_1+4-|\L_1|-|\alpha_1|)}\\
&\qquad\qquad\qquad\qquad\quad\times\bar{\eps}2^{-j_2(1-3\beta)}2^{-k_2^+(N_1+4-|\L_2|-|\alpha_2|)}\\
&\lesssim \bar{\eps}\langle t\rangle^{-3/2}2^{4k^+},
\end{split}
\end{equation*}
using also that in the sum $k_1\geq -j_1\geq-j_2$ and $k_2\geq -j_2$. This finishes the proof of \eqref{sdL2cont}.
\end{proof}

\subsection{The main reduction} We return now to the proof of Proposition \ref{BootstrapZNorm}. We have
\begin{equation*}
\|(V_e(t),V_b(t))\|_Z\lesssim \sup_{\L\in\mathcal{V}_{N_1},\,|\alpha|\leq 4}[\|f^{\alpha,\L}_e\|_{Z^1_e}+\|f^{\alpha,\L}_b\|_{Z^1_b}],
\end{equation*}
in view of Definition \ref{MainZDef}. We use the integral formula \eqref{duhamel2} and decompose the time integral into dyadic pieces. More precisely, given $t\in[0,T]$, we fix a suitable decomposition of the function $\mathbf{1}_{[0,t]}$, i.e. we fix functions $q_0,\ldots,q_{L+1}:\mathbb{R}\to[0,1]$, $|L-\log_2(2+t)|\leq 2$, with the properties
\begin{equation}\label{nh2}
\begin{split}
&\mathrm{supp}\,q_0\subseteq [0,2], \quad \mathrm{supp}\,q_{L+1}\subseteq [t-2,t],\quad\mathrm{supp}\,q_m\subseteq [2^{m-1},2^{m+1}]\text{ for } m\in\{1,\ldots,L\},\\
&\sum_{m=0}^{L+1}q_m(s)=\mathbf{1}_{[0,t]}(s),\qquad q_m\in C^1(\mathbb{R})\text{ and }\int_0^t|q'_m(s)|\,ds\lesssim 1\text{ for }m\in \{1,\ldots,L\}.
\end{split}
\end{equation}
Let $I_m$ denote the support of $q_m$.

For $m\in[0,L+1]$, $\sigma\in\{e,b\}$, $\mu,\nu\in\mathcal{P}'$, $\L\in\mathcal{V}_{N_1}$, we define the bilinear operators $T_m^{\sigma\mu\nu}$ by
\begin{equation}\label{za16}
\mathcal{F}\{T_m^{\sigma\mu\nu}[f,g]\}(\xi):=\int_0^tq_m(s)\int_{\mathbb{R}^3}e^{is\Phi_{\sigma\mu\nu}(\xi,\eta)}\mathfrak{m}_{\sigma\mu\nu}^{\L}(\xi,\eta)\widehat{f}(\xi-\eta,s)\widehat{g}(\eta,s)\,d\eta.
\end{equation}
For Proposition \ref{BootstrapZNorm} it suffices to prove the following:

\begin{proposition}\label{BootstrapZNorm2}
With the hypothesis in Proposition \ref{bootstrap} and the notation above, we have
\begin{equation}\label{za17}
\sum_{k_1,k_2\in\mathbb{Z}}\big\|Q_{jk}T_m^{\sigma\mu\nu}[P_{k_1}f_\mu,P_{k_2}f_\nu]\big\|_{B_j^\sigma}\lesssim \bar{\eps}\,^2 2^{-om},
\end{equation}
for any fixed $t\in[0,T]$, $m\in[0,L+1]$, $(k,j)\in\mathcal{J}$, $\sigma\in\{e,b\}$, $\mu,\nu\in\mathcal{P}'$, $f_\mu=f^{\alpha_1,\L_1}_{\mu}$, $f_\nu=f^{\alpha_2,\L_2}_{\nu}$, $|\L_1|+|\L_2|\leq N_1$, $|\alpha_1|+|\alpha_2|\leq 4$. Here $o:=10^{-8}$ is a small constant.
\end{proposition}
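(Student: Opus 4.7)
The plan is to reduce the $B^\sigma_j$-norm bound to estimates for each bilinear expression $T_m^{\sigma\mu\nu}[P_{k_1}f_\mu,P_{k_2}f_\nu]$ and then to split the argument according to the type of interaction $(\mu,\nu)\in\mathcal{P}'\times\mathcal{P}'$. Before any analysis I would first dispose of the extreme frequency ranges: if $\max(k,k_1,k_2)$ is very large or $\min(k,k_1,k_2)$ is very negative, I would use Bernstein together with the Sobolev bounds \eqref{za4}--\eqref{za5} and the $2^{-j(1+\beta)}$ weight in the $B_j^\sigma$ norm (absorbing unfavourable factors of $2^k$ via the large number of derivatives $N_0,N_1$) to gain a power $2^{-om}$. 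This reduces the problem to frequencies comparable to $1$, more precisely $|k|,|k_1|,|k_2|\le \mathcal{D}m$, and likewise one may restrict $j\lesssim m$ using the physical-space localization of $Q_{jk}$ combined with integration by parts in $\eta$ (Lemma \ref{tech5}) whenever $2^j\gg 2^m(1+|\nabla_\eta\Phi|+1)$.

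Next I would treat the interactions involving the vorticity $Y=V_0$. When $\mu=\nu=0$ both factors are vorticity; using \eqref{za5} in the form $\|P_{k_i}f_0\|_{L^2}\lesssim \bar\eps\langle s\rangle^{-1}$ and the multiplier bound \eqref{za6}, Lemma \ref{L1easy} gives $\|T_m^{\sigma 00}[\cdot,\cdot]\|_{L^2}\lesssim \bar\eps^2 2^m \cdot 2^{-2m}=\bar\eps^2 2^{-m}$, which yields \eqref{za17}. When exactly one of $\mu,\nu$ equals $0$ I combine the $L^\infty$ decay $\|P_{k_1}e^{-is\Lambda_\mu}f_\mu\|_{L^\infty}\lesssim \bar\eps 2^{-(1+\beta)m}$ from \eqref{LinftyBd2} with the $L^2$ bound $\|P_{k_2}f_0\|_{L^2}\lesssim \bar\eps 2^{-m}$ (again from \eqref{za5}), obtaining $\|T_m\|_{L^2}\lesssim \bar\eps^2 2^m\cdot 2^{-(1+\beta)m}2^{-m}=\bar\eps^2 2^{-\beta m}$. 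In both vorticity cases no resonant analysis is needed because the weighted estimate \eqref{bootstrapV2} converts an inverse power of $t$ into an inverse power of $2^m$.

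The main effort is the purely dispersive interaction $\mu,\nu\in\mathcal{P}$. Here I would follow the Germain--Masmoudi--Shatah/Ionescu--Pausader paradigm and split by modulation: write the time integrand as $\sum_p \varphi_p(\Phi_{\sigma\mu\nu}(\xi,\eta))$ with $p\in[-m,2]$, together with the $A_n$ decomposition in the $\sigma=b$ case. On the large modulation range $2^p\gg 2^{-m}$ I would integrate by parts in $s$ (writing $e^{is\Phi}=(i\Phi)^{-1}\partial_s e^{is\Phi}$) and use Proposition \ref{sDeriv}(iii) to control $\partial_s f_\mu$ and $\partial_s f_\nu$, as well as the boundary pieces $q_m$. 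On the small modulation range $2^p\lesssim 2^{-m}$, Lemma \ref{PhiLocLem} localizes modulation at essentially no cost, and then I use Lemma \ref{Shur2Lem} (Schur's test in the sublevel set of $\Phi$) together with the improved $L^\infty$ bound \eqref{LinftyBd} on atoms $f_{j_i,k_i,n_i}$. Near the space-resonant point $\eta=p(\xi)$ I use Lemma \ref{RotIBP} (integration by parts along the rotation vector-fields $\Omega_\eta$) to transfer the weighted bound \eqref{RadL2} into the estimate, while away from $\eta=p(\xi)$ a standard Lemma \ref{tech5} integration by parts in $\eta$ closes things up.

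The hardest piece, and the bottleneck of the argument, will be the space-time resonant contribution in the case $\sigma=b$, localized by $A^b_{n,(j)}$ to $|\Psi^\dagger_b(\xi)|\approx 2^{-n}$, i.e.\ the spheres $|\xi|=\gamma_{1,2}$ from Remark \ref{largeres}. There both $\nabla_\eta\Phi$ and $\Phi$ vanish simultaneously, so neither of the two integration-by-parts arguments yields decay. The plan is to compensate the absence of decay by the small factor $2^{-4\beta n}$ built into the $B_j^b$ norm \eqref{znorm2}: one estimates the resonant piece by combining the size of the resonant set from Lemma \ref{Shur2Lem} (producing a gain $\sim 2^{-2p/2}$ or better after Schur) with the extra factor from $A_n$, and balances $p$ and $n$ against $j$ and $m$. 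This is precisely where the $2^{-4\beta n}$ weight and the radial/Fourier $L^\infty$ bounds \eqref{FLinftybd}--\eqref{FLinftybdDER} are used simultaneously, and where the $T_m^{\sigma\mu\nu}$ estimate with $(\sigma,\mu,\nu)\in\{(b,e,e),(b,e,b),(b,b,e)\}$ must be handled by hand; the remaining $(\sigma,\mu,\nu)$ triples produce only $\Psi_e^\dagger\ge 10$ and cause no genuine space-time resonance, hence are simpler. After these cases one sums in $p,n,j_1,j_2,k_1,k_2$ using the geometric series coming from $2^{-\beta j}$ and $2^{-4\beta n}$, and concludes \eqref{za17} with $o=10^{-8}$.
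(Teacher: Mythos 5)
The high-level roadmap you describe is the right one, and the dispersive part (modulation splitting, integration by parts in time for large modulation, Schur's test for small modulation, rotation vector-fields near $\eta=p(\xi)$, compensating the genuine $(b,e,e),(b,e,b),(b,b,e)$ space-time resonances with the $2^{-4\beta n}$ factor built into $\|\cdot\|_{B_j^b}$) is a faithful sketch of what the paper does in Section \ref{DispInter}. However, there is a concrete gap in your treatment of the vorticity interactions.

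Your estimates in the $\mu=\nu=0$ and one-vorticity cases produce $L^2$ bounds of the form $\|T_m\|_{L^2}\lesssim \bar\eps^2 2^{-m}$ and $\|T_m\|_{L^2}\lesssim \bar\eps^2 2^{-(1+\beta)m}$ (note: $2^m\cdot 2^{-(1+\beta)m}\cdot 2^{-m}=2^{-(1+\beta)m}$, not $2^{-\beta m}$ as you wrote). But \eqref{za17} is a $B_j^\sigma$-norm bound, and the $B_j^\sigma$ norm carries the weight $2^{(1+\beta)j}$. After the finite-speed-of-propagation reduction one still has $j$ up to $m+\D$, so these $L^2$ bounds only give $2^{(1+\beta)j}\cdot 2^{-m}\sim 2^{\beta m}$ (two vorticities) or $2^{(1+\beta)j}\cdot 2^{-(1+\beta)m}\sim 1$ (one vorticity), neither of which yields the gain $2^{-om}$. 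The weighted vorticity estimate \eqref{bootstrapV2} alone is not enough to close these cases; an additional mechanism is needed.

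The missing ingredient the paper uses is that when at least one factor is transported, the phase carries non-degenerate oscillation that must be exploited. For $\mu=\nu=0$ one has $\Phi_{\sigma 00}(\xi,\eta)=\Lambda_\sigma(\xi)\approx 2^{k^+}\gtrsim 1$, so the modulation is always large and one integrates by parts in time (as in Lemma \ref{Vo3}); the extra $\langle t\rangle^{-2}$ decay of $\partial_t f_0^{\alpha,\L}$ from \eqref{sdL2cont2} then beats the $2^{(1+\beta)j}$ weight. For the mixed dispersive-vorticity interaction one needs a genuine decomposition in $j_1,j_2$ and in modulation (Lemma \ref{Vo5}): the region $|j_1-m|, j_2 \ll m$ requires both a Schur-test estimate on the small modulations and integration by parts in time on the large ones. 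Without these steps the argument does not close in the range $j\approx m$, and you should add them to your proposal.
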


We prove this proposition in the next two sections. We remove first the contribution of very low and very high input frequencies. Then we consider the interactions containing one of the vorticity variables, in which either $\mu=0$ or $\nu=0$ (by symmetry we may assume that $\nu=0$). Finally, in section \ref{DispInter} we consider the purely dispersive interactions, i.e. $\mu,\nu\in\{e,b,-e,-b\}$.

We will often need to localize the phase, in order to be able to integrate by parts in time. For this we define the operators $I_{l,s}^{\sigma\mu\nu}$, $I_{\leq l,s}^{\sigma\mu\nu}$, and $\widetilde{I}_{l,s}^{\sigma\mu\nu}$, $l\in\mathbb{Z}$, by
\begin{equation}\label{vco6}
\begin{split}
\mathcal{F}\big\{I_{l,s}^{\sigma\mu\nu}[f,g]\big\}(\xi)&:=\int_{\mathbb{R}^3}e^{is\Phi_{\sigma\mu\nu}(\xi,\eta)}\varphi_l(\Phi_{\sigma\mu\nu}(\xi,\eta))\mathfrak{m}_{\sigma\mu\nu}^{\L}(\xi,\eta)\widehat{f}(\xi-\eta)\widehat{g}(\eta)\,d\eta,\\
\mathcal{F}\big\{I_{\leq l,s}^{\sigma\mu\nu}[f,g]\big\}(\xi)&:=\int_{\mathbb{R}^3}e^{is\Phi_{\sigma\mu\nu}(\xi,\eta)}\varphi_{\leq l}(\Phi_{\sigma\mu\nu}(\xi,\eta))\mathfrak{m}_{\sigma\mu\nu}^{\L}(\xi,\eta)\widehat{f}(\xi-\eta)\widehat{g}(\eta)\,d\eta,\\
\mathcal{F}\big\{\widetilde{I}_{l,s}^{\sigma\mu\nu}[f,g]\big\}(\xi)&:=\int_{\mathbb{R}^3}e^{is\Phi_{\sigma\mu\nu}(\xi,\eta)}\widetilde{\varphi}_l(\Phi_{\sigma\mu\nu}(\xi,\eta))\mathfrak{m}_{\sigma\mu\nu}^{\L}(\xi,\eta)\widehat{f}(\xi-\eta)\widehat{g}(\eta)\,d\eta,
\end{split}
\end{equation}
where $\widetilde{\varphi}_l(x):= (2^l/x)\varphi_l(x)$. Then we define the operators $T_{m,l}^{\sigma\mu\nu}$, $T_{m,\leq l}^{\sigma\mu\nu}$, $l\in\mathbb{Z}$, by
\begin{equation}\label{vco6.1}
T_{m,l}^{\sigma\mu\nu}[f,g]:=\int_0^tq_m(s)I_{l,s}^{\sigma\mu\nu}[f(s),g(s)]\,ds,\quad T_{m,\leq l}^{\sigma\mu\nu}[f,g]:=\int_0^tq_m(s)I_{\leq l,s}^{\sigma\mu\nu}[f(s),g(s)]\,ds,
\end{equation}
compare with \eqref{za16}. We record the integration by parts identity
\begin{equation}\label{vco6.2}
\begin{split}
T_{m,l}^{\sigma\mu\nu}&[f,g]=i2^{-l}\int_{0}^tq'_m(s)\widetilde{I}_{l,s}^{\sigma\mu\nu}[f(s),g(s)]\,ds\\
&+i2^{-l}\int_{0}^tq_m(s)\widetilde{I}_{l,s}^{\sigma\mu\nu}[(\partial_sf)(s),g(s)]\,ds+i2^{-l}\int_{0}^tq_m(s)\widetilde{I}_{l,s}^{\sigma\mu\nu}[f(s),(\partial_sg)(s)]\,ds.
\end{split}
\end{equation}

\section{Improved control of the $Z$-norm, II: vorticity interactions}\label{Sec:Z1Norm}

We start with a lemma that applies for all $\mu,\nu\in\mathcal{P}'$.

\begin{lemma}\label{Vo1}
(Very large or very small input frequencies) We have
\begin{equation}\label{vco1}
\sum_{\max(k_1,k_2)\geq j/41+\beta m-\D}\big\|Q_{jk}T_m^{\sigma\mu\nu}[P_{k_1}f_\mu,P_{k_2}f_\nu]\big\|_{B_j^\sigma}\lesssim \bar{\eps}\,^22^{-om},
\end{equation}
and
\begin{equation}\label{vco2}
\sum_{\min(k_1,k_2)\leq -(2/3)(m+j)(1+\beta)}\big\|Q_{jk}T_m^{\sigma\mu\nu}[P_{k_1}f_\mu,P_{k_2}f_\nu]\big\|_{B_j^\sigma}\lesssim \bar{\eps}\,^2 2^{-om}.
\end{equation}
\end{lemma}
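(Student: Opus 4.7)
The plan is to bound each dyadic piece by the crude inequality $\|g\|_{B_j^\sigma}\lesssim 2^{(1+\beta)j}\|g\|_{L^2}$, a trivial time integration producing $2^m$, and the H\"older form of Lemma \ref{L1easy} (after transferring the phase to rewrite each input as a profile of $e^{-is\Lambda_\cdot}f_\cdot$):
\begin{equation*}
\|I_s^{\sigma\mu\nu}[P_{k_1}f_\mu,P_{k_2}f_\nu]\|_{L^2}\lesssim 2^{k^+}\|e^{-is\Lambda_\mu}P_{k_1}f_\mu\|_{L^\infty}\|P_{k_2}f_\nu\|_{L^2},
\end{equation*}
with the $2^{k^+}$ coming from the $(1+|\xi|^2)^{1/2}$ factor in \eqref{za6} and $k\le\max(k_1,k_2)+O(1)$.

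For \eqref{vco1}, assume WLOG $k_2=\max(k_1,k_2)\ge j/41+\beta m-\mathcal{D}$. Place $L^2$ on $P_{k_2}f_\nu$ and apply \eqref{za4} to obtain $\bar{\eps}\,2^{-(N_0-c)k_2^+}$, where in the worst case $N_0-c\ge N_0-1-N_1-4=43$. On the $L^\infty$ slot, for dispersive $\mu\in\{\pm e,\pm b\}$ invoke \eqref{LinftyBd2} to get $\lesssim\bar{\eps}\,2^{-(1+\beta)m}2^{(1/2-\beta)k_1}2^{2k_1^+}$; combined with the $2^m$ time factor this produces a net $2^{-\beta m}$ gain. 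For $\mu=0$ use instead the argument in \eqref{za11} (Bernstein combined with \eqref{za5}) to obtain $\lesssim\bar{\eps}\langle t\rangle^{-1}2^{3k_1/2}2^{-(N_1-c')k_1^+}$, which cancels $2^m$ outright. Dyadic summation over $(k_1,k_2)$ then produces a net factor $\lesssim 2^{-(N_0-c'')(j/41+\beta m-\mathcal{D})}$, and since $N_0=100$ both the $j$-exponent $(1+\beta)-(N_0-c'')/41$ and the $m$-exponent $-\beta(N_0-c'')\le-10^{-4}$ are negative with room to spare compared to the required margin $o=10^{-8}$.

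For \eqref{vco2}, assume WLOG $k_1=\min(k_1,k_2)\le-(2/3)(m+j)(1+\beta)$. Bound the low-frequency input via Bernstein,
\begin{equation*}
\|e^{-is\Lambda_\mu}P_{k_1}f_\mu\|_{L^\infty}\lesssim 2^{3k_1/2}\|P_{k_1}f_\mu\|_{L^2}\lesssim\bar{\eps}\,2^{3k_1/2},
\end{equation*}
using \eqref{za4} for $\mu\ne 0$ and $\|P_{k_1}f_0\|_{L^2}\lesssim\delta_0\le\bar{\eps}$ for $\mu=0$. Together with $\|P_{k_2}f_\nu\|_{L^2}\lesssim\bar{\eps}\,2^{-(N_0-c)k_2^+}$, this yields
\begin{equation*}
\|Q_{jk}T_m^{\sigma\mu\nu}[P_{k_1}f_\mu,P_{k_2}f_\nu]\|_{B_j^\sigma}\lesssim\bar{\eps}^{\,2}\,2^{(1+\beta)j+m+k^+}\,2^{3k_1/2}\,2^{-(N_0-c)k_2^+}.
\end{equation*}
Summing $2^{3k_1/2}$ over $k_1\le-(2/3)(m+j)(1+\beta)$ gives $2^{-(m+j)(1+\beta)}$, which absorbs $2^{(1+\beta)j+m}$ and leaves a residual $2^{-\beta m}$; the factor $2^{k^+}$ at large output frequencies is controlled by the Sobolev decay on $k_2$, and the sum in $k_2$ consistent with the fixed output $k$ is essentially finite. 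The main obstacle, shared by both halves, is the careful bookkeeping of derivative counts in \eqref{duhamel2} to confirm that at least $N_0-1-N_1-4=43>41$ Sobolev derivatives always remain on the high-frequency factor, and the separate handling of the vorticity subcases ($\mu=0$ or $\nu=0$) where dispersive decay is unavailable and must be replaced by the smallness $\delta_0\le\bar{\eps}\langle t\rangle^{-1}$ coming from $T\le\bar{\eps}/\delta_0$.
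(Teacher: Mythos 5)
Your overall strategy for this lemma is the same as the paper's ($L^\infty\times L^2$ estimates with Lemma \ref{L1easy}, the trivial $2^m$ time factor, and the crude bound $\|\cdot\|_{B_j^\sigma}\lesssim 2^{(1+\beta)j}\|\cdot\|_{L^2}$), and your treatment of \eqref{vco2} is essentially correct. However, there is a genuine gap in your argument for \eqref{vco1}.

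The problem is in the $L^\infty$ bound you place on the low--frequency input when $k_1>0$. You invoke \eqref{LinftyBd2}, which gives $\|e^{-is\Lambda_\mu}P_{k_1}f_\mu\|_{L^\infty}\lesssim \bar{\eps}\,2^{-(1+\beta)m}2^{(1/2-\beta)k_1}2^{2k_1^+}$; for $k_1>0$ this grows like $2^{(5/2-\beta)k_1^+}$. Paired with $\|P_{k_2}f_\nu\|_{L^2}\lesssim \bar{\eps}\,2^{-43k_2^+}$, the $k^+$ factor from the multiplier, and the sum over $k_1\in(0,k_2]$, the effective decay in $k_2^+$ drops from $43$ to about $43-5/2-1=39.5$. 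Since the constraint is $k_2\geq j/41+\beta m-\D$ and the $B_j^\sigma$ norm carries $2^{(1+\beta)j}$, the $j$-exponent after summation is $(1+\beta)-39.5/41\approx +0.037>0$, so the resulting bound is \emph{not} uniform in $j$ (equivalently, for $|k-k_2|\lesssim 1$ it grows like $2^{1.5k^+}$ along $j\approx 41k$). The paper's margin $42/41>1+\beta$ is very tight, and the extra $2^{(5/2)k_1^+}$ loss destroys it. The fix is exactly \eqref{za11}, which you already quote for $\mu=0$ but not for dispersive $\mu$: it supplements \eqref{LinftyBd2} with the factor $2^{-k_1^+(N_1+4-|\L_1|-|\alpha_1|)}$ coming from the unused derivative budget in \eqref{duhamel2}, and this, combined with the coupled constraint $|\L_1|+|\L_2|\leq N_1$, $|\alpha_1|+|\alpha_2|\leq 4$, is what makes the $k_1>0$ contribution harmless and restores the $2^{\min(k_1,0)/4}2^{-43k_2^+}$ bound the paper uses. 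Your closing remark about ``$43>41$ derivatives on the high-frequency factor'' correctly identifies where the margin lives, but the bookkeeping must also be carried out on the $k_1$ slot, not only on the $k_2$ slot.

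Two minor arithmetic points: the $m$-exponent $-\beta(N_0-c'')$ is about $-4\cdot10^{-5}$, not $\leq -10^{-4}$ as you write (still $\ll -o$, so the conclusion is unaffected); and in \eqref{vco2}, after summing $2^{3k_1/2}$ over $k_1\leq -(2/3)(m+j)(1+\beta)$ it is worth noting explicitly that $\mathcal{X}_k$ forces $k_2\approx k$ here, so the $k_2$-sum is genuinely $O(1)$ terms and the factor $2^{k^+}2^{-(N_0-c)k_2^+}$ is bounded.
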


\begin{proof}
We estimate, using Definition \ref{MainZDef}, Lemma \ref{L1easy}, \eqref{za4}, and \eqref{za11},
\begin{equation*}
\begin{split}
\|Q_{jk}T_m^{\sigma\mu\nu}[P_{k_1}f_\mu,P_{k_2}f_\nu]\big\|_{B_j^\sigma}&\lesssim 2^{k^+}2^{(1+\beta)j}2^m\sup_{s\in I_m}\|e^{-is\Lambda_\mu}P_{k_1}f_\mu(s)\|_{L^\infty}\|P_{k_2}f_\nu(s)\|_{L^2}\\
&\lesssim 2^{k^+}\bar{\eps}\,^22^{(1+\beta)j}2^{\min(k_1,0)/4}2^{-(N_0-N_1-5)k_2^+},
\end{split}
\end{equation*}
if $k_1\leq k_2$. The bound \eqref{vco1} follows by summation over $(k_1,k_2)\in\mathcal{X}_k$ with $k_2\geq k_1$, $k_2\geq j/41+\beta m$. For the second bound we estimate
\begin{equation*}
\begin{split}
\|Q_{jk}T_m^{\sigma\mu\nu}[P_{k_1}f_\mu,P_{k_2}f_\nu]\big\|_{B_j^\sigma}&\lesssim 2^{k^+}2^{(1+\beta)j}2^m\sup_{s\in I_m}2^{3k_1/2}\|P_{k_1}f_\mu(s)\|_{L^2}\|P_{k_2}f_\nu(s)\|_{L^2}\\
&\lesssim 2^{k^+}\bar{\eps}\,^22^{(1+\beta)j}2^m2^{3k_1/2}2^{-4k_2^+},
\end{split}
\end{equation*}
if $k_1\leq k_2$. The bound \eqref{vco2} follows.
\end{proof}

In the rest of the section we prove Proposition \ref{BootstrapZNorm2} when $\nu=0$. For simplicity of notation, in the rest we drop the superscripts $\sigma\mu\nu$, and write simply $T_m$ instead of $T_m^{\sigma\mu\nu}$, $\widetilde{I}_{l,s}$ instead of $\widetilde{I}_{l,s}^{\sigma\mu\nu}$ etc.  We divide the proof into several lemmas, depending on the relative sizes of the main variables. In view of Lemma \ref{Vo1}, we need to consider only $\approx (j+m)^2$ pairs $(k_1,k_2)$; thus it suffices to prove that
\begin{equation}\label{vco3}
\big\|Q_{jk}T_m[P_{k_1}f_\mu,P_{k_2}f_0]\big\|_{B_j^\sigma}\lesssim \bar{\eps}\,^2 2^{-2om-2oj},
\end{equation}
where the pair $(k_1,k_2)$ is fixed and satisfies
\begin{equation}\label{vco4}
k_1,k_2\in[-(2/3)(m+j)(1+\beta),j/41+\beta m-\D].
\end{equation}

\begin{lemma}\label{Vo2}
(Approximate finite speed of propagation) The bound \eqref{vco3} holds provided that
\begin{equation*}
j\geq \max(-k,m)+\D.
\end{equation*}
\end{lemma}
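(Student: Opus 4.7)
The content of the lemma is an almost finite speed of propagation statement: when $j\geq m+\mathcal{D}$, the physical-space output at scale $|x|\sim 2^j$ cannot be reached by a nonlinear interaction starting from localized inputs during time $\lesssim 2^m$. I would decompose both inputs according to spatial profile localization,
\[
P_{k_1}f_\mu = \sum_{j_1\geq\max(-k_1,0)} f^{\mu}_{j_1,k_1},\qquad P_{k_2}f_0 = \sum_{j_2\geq\max(-k_2,0)} f^{0}_{j_2,k_2},
\]
and split the double sum into an \emph{interior} regime $\max(j_1,j_2)\leq j-\mathcal{D}/4$ and a \emph{boundary} regime $\max(j_1,j_2)\geq j-\mathcal{D}/4$. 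The total number of pairs is $O((j+m)^2)$, harmless against a fixed power of $2^{-\mathcal{D}}$. A crucial structural point is that $f_0=Y$ does not propagate ($\Lambda_0=0$) while $e^{-is\Lambda_\mu}$ has speed $\lesssim 1$ for $s\leq 2^m\leq 2^{j-\mathcal{D}}$.

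In the interior regime I would express
\[
Q_{jk}T_m[f^{\mu}_{j_1,k_1},f^{0}_{j_2,k_2}](x)=\phii^{(k)}_j(x)\int_0^tq_m(s)\iint K_s(x,y,z)\,f^{\mu}_{j_1,k_1}(y,s)\,f^{0}_{j_2,k_2}(z,s)\,dy\,dz\,ds,
\]
with kernel (after inserting the $P_k$ localization and the change of variables $\zeta=\xi-\eta$)
\[
K_s(x,y,z)=\iint e^{i[(x-y)\cdot\zeta+(x-z)\cdot\eta+s(\Lambda_\sigma(\zeta+\eta)-\Lambda_\mu(\zeta))]}\,\mathfrak{m}^{\L}_{\sigma\mu 0}(\zeta+\eta,\eta)\,\varphi_k(\zeta+\eta)\,d\zeta\,d\eta.
\]
On the relevant supports $|x|\sim 2^j$, $|y|\lesssim 2^{j_1}$, $|z|\lesssim 2^{j_2}$ with $\max(j_1,j_2)\leq j-\mathcal{D}/4$, the gradients in both $\zeta$ and $\eta$ are of size $\gtrsim 2^j-2^m\gtrsim 2^j$. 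Iterated integration by parts via Lemma \ref{tech5}, with $K\sim 2^j$ and $\epsilon$ controlled by the $\xi$-smoothness of $\widehat{f^{\mu}_{j_1,k_1}},\widehat{f^{0}_{j_2,k_2}}$ (cf.\ \eqref{FLinftybdDER}) and of the symbol $\mathfrak{m}^{\L}_{\sigma\mu 0}$ (cf.\ \eqref{za6}) together with the lower bound \eqref{vco4}, produces $|K_s(x,y,z)|\lesssim 2^{-N\mathcal{D}/8}$ for arbitrary $N$. A Schur-type estimate and the trivial $L^2$ bounds from \eqref{za4} and \eqref{za5} then give a contribution much smaller than $\bar\eps^{\,2}2^{-2om-2oj}$.

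In the boundary regime at least one input already lives at the output scale. If $j_2\geq j-\mathcal{D}/4$, the weighted bound \eqref{za5} yields $\|f^{0}_{j_2,k_2}(s)\|_{L^2}\lesssim\delta_0 2^{-j_2/2}\lesssim\bar\eps\langle t\rangle^{-1}2^{-j/2}$, which combined via Lemma \ref{L1easy} with the dispersive $L^\infty$-bound \eqref{LinftyBd} applied to $e^{-is\Lambda_\mu}f^{\mu}_{j_1,k_1}$, the crucial identity $2^m\delta_0\lesssim\bar\eps$, and the frequency restrictions \eqref{vco4}, gives the desired bound after $s$-integration. The subcase $j_1\geq j-\mathcal{D}/4$ is handled symmetrically: the $Z$-norm control $\|f^{\mu}_{j_1,k_1}\|_{L^2}\lesssim \bar\eps 2^{-(1+\beta)j_1+4\beta n}$ absorbs the $2^{(1+\beta)j-4\beta n}$ factor from the $B_j^\sigma$-norm, and the bound $\|f^{0}_{j_2,k_2}(s)\|_{L^\infty}\lesssim\delta_0 2^{3k_2/2-j_2/2}$ (from \eqref{za5} and Bernstein) together with $\delta_0\lesssim\bar\eps\langle t\rangle^{-1}$ supplies the remaining smallness.

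The \emph{main technical obstacle} is the careful verification of the hypotheses of Lemma \ref{tech5} in the interior regime with the stated value of $\epsilon$: one must track how the symbol $\mathfrak{m}^{\L}_{\sigma\mu 0}$, the frequency cutoffs $\varphi_{k_i}$, and the limited $\xi$-regularity of the profile Fourier transforms interact, and use in an essential way both the lower bound $k_1,k_2\geq-(2/3)(m+j)(1+\beta)$ from \eqref{vco4} (to prevent the $\xi$-smoothness scale from degenerating) and the choice of $\mathcal{D}\gg\mathcal{D}_0$ (to convert the gain $(K\epsilon)^{-N}$ into a fixed geometric factor of the form $2^{-N\mathcal{D}/8}$ which dominates all polynomial losses in $j,m$).
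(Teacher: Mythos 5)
Your decomposition into an interior regime ($\max(j_1,j_2)\leq j-\mathcal{D}/4$) and a boundary regime ($\max(j_1,j_2)\geq j-\mathcal{D}/4$) has a genuine gap in the boundary regime. The correct dichotomy must be in $\min(j_1,j_2)$, not $\max$: the integration-by-parts (finite speed) argument shows the contribution is negligible whenever \emph{either} $j_1$ or $j_2$ is much smaller than $j$, because one can integrate by parts in the Fourier variable conjugate to whichever input has the smaller spatial scale (the gradient of the full phase in that variable is $\gtrsim 2^j-2^{\min(j_1,j_2)}-2^m\gtrsim 2^j$). Thus the non-negligible pairs are exactly those with $\min(j_1,j_2)$ close to $j$ (the paper uses $\min(j_1,j_2)\geq 99j/100$), and only then does the direct $L^\infty\times L^2$ estimate close, because both the factor $2^{-(1+\beta)j_1}$ from the dispersive $L^\infty$ bound on $f^\mu_{j_1,k_1}$ and the factor $2^{-j_2/2}$ from the weighted bound on $f^0_{j_2,k_2}$ are available, giving a total decay $2^{-(3/2)\min(j_1,j_2)}\lesssim 2^{-(3/2)\cdot 99j/100}$, which easily beats the $2^{(1+\beta)j}$ in the $B_j^\sigma$ norm.

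In your boundary subcase $j_2\geq j-\mathcal{D}/4$ with $j_1$ unconstrained (say $j_1=0$), the $L^\infty\times L^2$ estimate, even after invoking $\delta_0\lesssim\bar\eps 2^{-m}$, gives only
\[
\bigl\|Q_{jk}T_m[f^\mu_{j_1,k_1},f^0_{j_2,k_2}]\bigr\|_{B_j^\sigma}\lesssim 2^{(1+\beta)j}2^m\cdot \bar\eps 2^{-j_1}\cdot\bar\eps 2^{-m}2^{-j_2/2}\approx\bar\eps^2 2^{(1/2+\beta)j},
\]
which grows in $j$ rather than decaying as $2^{-2om-2oj}$; the symmetric subcase $j_1\geq j-\mathcal{D}/4$, $j_2\ll j$ has the same problem. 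These subcases should be assigned to the negligible regime via integration by parts, not handled by the direct estimate. Your interior-regime integration-by-parts argument and the final $L^\infty\times L^2$ estimate are both the right tools and essentially match the paper's proof, but the dichotomy itself needs to be $\min(j_1,j_2)<j-O(\mathcal{D})$ (negligible) versus $\min(j_1,j_2)\geq j-O(\mathcal{D})$ (direct estimate).
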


\begin{proof} We define $f^\mu_{j_1,k_1}$ and $f^0_{j_2,k_2}$ as in \eqref{Alx100}. Integration by parts in $\xi$ together with the change of variables $\eta\to\xi-\eta$ show that the contribution is negligible unless $\min(j_1,j_2)\geq 99 j/100$. On the other hand, for any $j_1,j_2$, we can estimate
\begin{equation*}
\big\|Q_{jk}T_m[f_{j_1,k_1}^\mu,f_{j_2,k_2}^0]\big\|_{B_j^\sigma}\lesssim 2^{j(1+\beta)}2^m\sup_{s\in I_m}2^{k^+}\|e^{-is\Lambda_\mu}f_{j_1,k_1}^\mu(s)\|_{L^{\infty}}\|f_{j_2,k_2}^0(s)\|_{L^{2}},
\end{equation*}
using Lemma \ref{L1easy}. Then we estimate $\|f_{j_2,k_2}^0(s)\|_{L^{2}}\lesssim \bar{\eps}2^{-m}2^{-j_2/2}2^{-k_2^+(N_1-|\L_2|-|\alpha_2|)}$ (using \eqref{za5}), and $\|e^{-is\Lambda_\mu}f_{j_1,k_1}^\mu(s)\|_{L^{\infty}}\lesssim \bar{\eps}2^{-j_1}2^{-k_1^+(N_1-|\L_1|-|\alpha_1|)}$ (using \eqref{LinftyBd} if $\mu\neq 0$ and \eqref{Zs3} if $\mu=0$). Therefore
\begin{equation*}
\big\|Q_{jk}T_m[f_{j_1,k_1}^\mu,f_{j_2,k_2}^0]\big\|_{B_j^\sigma}\lesssim \bar{\eps}\,^22^{j(1+\beta)}2^{6\max(k_1^+,k_2^+)}2^{-(3/2)\min(j_1,j_2)}.
\end{equation*}
The desired conclusion \eqref{vco3} follows by summing over pairs $(j_1,j_2)$ with $\min(j_1,j_2)\geq 99 j/100$, and recalling that $\max(k_1^+,k_2^+)\leq j/30$, see \eqref{vco4}.
\end{proof}

\begin{lemma}\label{Vo3}
The bound \eqref{vco3} holds provided that
\begin{equation*}
j\leq \max(-k,m)+\D\qquad\text{ and }\qquad\mu=0.
\end{equation*}
\end{lemma}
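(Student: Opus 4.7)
The decisive structural fact is that when $\mu=\nu=0$, the phase loses all $\eta$-dependence: $\Phi_{\sigma 0 0}(\xi,\eta)=\Lambda_\sigma(\xi)$ with $|\Lambda_\sigma(\xi)|\gtrsim 1$ uniformly for $\sigma\in\{e,b\}$. Writing
\[
\widehat{T_m[P_{k_1}f_0,P_{k_2}f_0]}(\xi)=\int_0^t q_m(s)\,e^{is\Lambda_\sigma(\xi)}N_s(\xi)\,ds,
\]
with
\[
N_s(\xi):=\int\mathfrak{m}^{\L}_{\sigma 0 0}(\xi,\eta)\,\widehat{P_{k_1}f_0}(\xi-\eta,s)\,\widehat{P_{k_2}f_0}(\eta,s)\,d\eta,
\]
my plan for interior $m\in\{1,\dots,L\}$ is to integrate by parts in $s$ against $\partial_s e^{is\Lambda_\sigma(\xi)}=i\Lambda_\sigma(\xi)e^{is\Lambda_\sigma(\xi)}$. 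The resulting factor $\Lambda_\sigma(\xi)^{-1}$ lies in the symbol class $\mathcal{S}^{10}$ (it is smooth, bounded, and all derivatives decay like $|\xi|^{-1-|\rho|}$), so by Lemma \ref{tech3} it is absorbed harmlessly and produces two bulk integrands proportional to $q_m'(s)N_s$ and $q_m(s)\partial_s N_s$. Boundary contributions appear only at $s=0$ (for $m=0$) and $s=t$ (for $m=L+1$).

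To estimate the bulk pieces, I would combine: (a) the uniform vorticity bound $\|f_0(s)\|_{L^2}\leq\delta_0\lesssim\bar{\eps}\langle s\rangle^{-1}$, which is valid since $T\leq\bar{\eps}/\delta_0$; (b) Bernstein on the smaller of the two input frequencies (or Sobolev from \eqref{za4} when it is positive); and (c) the improved decay $\|P_k\partial_s f_0^{\alpha,\L}\|_{L^2}\lesssim\bar{\eps}\langle s\rangle^{-2}$ furnished by \eqref{sdL2cont2}. Via Lemma \ref{L1easy} these produce
\[
\|P_k N_s\|_{L^2}\lesssim \bar{\eps}^2\,2^{k^+}\,2^{(3/2)\min(k_1,k_2)^-}\,\langle s\rangle^{-2},\qquad \|P_k\partial_s N_s\|_{L^2}\lesssim \bar{\eps}^2\,2^{k^+}\,2^{(3/2)\min(k_1,k_2)^-}\,\langle s\rangle^{-3}.
\]
Using $\|q_m'\|_{L^1(I_m)}\lesssim 1$ together with $\int_{I_m}\langle s\rangle^{-3}ds\lesssim 2^{-2m}$, both bulk terms are bounded by $\bar{\eps}^2\,2^{k^+}\,2^{(3/2)\min(k_1,k_2)^-}\,2^{-2m}$ in $L^2$. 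Multiplying by the $B_j^\sigma$-weight $2^{(1+\beta)j}$ and invoking the hypothesis $j\leq m+\mathcal{D}$ gives $\lesssim \bar{\eps}^2\,2^{-(1-\beta)m}$, which comfortably beats the target $\bar{\eps}^2\,2^{-2om-2oj}$ since $o\ll\beta$. In the alternative branch $j\leq -k+\mathcal{D}$ (only possible for $k\leq 0$), the factor $2^{(1+\beta)j}$ is instead absorbed by Bernstein on the output, $2^{3k/2}\lesssim 2^{-3j/2+O(\mathcal{D})}$, in the low-output case of $\mathcal{X}_k$, or by the low-frequency restriction $\min(k_1,k_2)\geq -(2/3)(m+j)(1+\beta)$ of \eqref{vco4} in the low-low case, leaving decay of the form $2^{-(1/2-\beta)j}$.

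The $m=L+1$ boundary case needs no integration by parts: since $\langle s\rangle\sim 2^m$ on $I_{L+1}$, the direct bound $\int_{I_{L+1}}\|N_s\|_{L^2}\,ds\lesssim\bar{\eps}^2\,2^{k^+}\,2^{(3/2)\min^-}\,2^{-m}$ already delivers the required decay. The genuinely delicate case is the $m=0$ boundary, where $\langle s\rangle\sim 1$ supplies no time gain; here one relies solely on $\delta_0^2\leq\bar{\eps}^2$ together with the Bernstein mechanism described in the branch $j\leq -k+\mathcal{D}$ (when $j\leq\mathcal{D}$ the estimate is trivial). The main technical obstacle is the case analysis inside $\mathcal{X}_k$: separating the low-low regime $\max(k_1,k_2)\approx k$, where Bernstein on the smaller input suffices, from the high-high tail $|k_1-k_2|\leq 6$, $\max(k_1,k_2)\geq k+7$, where at positive input frequencies the Sobolev-type bound from \eqref{za4} must replace Bernstein to produce the same $2^{-2oj}$ gain. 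For $\sigma=b$, the operators $A_{n,(j)}^b$ appearing in \eqref{znorm2} are $L^2$-bounded Fourier multipliers and do not affect the analysis, since the factor $2^{-4\beta n}$ can only help and the worst case $n=0$ reduces the $B_j^b$-norm to $2^{(1+\beta)j}$ times the $L^2$-norm treated above.
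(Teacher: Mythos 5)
Your proposal is correct and follows essentially the paper's argument: exploit that $\Phi_{\sigma 00}(\xi,\eta)=\Lambda_\sigma(\xi)$ with $|\Lambda_\sigma(\xi)|\approx 2^{k^+}\gtrsim 1$ to integrate by parts in time, then bound the three resulting bulk terms by combining the vorticity decay $\|P_{k}f_0\|_{L^2}\lesssim\bar\eps\langle s\rangle^{-1}$, the $\langle s\rangle^{-2}$ decay of $\partial_s f_0$ from \eqref{sdL2cont2}, and a Bernstein-type bilinear $L^2$ estimate. The paper uniformly uses output Bernstein ($2^{3k/2}$), which lets both branches $j\leq m+\D$ and $j\leq -k+\D$ be closed in one display via the factor $(2^m+2^{-k})^{1+\beta}$, whereas you alternate between input and output Bernstein; this is slightly less tidy but not a gap.
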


\begin{proof} In this case $|\Phi_{\sigma\mu\nu}(\xi,\eta)|=|\Lambda_\sigma(\xi)|\approx 2^{k_+}$ in the support of the integral, so we can integrate by parts in time. Using \eqref{vco6.2}, it suffices to prove that
\begin{equation}\label{vco10}
\begin{split}
2^{-k^+}2^{j(1+\beta)}\big[\|P_k&\widetilde{I}_{l,s}[P_{k_1}f_0(s),P_{k_2}f_0(s)]\|_{L^2}+2^m\|P_k\widetilde{I}_{l,s}[P_{k_1}(\partial_sf_0)(s),P_{k_2}f_0(s)]\|_{L^2}\\
&+2^m\|P_k\widetilde{I}_{l,s}[P_{k_1}f_0(s),P_{k_2}(\partial_sf_0)(s)]\|_{L^2}\big]\lesssim\bar{\eps}\,^2 2^{-2om-2oj},
\end{split}
\end{equation}
for any $s\in I_m$ and $l\in\mathbb{Z}$ with $|l-k^+|\lesssim 1$. Using \eqref{za5} and the last bound in \eqref{sdL2cont}, we have
\begin{equation*}
\|P_k\widetilde{I}_{l,s}[P_{k_1}f_0,P_{k_2}f_0]\|_{L^2}\lesssim 2^{3k/2}\|P_{k_1}f_0\|_{L^2}\|P_{k_2}f_0\|_{L^2}\lesssim 2^{3k/2}\bar{\eps}\,^22^{-2m}2^{4\max(k_1^+,k_2^+)},
\end{equation*}
\begin{equation*}
\|P_k\widetilde{I}_{l,s}[P_{k_1}(\partial_sf_0),P_{k_2}f_0]\|_{L^2}\lesssim 2^{3k/2}\|P_{k_1}(\partial_sf_0)\|_{L^2}\|P_{k_2}f_0\|_{L^2}\lesssim 2^{3k/2}\bar{\eps}\,^22^{-5m/2}2^{6k_1^+}2^{6k_2^+},
\end{equation*}
and similarly
\begin{equation*}
\|P_k\widetilde{I}_{l,s}[P_{k_1}f_0,P_{k_2}(\partial_sf_0)]\|_{L^2}\lesssim 2^{3k/2}\bar{\eps}\,^22^{-5m/2}2^{6k_1^+}2^{6k_2^+}.
\end{equation*}
Therefore, the left-hand side of \eqref{vco10} is bounded by
\begin{equation*}
C2^{-k^+}(2^{m}+2^{-k})^{1+\beta}2^{3k/2}\bar{\eps}\,^22^{-3m/2}2^{6k_1^+}2^{6k_2^+}\lesssim
\begin{cases}
\bar{\eps}\,^22^{\beta m-m/2}2^{13\max(k_1^+,k_2^+)}&\text{ if }m\geq -k,\\
\bar{\eps}\,^22^{k/2-\beta k}2^{13\max(k_1^+,k_2^+)}&\text{ if }m\leq -k.
\end{cases}
\end{equation*}
The desired conclusion \eqref{vco10} follows since $2^{\max(k_1^+,k_2^+)}\lesssim (2^m+2^{-k})^{1/30}2^{\beta m}$, see \eqref{vco4}.
\end{proof}

\begin{lemma}\label{Vo4}
The bound \eqref{vco3} holds provided that
\begin{equation*}
j\leq -k+2\D\qquad\text{ and }\qquad\mu\in\{e,b,-e,-b\}.
\end{equation*}
\end{lemma}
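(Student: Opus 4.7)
In the regime $j\leq -k+2\D$ the output frequency is bounded, $2^k\lesssim 2^{2\D}$. By Remark \ref{largeres} the function $\Psi_\sigma^\dagger$ is then bounded below on the support of $P_k$, so the operators $A^\sigma_{n,(j)}$ are nontrivial only for $n=0$ and $\|Q_{jk}g\|_{B_j^\sigma}\lesssim 2^{(1+\beta)j}\|P_kg\|_{L^2}$. The proof therefore reduces to establishing the $L^2$ bound
\[
\|P_kT_m[P_{k_1}f_\mu,P_{k_2}f_0]\|_{L^2}\lesssim \bar\eps^{\,2}\,2^{-(1+\beta)j-2om-2oj}
\]
for each fixed $(k_1,k_2)\in\mathcal{X}_k$ in the range \eqref{vco4}.

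The plan is to start from a direct $L^\infty_{f_\mu}\cdot L^2_{f_0}$ dispersive estimate, using Lemma \ref{L1easy}, Lemma \ref{LinEstLem}(i), and the vorticity bound $\|P_{k_2}f_0(s)\|_{L^2}\lesssim\delta_0\lesssim\bar\eps\langle s\rangle^{-1}$ coming from \eqref{bootstrapV2} and $T\leq\bar\eps/\delta_0$. Using the dispersive bound $\|P_{k_1}e^{-is\Lambda_\mu}f_\mu\|_{L^\infty}\lesssim\bar\eps\langle s\rangle^{-1-\beta}2^{(1/2-\beta)k_1+2k_1^+}$ when $k_1\geq -m$, and the Bernstein bound $\bar\eps\, 2^{3k_1/2}$ when $k_1\leq -m$, together with integration against $q_m$, yields after multiplying by $2^{(1+\beta)j}$ and using $j\leq -k+2\D$ a bound of order $\bar\eps^{\,2}\,2^{(1/2-\beta)k_1+2k_1^+ +k^+ -(1+\beta)m+(1+\beta)j}$ in the first case. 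Combined with the constraints in \eqref{vco4} and the shape of $\mathcal{X}_k$, this handles the sub-case $\max(k_1,k_2)\approx k$: at worst $k\approx -m$, $j\approx m$, and the bound collapses to $\bar\eps^{\,2}\,2^{-(1/2-\beta)m}$, giving the required decay. The case $k_1\leq -m$ is similar, returning $\bar\eps^{\,2}\,2^{-3m/2}$ since there $k\approx k_1$.

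The main obstacle is the high-high-to-low sub-case $k_1\approx k_2\geq k+7$, where $k$ can be very negative and $j\leq -k+2\D$ can grow essentially linearly in $m$, so the factor $2^{(1+\beta)j}$ defeats the $2^{-(1+\beta)m}$ gain from the time integral. The remedy is integration by parts in time. In this sub-case $|\xi|\ll|\xi-\eta|$, so the phase
\[
\Phi_{\sigma\mu 0}(\xi,\eta)=\Lambda_\sigma(\xi)-\Lambda_\mu(\xi-\eta)
\]
is bounded below: $|\Phi|\geq 2$ when $\sigma=-\mu$, while in the other cases $|\Phi|\approx\Lambda_\mu(\xi-\eta)-1\approx 2^{2k_1}$ for $k_1\leq 0$ and $\approx 2^{k_1}$ for $k_1\geq 0$. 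One performs a modulation decomposition $T_m=\sum_{l\geq l_0}T^{\sigma\mu 0}_{m,l}+T^{\sigma\mu 0}_{m,\leq l_0}$ via \eqref{vco6}, with threshold $l_0$ chosen just below the phase size. On the large-modulation piece the identity \eqref{vco6.2} produces four terms; the decisive one is the $\partial_s f_0$ contribution, which by \eqref{sdL2cont2} gains an extra $\langle s\rangle^{-1}$ and therefore, together with the $2^{-l}$ factor, supplies the missing $2^{-m}$ decay. The $\partial_s f_\mu$ term is controlled by \eqref{sdL2cont} combined with the dispersive estimate for $P_{k_1}f_\mu$, and the boundary plus $q_m'$ contributions are bounded by the direct estimate above times $2^{-l}$. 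The small-modulation piece $T^{\sigma\mu 0}_{m,\leq l_0}$ is handled by Lemma \ref{PhiLocLem} together with the sublevel-set bound of Lemma \ref{Shur2Lem}, which limits the measure of $\{|\Phi|\leq 2^{l_0}\}$; optimizing $l_0$ then yields the required $2^{-2om-2oj}$ decay.
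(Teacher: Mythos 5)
Your reduction to an $L^2$ bound with prefactor $2^{(1+\beta)j}$ is fine, though the justification is slightly off: $\Psi_b^\dagger$ is \emph{not} bounded below on $O(1)$ frequencies (it vanishes on the spheres $|\xi|=\gamma_1,\gamma_2$); the reduction holds simply because $2^{-4\beta n}\le 1$ in the definition of $B_j^\sigma$. Your direct $L^\infty_{f_\mu}\times L^2_{f_0}$ estimate, exploiting $\|P_{k_2}f_0(s)\|_{L^2}\lesssim\bar\eps\langle s\rangle^{-1}$, is the same as the paper's second estimate, and you correctly identify the regime where it breaks down: when $-k\gg m$, the factor $2^{(1+\beta)j}\approx 2^{(1+\beta)(-k)}$ overwhelms the $\langle s\rangle^{-1-\beta}\cdot\langle s\rangle^{-1}\cdot 2^m$ gain.

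However, the fix you propose does not close the gap. Integration by parts in time plus the extra decay $\|\partial_s f_0\|_{L^2}\lesssim\bar\eps\langle s\rangle^{-2}$ gains only one further power of $\langle s\rangle^{-1}$, while in the worst case (e.g.\ $k\approx -10m$, $k_1\approx k_2\approx 0$, $j\approx -k$) the deficit is of order $2^{(1+\beta)(j-m)}\approx 2^{9m}$. No finite number of $\langle s\rangle^{-1}$ gains from the time direction can overcome a loss that scales with $j$ when $j\gg m$. What actually controls this regime is a gain in the \emph{output frequency}: since the product is being projected to the thin shell $|\xi|\approx 2^k\ll 1$, the trivial bound
$\|P_k(h)\|_{L^2}\lesssim 2^{3k/2}\|\widehat{h}\|_{L^\infty}\lesssim 2^{3k/2}\|f\|_{L^2}\|g\|_{L^2}$
carries a Bernstein factor $2^{3k/2}$ that decays like $2^{-3|k|/2}$, which beats $2^{(1+\beta)(-k)}$ since $3/2>1+\beta$. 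This is precisely the paper's first estimate (giving $2^{(1/2-\beta)k}\bar\eps^2 2^{-5k_1^+}2^{4k_2^+}$ after absorbing the $\langle s\rangle^{-1}$ from the vorticity), which disposes of the case $-k\gtrsim m$ or $\max(k_1^+,k_2^+)\gtrsim m$ with no modulation decomposition at all; the $L^\infty\times L^2$ dispersive estimate is then only needed in the complementary regime, where both $-k$ and $\max(k_1^+,k_2^+)$ are $\lesssim m/100$ and hence $j\lesssim m$. Your IBP argument, as written, is missing the Bernstein gain on the output frequency, and without it the high-high-to-low sub-case is not controlled.
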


\begin{proof} Clearly $k\leq 2\D$. We estimate first, using \eqref{za4}--\eqref{za5},
\begin{equation*}
\begin{split}
\|Q_{jk}T_m[P_{k_1}f_\mu,P_{k_2}f_0]\big\|_{B_j^\sigma}&\lesssim 2^{(1+\beta)(-k)}2^m 2^{3k/2}\sup_{s\in I_m}\|P_{k_1}f_\mu(s)\|_{L^2}\|P_{k_2}f_0(s)\|_{L^2}\\
&\lesssim 2^{(1/2-\beta)k}\bar{\eps}\,^22^{-5k_1^+}2^{4k_2^+}.
\end{split}
\end{equation*}
This suffices to prove \eqref{vco3} unless
\begin{equation*}
m\geq -100k\qquad\text{ and }\qquad m\geq 100\max(k_1^+,k_2^+).
\end{equation*}
On the other hand, if both these inequalities hold then we estimate the $L^\infty$ norm of the dispersive term using \eqref{LinftyBd2},
\begin{equation*}
\begin{split}
\|Q_{jk}T_m[P_{k_1}f_\mu,P_{k_2}f_0]\big\|_{B_j^\sigma}&\lesssim 2^{(1+\beta)(-k)}2^m \sup_{s\in I_m}\|e^{-is\Lambda_\mu}P_{k_1}f_\mu(s)\|_{L^\infty}\|P_{k_2}f_0(s)\|_{L^2}\\
&\lesssim 2^{(1+\beta)(-k)}\bar{\eps}\,^22^{-(1+\beta)m}2^{7\max(k_1^+,k_2^+)},
\end{split}
\end{equation*}
which suffices to complete the proof of the lemma.
\end{proof}

\begin{lemma}\label{Vo5}
The bound \eqref{vco3} holds provided that
\begin{equation*}
-k+2\D\leq j\leq m+\D\qquad\text{ and }\qquad\mu\in\{e,b,-e,-b\}.
\end{equation*}
\end{lemma}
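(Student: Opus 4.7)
\textbf{Plan for the proof of Lemma \ref{Vo5}.} In this regime, since $\nu = 0$ and $\Lambda_0 = 0$, the phase simplifies to $\Phi(\xi,\eta) = \Lambda_\sigma(\xi) - \Lambda_\mu(\xi-\eta)$. The $A_n^\sigma$ cutoffs in the $B_j^\sigma$ norm cost at most a factor $2^{4\beta(j+1)}$, which can be absorbed into the $(1+\beta)$-weight, so I reduce the lemma to the $L^2$ bound $\|Q_{jk}T_m[P_{k_1}f_\mu, P_{k_2}f_0]\|_{L^2} \lesssim \bar{\eps}^{\,2}\,2^{-(1+\beta)j - 3om}$ for $(k_1,k_2)$ in the range \eqref{vco4} and $-k+2\D \leq j \leq m+\D$. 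A direct $L^\infty \cdot L^2$ estimate via Lemma \ref{L1easy}, combining the dispersive decay $\|e^{-is\Lambda_\mu}P_{k_1}f_\mu\|_{L^\infty} \lesssim \bar{\eps}\langle s\rangle^{-1-\beta}$ from \eqref{LinftyBd2} with the vorticity bound $\|P_{k_2}f_0\|_{L^2} \lesssim \bar{\eps}\,2^{-m}$ from \eqref{za5}, yields $\bar{\eps}^{\,2}\,2^{-(1+\beta)m}$, which is off by a polynomial factor when $j \approx m$. The extra gain must come from a modulation decomposition combined with integration by parts.

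The core of the argument is to split $T_m = T_{m,\leq l_0} + \sum_{l > l_0} T_{m,l}$ with threshold $l_0 := -m/2 + 10\beta m$, using the localizations in \eqref{vco6} and \eqref{vco6.1}. For each $l > l_0$ (non-resonant), I apply the identity \eqref{vco6.2} to integrate by parts in time, gaining a $2^{-l}$ factor. The resulting three terms I control with Lemma \ref{PhiLocLem} (taking $q=\infty$, $r=2$), up to $2^{-10m}$ negligible errors: the $q_m'$-piece uses the cumulative bound $\int|q_m'|\,ds \lesssim 1$ together with the standard $L^\infty \cdot L^2$ estimate; the $\partial_s f_\mu$-piece leverages the improved decay $\|P_{k_1}(\partial_s f_\mu)(s)\|_{L^2} \lesssim \bar{\eps}\,\langle s\rangle^{-3/2}$ from \eqref{sdL2cont} paired with an $L^\infty$ control of $f_0$ (via Sobolev on \eqref{za5}); and the $\partial_s f_0$-piece uses the even stronger decay $\|P_{k_2}(\partial_s f_0)(s)\|_{L^2} \lesssim \bar{\eps}\,\langle s\rangle^{-2}$ from \eqref{sdL2cont2} paired with the dispersive $L^\infty$ bound. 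The sum over $l > l_0$ is dominated by $l \approx l_0$, so the overall gain is $2^{-l_0} \cdot \langle s\rangle^{-3/2} \cdot 2^m \sim 2^{m/2 - 10\beta m} \cdot 2^{-m/2}$, which beats the $2^{(1+\beta)j}$ loss from $B_j^\sigma$ since $j \leq m + \D$.

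For the resonant piece $T_{m,\leq l_0}$, Lemma \ref{PhiLocLem} again reduces the task to bounding $\|e^{-is\Lambda_\mu}P_{k_1}f_\mu\|_{L^\infty}\|P_{k_2}f_0\|_{L^2}$ (up to negligible errors). The necessary improvement over the naive estimate comes from the small measure of the near-resonance set $\{|\Phi(\xi,\eta)| \leq 2^{l_0}\}$, which I exploit in two complementary ways. First, splitting $P_{k_2}f_0 = \sum_{j_2}f^0_{j_2,k_2}$ by physical-space localization as in \eqref{Alx100}, the weighted bound \eqref{za5} gives $\|f^0_{j_2,k_2}(s)\|_{L^2} \lesssim \bar{\eps}\,2^{-j_2/2}\langle s\rangle^{-1}$, producing an extra $2^{-j_2/2}$ gain for large $j_2$. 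Second, for small $j_2$ (the spatially concentrated vorticity), I integrate by parts in $\eta$ via Lemma \ref{tech5}: the gradient $\nabla_\eta\Phi = (\nabla\Lambda_\mu)(\xi-\eta)$ is bounded below outside a thin tube near the resonant set (by Proposition \ref{spaceres}(i) adapted to the $\nu = 0$ phase), and the $\eta$-derivatives fall on $\widehat{f^0_{j_2,k_2}}(\eta)$ with controlled growth bounded by $2^{j_2}$. Optimizing the choice of $j_2$ against $l_0$ gives the required smallness.

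The main obstacle is the careful balancing of the three parameters --- the modulation threshold $l_0$, the spatial localization scale $j_2$ of the vorticity, and the output scale $j$ --- in the resonant regime, since the vorticity enjoys only a $(1+|x|^2)^{1/4}$ weighted $L^2$ bound. The essential structural input making this work is the absence of a $O(U^2)$ term in the vorticity equation \eqref{za1}, which produces the $\langle s\rangle^{-2}$ decay of $\partial_s f_0$ in \eqref{sdL2cont2} and provides the critical extra time factor needed to close all the estimates uniformly in the range $-k + 2\D \leq j \leq m + \D$.
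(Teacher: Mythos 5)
Your overall architecture (reduce $B^\sigma_j$ to $L^2$, split by modulation, integrate by parts in time via \eqref{vco6.2} for large modulations, handle the small-modulation resonant piece separately) matches the structure of the paper's proof. But there is a concrete gap in the treatment of the resonant piece $T_{m,\leq l_0}$.

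For small $j_2$ you propose to integrate by parts in $\eta$ via Lemma \ref{tech5}, asserting that the ``$\eta$-derivatives fall on $\widehat{f^0_{j_2,k_2}}(\eta)$ with controlled growth bounded by $2^{j_2}$.'' This overlooks two other factors in the integrand that absorb $\eta$-derivatives: $\widehat{f^\mu_{j_1,k_1}}(\xi-\eta)$, whose derivatives grow like $2^{j_1}$ (it is localized at spatial scale $2^{j_1}$), and the modulation cutoff $\varphi_{\leq l_0}(\Phi(\xi,\eta))$, whose derivatives grow like $2^{-l_0}$. Thus the effective scale in Lemma \ref{tech5} is $\epsilon^{-1}\gtrsim\max(2^{j_1},2^{j_2},2^{-l_0})$. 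In the hardest regime $j_1\approx m$ with all frequencies $O(1)$ — which is exactly where the naive $L^\infty\cdot L^2$ estimate falls short, as you correctly identified — one has $|\nabla_\eta\Phi|\approx 1$ and $K\epsilon\approx 2^{m}\cdot 2^{-j_1}\approx 1$, so the $(K\epsilon)^{-N}$ factor in \eqref{ln1} gives no gain at all. Integration by parts in $\eta$ simply does not produce smallness here; the paper instead applies Schur's test with the sublevel-set bound of Lemma \ref{Shur2Lem}, estimating the kernel $\varphi_{\leq l_0}(\Phi(\xi,\eta))\,|\widehat{f^0_{j_2,k_2}}(\xi-\eta)|$ in $L^\infty_\xi L^1_\eta\cap L^\infty_\eta L^1_\xi$ and then using the $Z$-norm bound $\|\widehat{f^\mu_{j_1,k_1}}\|_{L^2}\lesssim 2^{-j_1(1-3\beta)}$ on the dispersive factor. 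This uses the localization of $f_\mu$ through its $L^2$ size, not its Fourier-side smoothness, and is essential; no choice of threshold $l_0$ fixes the IBP argument in this range.

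Two smaller points. First, the paper splits into three cases according to $|k_1^+-k_2^+|$, because when $k_2^+\geq k_1^++\D$ the phase satisfies $|\Phi|\gtrsim 2^{k}$ automatically, so the low-modulation piece vanishes and the dominant modulation is $l\approx k$, not $l\approx l_0$; your statement that ``the sum over $l>l_0$ is dominated by $l\approx l_0$'' is incorrect in this case (harmlessly so, but it signals that the $k$-dependence has not been tracked). Second, the bound $\|P_{k_2}f_0\|_{L^2}\lesssim\bar\eps 2^{-m}$ you quote from \eqref{za5} actually carries a factor $2^{|\alpha_2|k_2}2^{-(N_1-|\L_2|)k_2^+}$, which in the worst case $(|\L_2|=N_1,\ |\alpha_2|=4)$ gives an uncompensated $2^{4k_2^+}$ loss; with $k_2$ ranging up to $j/41+\beta m$ this must be absorbed, which again is where the case distinction on $k_1,k_2$ (and the fact that the multiplier derivative loss $2^{k^+}$ cancels against $2^{-l}\approx 2^{-k}$ in the high output-frequency case) comes in.
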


\begin{proof} Let $\overline{k}:=\max(k_1^+,k_2^+)$ and define $f^\mu_{j_1,k_1}$ and $f^0_{j_2,k_2}$ as in \eqref{Alx100}. We consider three cases:

{\bf{Case 1.}} Assume that
\begin{equation}\label{vco20}
|k_1^+-k_2^+|\leq\D.
\end{equation}
Then we estimate, using \eqref{za4}--\eqref{za5} and the last inequality in \eqref{LinftyBd},
\begin{equation}\label{vco20.5}
\begin{split}
\big\|Q_{jk}&T_m[f_{j_1,k_1}^\mu,f_{j_2,k_2}^0]\big\|_{B_j^\sigma}\lesssim 2^{j(1+\beta)}2^m\sup_{s\in I_m}2^{k^+}\|e^{-is\Lambda_\mu}f_{j_1,k_1}^\mu(s)\|_{L^{\infty}}\|f_{j_2,k_2}^0(s)\|_{L^{2}}\\
&\lesssim 2^{m(1+\beta)}2^m 2^{k^+}\cdot\bar{\eps}\min(2^{-3m/2}2^{(1/2+3\beta)j_1},2^{-(1+\beta)j_1})\bar{\eps}2^{-m}2^{-j_2/2}2^{-4\overline{k}}\\
&\lesssim \bar{\eps}^22^{m/500}2^{-|m-j_1|/2}2^{-j_2/2}2^{-3\overline{k}}.
\end{split}
\end{equation}
The desired conclusion follows for the sum over the pairs $(j_1,j_2)$ with either $|j_1-m|\geq m/100$ or $j_2\geq m/100$.

It remains to consider the pairs $(j_1,j_2)$ with
\begin{equation}\label{vco21}
|j_1-m|\leq m/100\qquad\text{ and }\qquad|j_2|\leq m/100.
\end{equation}
For such pairs we need additional localization in modulation. Recall the notation in \eqref{vco6}--\eqref{vco6.1}. With $l_0:=-m/10$ we estimate
\begin{equation*}
\begin{split}
\big\|Q_{jk}&T_{m,\leq l_0}[f_{j_1,k_1}^\mu,f_{j_2,k_2}^0]\big\|_{B_j^\sigma}\lesssim 2^{m(1+\beta)}2^m\sup_{s\in I_m}\big\|P_kI_{\leq l_0,s}[f_{j_1,k_1}^\mu(s),f_{j_2,k_2}^0(s)]\big\|_{L^2}\\
&\lesssim 2^{m(1+\beta)}2^m 2^{k^+}\sup_{s\in I_m}\Big\|\int_{\mathbb{R}^3}\varphi_{\leq l_0}(\Phi_{\sigma\mu\nu}(\xi,\xi-\eta))\varphi_k(\xi)\,|\widehat{f_{j_1,k_1}^\mu}(\eta,s)|\,|\widehat{f_{j_2,k_2}^0}(\xi-\eta,s)|\,d\eta\Big\|_{L^2_\xi}.
\end{split}
\end{equation*}
We estimate the $L^2$ norm in the expression above using Schur's test. Moreover
\begin{equation}\label{lin}
\|\widehat{f_{j_2,k_2}^0}(s)\|_{L^\infty}\lesssim \|f_{j_2,k_2}^0(s)\|_{L^1}\lesssim 2^{3j_2/2}\|f_{j_2,k_2}^0(s)\|_{L^2}\lesssim 2^{j_2}2^{-\overline{k}(N_1-|\L_2|-|\alpha_2|)}\bar{\eps}2^{-m},
\end{equation}
using \eqref{za5} for the last estimate. Applying now Lemma \ref{Shur2Lem} and \eqref{lin} we get
\begin{equation*}
\begin{split}
\sup_{\xi\in\mathbb{R}^3}\int_{\mathbb{R}^3}\varphi_{\leq l_0}(\Phi_{\sigma\mu\nu}(\xi,\xi-\eta))\varphi_{[k-4,k+4]}(\xi)\varphi_{[k_1-4,k_1+4]}(\eta)|\widehat{f_{j_2,k_2}^0}(\xi-\eta,s)|\,d\eta\lesssim \\
+\sup_{\eta\in\mathbb{R}^3}\int_{\mathbb{R}^3}\varphi_{\leq l_0}(\Phi_{\sigma\mu\nu}(\xi,\xi-\eta))\varphi_{[k-4,k+4]}(\xi)\varphi_{[k_1-4,k_1+4]}(\eta)|\widehat{f_{j_2,k_2}^0}(\xi-\eta,s)|\,d\xi\\
\lesssim 2^{j_2}2^{-\overline{k}(N_1-|\L_2|-|\alpha_2|-9)}2^{l_0}\bar{\eps}2^{-m} (1+m).
\end{split}
\end{equation*}
Using Definition \ref{MainZDef} and \eqref{za5},
\begin{equation*}
\|\widehat{f_{j_1,k_1}^\mu}(s)\|_{L^2}\lesssim 2^{-j_1(1-3\beta)}\|f_\mu(s)\|_{Z_1^\mu}\lesssim \bar{\eps}2^{-j_1+3\beta j_1}2^{-\overline{k}(N_1-|\L_1|-|\alpha_1|)}.
\end{equation*}
Therefore, by Schur's lemma and recalling that $l_0=-m/10$,
\begin{equation}\label{vc022}
\big\|Q_{jk}T_{m,\leq l_0}[f_{j_1,k_1}^\mu,f_{j_2,k_2}^0]\big\|_{B_j^\sigma}\lesssim \bar{\eps}^22^{m(1+2\beta)}2^{-j_1+3\beta j_1}2^{j_2}2^{l_0}2^{-2\overline{k}}.
\end{equation}
Notice that this suffices to control the contribution of the pairs $(j_1,j_2)$ as in \eqref{vco21}.

It remains to control the control the contribution of the larger modulations $l\geq l_0+1$. For this we integrate by parts in time, as in Lemma \ref{Vo3}. Using \eqref{vco6.2} we bound
\begin{equation*}
\begin{split}
\big\|Q_{jk}T_{m,l}&[f_{j_1,k_1}^\mu,f_{j_2,k_2}^0]\big\|_{B_j^\sigma}\lesssim 2^{m(1+\beta)}2^{-l}\sup_{s\in I_m}\big\{\|P_k\widetilde{I}_{l,s}[f_{j_1,k_1}^\mu(s),f_{j_2,k_2}^0(s)]\|_{L^2}\\
&+2^m\|P_k\widetilde{I}_{l,s}[(\partial_sf_{j_1,k_1}^\mu)(s),f_{j_2,k_2}^0(s)]\|_{L^2}+2^m\|P_k\widetilde{I}_{l,s}[f_{j_1,k_1}^\mu(s),(\partial_sf_{j_2,k_2}^0)(s)]\|_{L^2}\big\}\\
&\lesssim 2^{m(1+\beta)}2^{-l}2^{3k^+}\sup_{s\in I_m}\big\{\|f_{j_1,k_1}^\mu(s)\|_{L^2}\|f_{j_2,k_2}^0(s)\|_{L^2}\\
&+2^m\|(\partial_sf_{j_1,k_1}^\mu)(s)\|_{L^2}\|f_{j_2,k_2}^0(s)\|_{L^2}+2^m\|f_{j_1,k_1}^\mu(s)\|_{L^2}\|(\partial_sf_{j_2,k_2}^0)(s)\|_{L^2}\big\}.
\end{split}
\end{equation*}
Using now \eqref{za4}--\eqref{sdL2cont} we can estimate
\begin{equation*}
\begin{split}
\|f_{j_1,k_1}^\mu(s)\|_{L^2}\|f_{j_2,k_2}^0(s)\|_{L^2}+2^m\|f_{j_1,k_1}^\mu(s)\|_{L^2}\|(\partial_sf_{j_2,k_2}^0)(s)\|_{L^2}&\lesssim \bar{\eps}^22^{-j_1(1-3\beta)}2^{-m/4}2^{-4\overline{k}},\\
2^m\|(\partial_sf_{j_1,k_1}^\mu)(s)\|_{L^2}\|f_{j_2,k_2}^0(s)\|_{L^2}&\lesssim \bar{\eps}^22^{-5m/4}2^{-4\overline{k}}.
\end{split}
\end{equation*}
Therefore, for $j_1\geq m-m/100$ as in \eqref{vco21},
\begin{equation}\label{vco30}
\big\|Q_{jk}T_{m,l}[f_{j_1,k_1}^\mu,f_{j_2,k_2}^0]\big\|_{B_j^\sigma}\lesssim 2^{m(1+\beta)}2^{-l}\cdot \bar{\eps}^22^{-5m/4}2^{m/50}.
\end{equation}
The desired bound \eqref{vco3} follows by combining \eqref{vco20.5}, \eqref{vc022}, and \eqref{vco30}.

{\bf{Case 2.}} Assume now that
\begin{equation}\label{vco40}
k_2^+\geq k_1^++\D.
\end{equation}
In this case $k_2\geq \D$, $|k-k_2|\leq 4$, and $|\Phi_{\sigma\mu\nu}(\xi,\eta)|=|\Lambda_\sigma(\xi)-\Lambda_\mu(\xi-\eta)|\approx 2^k$ in the support in the integral. We are therefore in the case when the modulation is large, so we can integrate by parts in time. As before, using \eqref{vco6.2} we bound, for $|l-k|\leq \D$
\begin{equation*}
\begin{split}
\big\|Q_{jk}T_{m,l}&[P_{k_1}f_\mu,P_{k_2}f_0]\big\|_{B_j^\sigma}\lesssim 2^{m(1+\beta)}2^{-k}\sup_{s\in I_m}\big\{\|P_k\widetilde{I}_{l,s}[P_{k_1}f_\mu(s),P_{k_2}f_0(s)]\|_{L^2}\\
&+2^m\|P_k\widetilde{I}_{l,s}[P_{k_1}(\partial_sf_\mu)(s),P_{k_2}f_0(s)]\|_{L^2}+2^m\|P_k\widetilde{I}_{l,s}[P_{k_1}f_\mu(s),P_{k_2}(\partial_sf_0)(s)]\|_{L^2}\big\}.
\end{split}
\end{equation*}
Using \eqref{za4}--\eqref{sdL2cont} and \eqref{LinftyBd2}, we estimate
\begin{equation*}
\|P_k\widetilde{I}_{l,s}[P_{k_1}f_\mu(s),P_{k_2}f_0(s)]\|_{L^2}\lesssim 2^{k^+}\|e^{-is\Lambda_\mu}P_{k_1}f_\mu(s)\|_{L^\infty}\|P_{k_2}f_0(s)\|_{L^2}\lesssim \bar{\eps}^22^{-(2+\beta)m}2^{6k_2},
\end{equation*}
and similarly
\begin{equation*}
\begin{split}
&2^m\|P_k\widetilde{I}_{l,s}[P_{k_1}(\partial_sf_\mu)(s),P_{k_2}f_0(s)]\|_{L^2}\lesssim \bar{\eps}^22^{-(4/3+\beta)m}2^{6k_2},\\
&2^m\|P_k\widetilde{I}_{l,s}[P_{k_1}f_\mu(s),P_{k_2}(\partial_sf_0)(s)]\|_{L^2}\lesssim \bar{\eps}^22^{-(4/3+\beta)m}2^{6k_2}.
\end{split}
\end{equation*}
The desired conclusion follows in this case once we recall that $k_2\leq m/20$, see \eqref{vco4}.

{\bf{Case 3.}} Finally, assume that
\begin{equation}\label{vco50}
k_1^+\geq k_2^++\D.
\end{equation}
In this case $k_1\geq \D$, $|k-k_1|\leq 4$. We use the same argument as in {\bf{Case 1}}. As in the proof of \eqref{vco20.5}, and using also that $n=0$ in this case,
\begin{equation}\label{vco51}
\big\|Q_{jk}T_m[f_{j_1,k_1}^\mu,f_{j_2,k_2}^0]\big\|_{B_j^\sigma}\lesssim \bar{\eps}^22^{-(1/2-\beta)|m-j_1|}2^{-j_2/2}2^{4k}.
\end{equation}
This suffices to control the contribution of the pairs $(j_1,j_2)$ with $(1-\beta)|m-j_1|+j_2\geq 8k+\beta m$.

On the other hand, if
\begin{equation}\label{vco52}
(1-\beta)|m-j_1|+j_2\leq 8k+\beta m,
\end{equation}
then we decompose dyadically in modulation. The contribution of low modulations $|\Phi_{\sigma\mu\nu}|\leq 2^{l_0}$ can be estimated using Schur's lemma. As in the proof of \eqref{vc022}, we can estimate
\begin{equation}\label{vco53}
\big\|Q_{jk}T_{m,\leq l_0}[f_{j_1,k_1}^\mu,f_{j_2,k_2}^0]\big\|_{B_j^\sigma}\lesssim \bar{\eps}^22^{(1+\beta)m}2^{-(1+\beta)j_1}2^{j_2}2^{l_0}.
\end{equation}
Notice that this suffices to control the contribution of the pairs $(j_1,j_2)$ as in \eqref{vco52} if
\begin{equation}\label{vco54}
-l_0:=(1+\beta)|m-j_1|+j_2+\beta m.
\end{equation}

On the other hand, for $l\geq l_0$ we integrate by parts in time and estimate, as in \eqref{vco30},
\begin{equation*}
\begin{split}
\big\|Q_{jk}T_{m,l}&[f_{j_1,k_1}^\mu,f_{j_2,k_2}^0]\big\|_{B_j^\sigma}\lesssim 2^{m(1+\beta)}2^{-l}\sup_{s\in I_m}\big\{\|P_k\widetilde{I}_{l,s}[f_{j_1,k_1}^\mu(s),f_{j_2,k_2}^0(s)]\|_{L^2}\\
&+2^m\|P_k\widetilde{I}_{l,s}[(\partial_sf_{j_1,k_1}^\mu)(s),f_{j_2,k_2}^0(s)]\|_{L^2}+2^m\|P_k\widetilde{I}_{l,s}[f_{j_1,k_1}^\mu(s),(\partial_sf_{j_2,k_2}^0)(s)]\|_{L^2}\big\}\\
&\lesssim 2^{m(1+\beta)}2^{-l}\big\{\bar{\eps}^22^{4k}2^{-m(2+\beta)}+\bar{\eps}^22^{-15k}2^{-(1+6\beta)m}+\bar{\eps}^22^{4k}2^{-m(2+\beta)}\big\},
\end{split}
\end{equation*}
where in the last line we used Lemma \ref{PhiLocLem}, the bounds \eqref{sdL2cont2} and \eqref{LinftyBd2}, and the bound
\begin{equation*}
\|\partial_sf_{j_1,k_1}^\mu(s)\|_{L^2}\lesssim \bar{\eps}2^{-k_1^+(N_0-3-|\L_1|-|\alpha_1|)}2^{-m(1+6\beta)},
\end{equation*}
which is obtained by interpolation from the last two bounds in \eqref{sdL2cont}. Therefore
\begin{equation}\label{vco60}
\sum_{-l\leq -l_0}\big\|Q_{jk}T_{m,l}[f_{j_1,k_1}^\mu,f_{j_2,k_2}^0]\big\|_{B_j^\sigma}\lesssim \bar{\eps}^22^{-\beta m}+\bar{\eps}^22^{14k}2^{-m+6\beta m},
\end{equation}
recalling that $-l_0\leq 9k+3\beta m$, see \eqref{vco52} and \eqref{vco54}. The desired conclusion follows from \eqref{vco51}, \eqref{vco53}, and \eqref{vco60}. This completes the proof of the lemma.
\end{proof}

\section{Improved control of the $Z$-norm, III: dispersive interactions}\label{DispInter}

In this section we prove Proposition \ref{BootstrapZNorm2} when $\mu,\nu\in\{e,b,-e,-b\}$. In view of Lemma \ref{Vo1} it suffices to prove that
\begin{equation}\label{gmo1}
\big\|Q_{jk}T_m^{\sigma\mu\nu}[P_{k_1}f_\mu,P_{k_2}f_\nu]\big\|_{B_j^\sigma}\lesssim \bar{\eps}\,^2 2^{-2om-2oj},
\end{equation}
where the pair $(k_1,k_2)$ is fixed and satisfies
\begin{equation}\label{gmo2}
k_1,k_2\in[-(2/3)(m+j)(1+\beta),j/41+\beta m-\D].
\end{equation}

The proof we present here is similar to the proof in \cite[Sections 6,7]{DeIoPa}. It is simpler, however, because we work here in 3 dimensions, as opposed to 2 dimensions, and this leads to more favorable dispersion and decay properties of the solutions. For the sake of completeness we provide all the details in the rest of this section.

As in the previous section, we drop the superscripts $\sigma\mu\nu$ and consider several cases. In many estimates below we use the basic bounds on the functions $f_\mu=f_\mu^{\al_1,\L_1}$ and $f_\nu=f_\nu^{\al_2,\L_2}$
\begin{equation}\label{gmo2.1}
\sup_{|\beta|\leq N_1+4-|\L|-|\alpha|}\|D^{\beta}f_\gamma^{\alpha,\L}(t)\|_{Z_1^\gamma}+\|f_\gamma^{\alpha,\L}(t)\|_{\mathcal{H}^{N_0-1-|\L|-|\alpha|}}\lesssim\bar{\eps},
\end{equation}
and, for any $k\in\mathbb{Z}$,
\beq\label{gmo2.2}
\|P_k(\partial_t f^{\alpha,\L}_\gamma)(t)\|_{L^2}\lesssim
\bar{\eps}\min\big\{2^{3k/2},\,2^{-k^+(N_0-2-|\L|-|\alpha|)}\langle t\rangle^{-1},\,\,2^{-k^+(N_1-2-|\L|-|\alpha|)}\langle t\rangle^{-3/2}\big\},
\eeq
see Proposition \ref{sDeriv}, where $(\gamma,\L,\alpha)\in\{(\mu,\L_1,\alpha_1),(\nu,\L_2,\alpha_2)\}$ and $\langle t\rangle=1+t$. Recall also that $|\L_1|+|\L_2|\leq N_1$ and $|\alpha_1|+|\alpha_2|\leq 4$. We will often use the integration by parts formula \eqref{vco6.2}.

We divide the proof into several lemmas, depending on the relative size of the main parameters. As before, we start with the simpler cases and gradually reduce to the main resonant cases in Proposition \ref{mo10}.

\begin{lemma}\label{mo2}
(Approximate finite speed of propagation) The bound \eqref{gmo1} holds provided that \eqref{gmo2} holds and, in addition,
\begin{equation*}
j\geq \max(-k,m)+\D.
\end{equation*}
\end{lemma}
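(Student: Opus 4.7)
Following the template of Lemma \ref{Vo2}, my plan is to decompose both profiles dyadically in physical space, $f_\mu=\sum_{j_1} f^\mu_{j_1,k_1}$ and $f_\nu=\sum_{j_2} f^\nu_{j_2,k_2}$ as in \eqref{Alx100}, and to treat the pairs $(j_1,j_2)$ separately depending on their size relative to $j$. The overall strategy is identical to the vorticity case in Lemma \ref{Vo2}; what changes is only the available linear bounds, since we no longer have the extra $\delta_0\lesssim\bar\eps\langle t\rangle^{-1}$ smallness nor the weighted $L^2$ control from \eqref{za5}.

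First step (finite speed of propagation). On $\mathrm{supp}\,\phi^{(k)}_j(x)$ one has $|x|\sim 2^j$, while $s\in I_m$ gives $s\le 2^{m+1}\le 2^{j-\D+1}$. Hence the $\xi$-gradient of the phase $x\cdot\xi+s\Phi_{\sigma\mu\nu}(\xi,\eta)$ is $x+s\nabla_\xi\Phi$, of magnitude $\sim 2^j$ because $|s\nabla_\xi\Phi|\lesssim 2^m\ll 2^j$. Treating the amplitude $\varphi_k(\xi)\,\mathfrak{m}^\L_{\sigma\mu\nu}(\xi,\eta)\,\widehat{f^\mu_{j_1,k_1}}(\xi-\eta)$ as smooth in $\xi$ at scale $2^{-\max(j_1,k_1^+,\ldots)}$, Lemma \ref{tech5} with $K\sim 2^j$ and $\eps\sim 2^{-j_1}$ produces a factor $(2^{j-j_1})^{-N}$ per iteration. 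Iterating and performing the symmetric change of variables $\eta\to\xi-\eta$ (which interchanges the roles of $f_\mu$ and $f_\nu$) shows that the total contribution of pairs with $\min(j_1,j_2)<99j/100$ is $\lesssim\bar\eps^{\,2}\,2^{-jN}$ for any $N$, hence negligible.

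Second step (direct estimate for $\min(j_1,j_2)\ge 99j/100$). I follow the computation in \eqref{vco20.5}: by $L^\infty\times L^2$,
\[
\|Q_{jk}T_m[f^\mu_{j_1,k_1},f^\nu_{j_2,k_2}]\|_{B^\sigma_j}\lesssim 2^{(1+\beta)j}\,2^m\,2^{k^+}\sup_{s\in I_m}\|e^{-is\Lambda_\mu}f^\mu_{j_1,k_1}\|_{L^\infty}\|f^\nu_{j_2,k_2}\|_{L^2}.
\]
For bounded $k_1^+$ I insert the non-dispersive branch of \eqref{LinftyBd}, giving $\|e^{-is\Lambda_\mu}f^\mu_{j_1,k_1}\|_{L^\infty}\lesssim\bar\eps\,2^{3k_1/2}2^{-(1+\beta)j_1}$, together with the $Z$-norm bound $\|f^\nu_{j_2,k_2}\|_{L^2}\lesssim\bar\eps\,2^{-(1+\beta)j_2}$; for larger $k_1^+$ or $k_2^+$ I invoke the $\mathcal{H}^{N_0}$ part of \eqref{gmo2.1} to pay for the Bernstein factors. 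Combining these with $j_1,j_2\ge 99j/100$, $m\le j-\D$, and the constraint $k,k_1,k_2\le j/41+\beta m-\D$ from \eqref{gmo2}, the resulting exponent is of the form
\[
(1+\beta)j-(1+\beta)(j_1+j_2)+m+k^++\tfrac{3}{2}k_1^+\le -\beta j+O(j/41)-\D,
\]
which is strictly more negative than $-2oj-2om$ once one exploits $\beta=10^{-6}\gg o=10^{-8}$ and chooses $\D$ large.

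The main obstacle is the delicate numerology in the second step: the Bernstein factor $2^{3k_1/2}$ and the multiplier norm $2^{k^+}$ may together reach $2^{5j/82}$, which has to be absorbed by the marginal gain $2^{-\beta j}$ coming from the $Z$-norm against the $(1+\beta)j$ weight of $B^\sigma_j$ and the $m\le j-\D$ slack. This forces me either to tighten the integration-by-parts threshold from $99j/100$ to $(1-\kappa)j$ for sufficiently small $\kappa$ (easily afforded by taking $N$ large in Lemma \ref{tech5}, since the gain is $2^{-\kappa jN}$), or else to switch to the dispersive branch of \eqref{LinftyBd}, i.e.\ $\bar\eps\,2^{5k_1^+/2}2^{-3m/2}2^{(1/2-\beta)j_1}$, whenever $m$ is so large that the non-dispersive bound degrades; the careful case analysis in $(k^+,m)$ is exactly what I expect to be the technical bottleneck.
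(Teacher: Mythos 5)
Your plan captures the first half of the paper's argument---approximate finite speed of propagation to force $\min(j_1,j_2)$ close to $j$, followed by a direct $L^\infty\times L^2$ estimate using the first branch of \eqref{LinftyBd}---but it stops there, and the numerological obstacle you correctly flag at the end cannot be repaired by either of the two fallbacks you sketch.

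To see why, track the exponent as in the paper's \eqref{gmo5}. With $k_1\le k_2$ it takes the form
\begin{equation*}
(1+\beta)j+m+3k_2^+-4k_1^+-(1+\beta)(j_1+j_2).
\end{equation*}
Using $j_1+j_2\ge 2j(1-\beta/10)$ and $m\le j-\D$, the total $j$-slack one can extract is only about $-\tfrac{4\beta}{5}\,j$. When $k_2^+-k_1^+<\D$ the frequency factor $3k_2^+-4k_1^+\le 3\D$ is bounded and this slack suffices. But when $k_2^+\ge k_1^++\D$, the factor $3k_2^+$ can be as large as $3j/41+O(\beta m)$, which dwarfs $\tfrac{4\beta}{5}j$ since $\beta=10^{-6}$. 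Shrinking the IBP threshold $\kappa$ cannot help: as $\kappa\to 0$ the slack only approaches $-\tfrac{4\beta}{5}j$, it never becomes comparable to $j/41$. Nor does switching to the dispersive branch of \eqref{LinftyBd}: after the finite-speed reduction one has $j_1\gtrsim j\ge m+\D>m$, and for $j_1>m$ the non-dispersive branch is already the sharper of the two, so the switch gains nothing.

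The paper closes this gap with a modulation decomposition that is absent from your proposal. In the surviving bad regime---$k_2^+\ge k_1^++\D$ together with $j_1,j_2\in[j(1-\beta/10),4m/3]$---one has $k_2\ge\D$ and $|k-k_2|\le 4$, and one splits $T_m$ into $T_{m,\le l_0}$ and $T_{m,l}$, $l\ge l_0+1$, with $l_0=-14k-20\beta m$; the low-modulation piece is bounded by Schur's test using the sublevel-set estimate of Lemma \ref{Shur2Lem}, and the high-modulation pieces are handled by integration by parts in time via \eqref{vco6.2}, invoking the $\partial_t f$ bounds in \eqref{gmo2.2}. This Schur-plus-time-IBP step is the essential missing ingredient; without it the direct estimate genuinely grows in $j$ in the high$\times$low case.
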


\begin{proof} We define $f^\mu_{j_1,k_1}$ and $f^\nu_{j_2,k_2}$ as in \eqref{Alx100}. As in the proof of Lemma \ref{Vo2}, integration by parts in $\xi$ together with the change of variables $\eta\to\xi-\eta$ show that the contribution is negligible unless $\min(j_1,j_2)\geq j(1-\beta/10)$. Without loss of generality we may assume that $k_1\leq k_2$. For any $j_1,j_2$, we can estimate
\begin{equation}\label{gmo5}
\big\|Q_{jk}T_m[f_{j_1,k_1}^\mu,f_{j_2,k_2}^\nu]\big\|_{B_j^\sigma}\lesssim \bar{\eps}^22^{j(1+\beta)}2^m 2^{3k_2^+}2^{-4k_1^+}2^{-(1+\beta)j_1}2^{-(1+\beta)j_2}.
\end{equation}
Indeed, this follows by an $L^2\times L^\infty$ estimate, using \eqref{gmo2.1}, the first bound in \eqref{LinftyBd}, and Definition \ref{MainZDef} (we decompose in $n$ and place the function with the larger $n$ in $L^\infty$ in order to gain the favorable factor $2^{-n/2+4\beta n}$ in \eqref{LinftyBd}). The desired conclusion follows unless
\begin{equation}\label{gmo6}
k_2^+\geq k_1^++\D\qquad\text{ and }\qquad j_1,j_2\in[j(1-\beta/10),4m/3].
\end{equation}

Assume now that \eqref{gmo6} holds. In particular, $k_2\geq\D$ and $|k-k_2|\leq 4$. We further decompose our operator in modulation. As in Lemma \ref{Vo5}, with $l_0:=-14k-20\beta m$ we estimate
\begin{equation*}
\begin{split}
\big\|Q_{jk}&T_{m,\leq l_0}[f_{j_1,k_1}^\mu,f_{j_2,k_2}^\nu]\big\|_{B_j^\sigma}\lesssim 2^{j(1+\beta)}2^m\sup_{s\in I_m}\big\|P_kI_{\leq l_0,s}[f_{j_1,k_1}^\mu(s),f_{j_2,k_2}^\nu(s)]\big\|_{L^2}\\
&\lesssim 2^{j(1+\beta)}2^m 2^{k^+}\sup_{s\in I_m}\Big\|\int_{\mathbb{R}^3}\varphi_{\leq l_0}(\Phi_{\sigma\mu\nu}(\xi,\eta))\varphi_k(\xi)\,|\widehat{f_{j_1,k_1}^\mu}(\xi-\eta,s)|\,|\widehat{f_{j_2,k_2}^\nu}(\eta,s)|\,d\eta\Big\|_{L^2_\xi}.
\end{split}
\end{equation*}
We estimate the $L^2$ norm in the expression above using Schur's test. Using Lemma \ref{Shur2Lem}, it follows that
\begin{equation}\label{gmo7}
\begin{split}
\big\|Q_{jk}T_{m,\leq l_0}[f_{j_1,k_1}^\mu,f_{j_2,k_2}^\nu]\big\|_{B_j^\sigma}&\lesssim 2^{j(1+\beta)}2^m 2^{k}\sup_{s\in I_m}(2^{10k}2^{l_0+\beta m})^{1/2}\|\widehat{f_{j_1,k_1}^\mu}(s)\|_{L^2}\|\widehat{f_{j_2,k_2}^\nu}(s)\|_{L^2}\\
&\lesssim \bar{\eps}^22^{j(1+\beta)}2^m\cdot 2^{-8\beta m}2^{-j_1(1-3\beta)}2^{-j_2(1+\beta)}.
\end{split}
\end{equation}

On the other hand, for $l\geq l_0+1$ we integrate by parts in time. Using \eqref{vco6.2} we bound
\begin{equation*}
\begin{split}
\big\|&Q_{jk}T_{m,l}[f_{j_1,k_1}^\mu,f_{j_2,k_2}^\nu]\big\|_{B_j^\sigma}\lesssim 2^{m(1+\beta)}2^{-l}\sup_{s\in I_m}\big\{\|P_k\widetilde{I}_{l,s}[f_{j_1,k_1}^\mu(s),f_{j_2,k_2}^\nu(s)]\|_{L^2}\\
&+2^m\|P_k\widetilde{I}_{l,s}[(\partial_sf_{j_1,k_1}^\mu)(s),f_{j_2,k_2}^\nu(s)]\|_{L^2}+2^m\|P_k\widetilde{I}_{l,s}[f_{j_1,k_1}^\mu(s),(\partial_sf_{j_2,k_2}^\nu)(s)]\|_{L^2}\big\}\\
&\lesssim \bar{\eps}^22^{m(1+\beta)}2^{-l}2^{k}\big\{2^{-j_1(1-3\beta)}2^{-j_2(1+\beta)}+2^{-m/2}2^{-j_2(1+\beta)}+2^{-j_1(1-3\beta)}2^{-20k_2}2^{-50\beta m}\big\},
\end{split}
\end{equation*}
where in the last term we estimated $\|\partial_sf_{j_2,k_2}^\nu(s)\|_{L^2}\lesssim \bar{\eps}2^{-m-50\beta m}2^{-30k_2^+}$ (interpolation between the last two bounds in \eqref{gmo2.2}). Therefore, for $j_1,j_2$ as in \eqref{gmo6} and $l_0=-14k-20\beta m$,
\begin{equation}\label{gmo8}
\sum_{l\geq l_0}\big\|Q_{jk}T_{m,l}[f_{j_1,k_1}^\mu,f_{j_2,k_2}^\nu]\big\|_{B_j^\sigma}\lesssim \bar{\eps}^22^{16k}2^{-m/2+30\beta m}+ \bar{\eps}^22^{-\beta m}.
\end{equation}
The desired conclusion follows from \eqref{gmo7} and \eqref{gmo8}.
\end{proof}

\begin{lemma}\label{mo3}
The bound \eqref{gmo1} holds provided that \eqref{gmo2} holds and, in addition,
\begin{equation*}
j\leq -k+2\D.
\end{equation*}
\end{lemma}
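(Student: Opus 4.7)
The constraint $j\leq -k+2\D$ forces us into one of two sub-regimes: either $k\in[0,2\D]$ with $j\in[0,2\D]$ bounded, or $k\leq 0$ with $j\approx -k$. In both cases, the $B_j^\sigma$-norm factor satisfies $2^{(1+\beta)j}\lesssim 2^{(1+\beta)(-k)+O(\D)}$, and by \eqref{gmo2}, $k$ is bounded below by $-(2/3)(1+\beta)(m+j)\approx -2m$. The plan is to adapt the two-estimate strategy of Lemma \ref{Vo4}, exploiting the fact that now \emph{both} inputs $f_\mu, f_\nu$ are dispersive and hence enjoy the $L^\infty$ decay of \eqref{LinftyBd2}. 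The key inputs I will combine are: (a) the Bernstein-type bilinear bound $\|P_k I_s[f,g]\|_{L^2}\lesssim 2^{3k/2}2^{k^+}\|P_{k_1}f\|_{L^2}\|P_{k_2}g\|_{L^2}$; (b) the dispersive $L^\infty\times L^2$ estimate with decay $2^{-(1+\beta)m}$ from \eqref{LinftyBd2}; and (c) the low-frequency $L^2$ improvement $\|P_{k_i}f_\mu\|_{L^2}\lesssim \bar{\eps}\,2^{(1+\beta)k_i^-}$, obtained by summing the Z-norm bound $\|Q_{j',k_i}f_\mu\|_{L^2}\lesssim \bar{\eps}\,2^{-(1+\beta)j'}$ over $j'\geq -k_i$.

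The argument splits on $k$. When $k\leq -m/100$, I apply (a) with (c), producing an exponent of the form $(1+\beta)(-k)+m+(3/2)k=m+(1/2-\beta)k$ together with decaying contributions in $k_1^+,k_2^+$ from Sobolev; this is bounded by $\bar{\eps}^2 2^{-\beta m}$ for $k$ below the threshold $-2m(1+\beta)/(1-2\beta)$, and the intermediate range is covered by estimate (b) with (c). When $-k\leq m/100$, the factor $2^{(1+\beta)(-k)}$ is at most $2^{(1+\beta)m/100}$, and the direct $L^\infty\times L^2$ dispersion (with the smaller-frequency input in $L^\infty$) yields $\bar{\eps}^2 2^{(1+\beta)m/100}\cdot 2^{-(1+\beta)m}\lesssim \bar{\eps}^2 2^{-\beta m/2}$, more than sufficient. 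In Case (a) of $\mathcal{X}_k$ ($k_1,k_2\approx k$) one checks algebraically that combining (b) and (c) produces the clean $\bar{\eps}^2 2^{-\beta m+k(1/2-\beta)}\leq \bar{\eps}^2 2^{-\beta m}$ for all $k\leq 0$.

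The hard part is the high-high-low configuration (Case (b) of $\mathcal{X}_k$: $k_1\approx k_2=:k_h\geq k+7$) combined with very negative $k$ and \emph{moderate} $k_h\in[-m/100,0]$. Here a direct $L^\infty\times L^2$ estimate leaves a factor $2^{(1+\beta)(-k)}$ that the dispersive decay $2^{-(1+\beta)m}$ cannot absorb when $-k\approx m$. To close this case I plan to integrate by parts in time using \eqref{vco6.2}: since $|\xi|\leq 2^{-\D_0}$ and $|\xi-\eta|,|\eta|\lesssim 1$, Proposition \ref{spaceres} guarantees $|\Phi|\gtrsim 1$, so the modulation index $l$ satisfies $l\geq 0$ and the integration by parts removes the $2^m$ factor from the time integration. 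The resulting boundary term benefits from the low-frequency improvement on $\|P_{k_2}f_\nu\|_{L^2}$, while the derivative terms are controlled by the decay $\|P_{k_h}\partial_s f\|_{L^2}\lesssim \bar{\eps}\,2^{-3m/2}2^{3k_h/2}$ from \eqref{gmo2.2}. After this case split, summing over $(k_1,k_2)$ in the range \eqref{gmo2} (which contains $O((m+j)^2)$ pairs) produces the total decay $\bar{\eps}^2 2^{-2om-2oj}$ required by \eqref{gmo1}.
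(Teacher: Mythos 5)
Your proposal captures the right high-level tools (Bernstein, $L^\infty \times L^2$ dispersion, the Z-norm low-frequency improvement, integration by parts in time), but the case analysis is incomplete and the key mechanism for the genuinely resonant sub-case is missing.

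The central gap concerns the high-high-low configuration with $k_h := k_1 \approx k_2 > 0$, which you do not address. Under \eqref{gmo2}, $k_1, k_2$ can run up to roughly $m/40$, and for $k_h$ positive but bounded (say $k_h \in [1, C(d)]$) the phase \emph{can} vanish: taking, e.g., $(\mu,\nu) = (-e,b)$ with $|\xi|$ small, one has $\Phi \approx 1 - \big(\Lambda_b - \Lambda_e\big)(\eta)$, and $\Lambda_b - \Lambda_e$ passes through $1$ at some fixed radius $|\eta| = r_*(d) > 1$. In that regime your plan of integrating by parts in time alone fails, because $|\Phi|$ is not bounded below and the modulation index $l$ can be as negative as you like. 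The paper's proof addresses exactly this by splitting in modulation at $l_0 = -2k_1^+ - \D$: for high modulation one integrates by parts in time as you propose, but for low modulation the dichotomy in Proposition \ref{spaceres}(i) forces $|\nabla_\eta\Phi| \gtrsim 1$ (since $\min(|\xi|,|\eta|,|\xi-\eta|) \leq 2^{-\D_0}$ and the phase is small), and one integrates by parts in $\eta$ using Lemma \ref{tech5}, yielding negligible contributions when $\max(j_1,j_2) \leq m - \beta m$ and a crude $L^2 \times L^2$ bound otherwise. Your proposal contains no spatial ($\eta$) integration by parts, and without it the low-modulation resonant contributions do not close.

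A secondary but real issue is your invocation of Proposition \ref{spaceres} to assert $|\Phi| \gtrsim 1$. The proposition gives a dichotomy: either $|\Phi| \gtrsim (1+|\xi|+|\eta|)^{-1}$ \emph{or} $|\nabla_\eta\Phi| \gtrsim (1+|\xi|+|\eta|)^{-3}$. It does not, as stated, deliver a phase lower bound. In the restricted sub-regime $k_h \leq 0$ one can indeed check by hand that $|\Phi| \gtrsim 1$ (using $\Lambda_e, \Lambda_b \geq 1$ and $(\Lambda_b - \Lambda_e)(\eta) < 1$ for $|\eta| \leq 1$), but that is a separate elementary argument, not a consequence of the cited proposition; and as noted above, it fails once $k_h$ exceeds a constant threshold. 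Finally, the paper's first reduction uses the dispersive $L^\infty$ bound $2^{-3m/2}2^{(1/2-\beta)j_1}$ from \eqref{LinftyBd} (spatially localized in $j_1$), which immediately reduces the lemma to the borderline regime \eqref{gmo9}; your use of the summed estimate \eqref{LinftyBd2} with $2^{-(1+\beta)m}$ decay is strictly weaker and leaves a larger set of cases open, which is part of why your $k$-threshold arithmetic (the claim that $L^2 \times L^2$ closes below $-2m(1+\beta)/(1-2\beta)$ versus the lower bound $k \gtrsim -(4/3)m$ from \eqref{gmo2}) leaves an unfilled middle range that the vague appeal to ``estimate (b) with (c)'' does not transparently cover.
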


\begin{proof} Clearly $k\leq 2\D$, thus $|k_1^+-k_2^+|\leq 3\D$. We define $f^\mu_{j_1,k_1}$ and $f^\nu_{j_2,k_2}$ as before and estimate
\begin{equation*}
\big\|Q_{jk}T_m[f_{j_1,k_1}^\mu,f_{j_2,k_2}^\nu]\big\|_{B_j^\sigma}\lesssim \bar{\eps}^22^{j(1+\beta)}2^m \cdot 2^{-3m/2}2^{-\max(j_1,j_2)(1/2-10\beta)}.
\end{equation*}
Indeed, this follows by estimating the term with the smaller $j$ in $L^\infty$ and using the last bound in \eqref{LinftyBd}, and the term with the larger $j$ in $L^2$ and using the Definition \ref{MainZDef}. The desired conclusion follows unless
\begin{equation}\label{gmo9}
[m+\max(j_1,j_2)](1/2-20\beta)+3\D\leq j\leq -k+2\D.
\end{equation}

Assume now that \eqref{gmo9} holds. In particular $k\leq -\D$. We consider first the high modulations, $l\geq l_0+1$, where $l_0:=-2k_1^+-\D$. Using \eqref{vco6.2} and \eqref{gmo2.1}--\eqref{gmo2.2} we estimate
\begin{equation*}
\begin{split}
\big\|&Q_{jk}T_{m,l}[f_{j_1,k_1}^\mu,f_{j_2,k_2}^\nu]\big\|_{B_j^\sigma}\lesssim 2^{j(1+\beta)}2^{-l}\sup_{s\in I_m}\big\{2^{3k/2}\|f_{j_1,k_1}^\mu(s)\|_{L^2}\|f_{j_2,k_2}^\nu(s)\|_{L^2}\\
&+2^m2^{3k/2}\|\partial_sf_{j_1,k_1}^\mu(s)\|_{L^2}\|f_{j_2,k_2}^\nu(s)\|_{L^2}+2^m2^{3k/2}\|f_{j_1,k_1}^\mu(s)\|_{L^2}\|\partial_sf_{j_2,k_2}^\nu(s)\|_{L^2}\big\}\\
&\lesssim \bar{\eps}^22^{j(1+\beta)}2^{-l}2^{3k/2}2^{-4k_1^+}.
\end{split}
\end{equation*}
Deduce now that
\begin{equation}\label{gmo10}
\sum_{l\geq-2k_1^+-\D+1}\big\|Q_{jk}T_{m,l}[f_{j_1,k_1}^\mu,f_{j_2,k_2}^\nu]\big\|_{B_j^\sigma}\lesssim \bar{\eps}^22^{j(1+\beta)}2^{3k/2}.
\end{equation}
and since $2^{3k/2}2^{j(1+\beta)}\lesssim 2^{k(1/2-\beta)}\lesssim 2^{-m/6-\beta j}$ this takes care of the large modulation case.

To estimate the contribution of small modulations we use first Proposition \ref{spaceres} (i). In particular we examine the integral defining $\mathcal{F}\{P_kT_{m,\leq l_0}[f_{j_1,k_1}^\mu,f_{j_2,k_2}^\nu]\}$ and notice that this integral is nontrivial only when $|\eta|+|\xi-\eta|\leq 2^{\D/2}$. Thus $k_1,k_2\in[-\D,\D]$ and, more importantly $|\nabla_\eta\Phi_{\sigma\mu\nu}(\xi,\eta)|\gtrsim 1$ in the support of the integral. Therefore, using integration by parts in $\eta$ (with Lemma \ref{tech5}),
\begin{equation*}
\|\mathcal{F}P_kT_{m,\leq l_0}[f_{j_1,k_1}^\mu,f_{j_2,k_2}^\nu]\|_{L^\infty}\lesssim \bar{\eps}^22^{-2m}\qquad\text{ if }\qquad \max(j_1,j_2)\leq m-\beta m.
\end{equation*}
On the other hand, if $\max(j_1,j_2)\geq m-\beta m$ then we can estimate directly
\begin{equation*}
\big\|Q_{jk}T_{m\leq l_0}[f_{j_1,k_1}^\mu,f_{j_2,k_2}^\nu]\big\|_{B_j^\sigma}\lesssim 2^{j(1+\beta)}2^m 2^{3k/2}\|f_{j_1,k_1}^\mu\|_{L^2}\|f_{j_2,k_2}^\nu\|_{L^2}\lesssim \bar{\eps}^22^{j(1+\beta)}2^{3k/2}2^{10\beta m}.
\end{equation*}
Therefore, assuming \eqref{gmo9},
\begin{equation}\label{gmo11}
\big\|Q_{jk}T_{m\leq l_0}[f_{j_1,k_1}^\mu,f_{j_2,k_2}^\nu]\big\|_{B_j^\sigma}\lesssim \bar{\eps}^22^{k/4}.
\end{equation}
The desired bound when \eqref{gmo9} is satisfied follows from \eqref{gmo10} and \eqref{gmo11}.
\end{proof}

We can now estimate the contribution of large modulations.

\begin{lemma}\label{mo6}
Assume that \eqref{gmo2} holds and, in addition,
\begin{equation*}
-k+2\D\leq j\leq m+\D.
\end{equation*}
Then
\begin{equation}\label{gmo11.6}
\sum_{l\geq -\D-10\max(k_1^+,k_2^+)-200\beta m}\big\|Q_{jk}T_{m,l}[P_{k_1}f_\mu,P_{k_2}f_\nu]\big\|_{B_j^\sigma}\lesssim \bar{\eps}\,^2 2^{-2om-2oj}.
\end{equation}
\end{lemma}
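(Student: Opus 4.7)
The plan is to integrate by parts in time using formula \eqref{vco6.2} and to estimate the three resulting pieces individually, gaining a factor $2^{-l}$ from the modulation inversion at the cost of the $2^m$ factor from the time integration (in the terms containing $\partial_s$). Writing $\overline{k}:=\max(k_1^+,k_2^+)$ and $l_0:=-\D-10\overline{k}-200\beta m$, our task is to sum $\|Q_{jk}T_{m,l}[P_{k_1}f_\mu,P_{k_2}f_\nu]\|_{B^\sigma_j}$ over $l\geq l_0$. Note that $j(1+\beta)\leq(m+\D)(1+\beta)$ by hypothesis, while Lemma \ref{Vo1} (reflected in \eqref{gmo2}) already guarantees $\overline{k}\leq j/41+\beta m$, which limits the losses from $2^{\overline{k}}$ factors.

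For the boundary-type term $2^{-l}\|P_k\widetilde{I}_{l,s}[P_{k_1}f_\mu(s),P_{k_2}f_\nu(s)]\|_{L^2}$, I would use Lemma \ref{L1easy} to estimate it by $2^{-l}2^{k^+}\|e^{-is\Lambda_\mu}P_{k_1}f_\mu(s)\|_{L^\infty}\|P_{k_2}f_\nu(s)\|_{L^2}$, placing the factor of higher $n$ in $L^\infty$ to exploit the bound \eqref{LinftyBd2} and yielding a time decay $2^{-(1+\beta)m}$ together with a loss of at most $2^{(5/2)\overline{k}}$. For the two terms carrying $\partial_s f$, I would use the $L^2$ bounds from \eqref{sdL2cont}, interpolating between the second and third estimates to get $\|P_{k_i}\partial_s f_{\gamma}(s)\|_{L^2}\lesssim \bar{\eps}\,2^{-(1+\rho)m}2^{-\overline{k}(N_0-N_1-5)}$ for a fixed small $\rho>0$, paired via Lemma \ref{L1easy} with the pointwise bound \eqref{LinftyBd2} on the remaining factor. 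Altogether each of the three pieces yields a bound of the form $\bar{\eps}\,^2\,2^{j(1+\beta)}2^{-l}\,2^{k^+}\,2^{-(1+\rho)m}\,2^{C\overline{k}}$ for some fixed $C$.

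Summing the geometric series over $l\geq l_0$ produces $2^{-l_0}$, and so the total bound becomes $\bar{\eps}\,^2\,2^{j(1+\beta)}2^{-l_0}\,2^{k^+}\,2^{-(1+\rho)m}\,2^{C\overline{k}}$. Using $j\leq m+\D$, $k\leq j$, $\overline{k}\leq j/41+\beta m$, and substituting $-l_0=\D+10\overline{k}+200\beta m$, this quantity is controlled by $\bar{\eps}\,^2\,2^{-\rho m/2}$ provided $200\beta+C/41$ is small enough compared to $\rho$; since $\beta=10^{-6}$ and $\rho$ can be taken to be, e.g., $1/10$ from interpolation, this is comfortably true. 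The resulting $2^{-\rho m/2}$ is much stronger than the required $2^{-2om-2oj}$, since $o=10^{-8}$ and $j\leq m+\D$.

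The main obstacle is bookkeeping the frequency losses: the dispersive bound \eqref{LinftyBd2} loses a $2^{2k^+}$ Sobolev power, and the $\partial_s f$ estimate needs enough $L^2$ decay in $k^+$ to compensate for the $2^{10\overline{k}}$ built into $l_0$. This is exactly where the gap $N_0-N_1$ is used, ensuring that the interpolated bound on $\partial_s f$ carries sufficient Sobolev decay; no genuine resonance analysis is needed in this lemma because the restriction $l\geq l_0$ bypasses the space-time resonant regime, which is the subject of the subsequent sublemmas handling the small-modulation case.
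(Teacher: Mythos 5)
The overall strategy — integrate by parts in time via \eqref{vco6.2}, estimate each of the three resulting pieces, sum over $l\geq l_0$ — is the same as the paper's, and the observation that $\overline{k}\lesssim m/41+\beta m$ controls the Sobolev losses is also what the paper uses. However, there are two genuine problems in the execution.

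First, you cannot apply Lemma \ref{L1easy} to bound $\|\widetilde{I}_{l,s}[\cdots]\|_{L^2}$ by $2^{k^+}\|e^{-is\Lambda_\mu}P_{k_1}f_\mu\|_{L^\infty}\|P_{k_2}f_\nu\|_{L^2}$, because the multiplier of $\widetilde{I}_{l,s}$ carries the phase-localization factor $\widetilde{\varphi}_l(\Phi_{\sigma\mu\nu}(\xi,\eta))$, which is supported in a thin neighborhood $\{|\Phi|\approx 2^l\}$ of a hypersurface in $(\xi,\eta)$-space and does not have an $L^1$-bounded inverse Fourier transform uniformly in $l$. This is precisely the point of Lemma \ref{PhiLocLem}: it shows that, modulo a negligible error $2^{-10m}\|f\|_{L^2}\|g\|_{L^2}$, localizing in modulation is a bounded operation from $L^q\times L^r$ to $L^2$ with $1/q+1/r=1/2$. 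The paper uses it here for exactly this reason; without it the $L^\infty\times L^2$ step is unjustified.

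Second, and more seriously, the claimed decay $2^{-(1+\beta)m}$ for the boundary-type piece (no $\partial_s$) is not strong enough to close the argument. The prefactor $2^{j(1+\beta)}\leq 2^{(m+\D)(1+\beta)}$ coming from the $B_j^\sigma$ norm exactly cancels this decay, leaving no gain in $m$; after summing over $l\geq l_0$ you pick up $2^{-l_0}\approx 2^{10\overline{k}+200\beta m}$, and the resulting expression \emph{grows} in $m$. Your own bookkeeping formula $\bar{\eps}^2 2^{j(1+\beta)}2^{-l}2^{k^+}2^{-(1+\rho)m}2^{C\overline{k}}$ tacitly assumes $\rho>200\beta+C/41$, but your argument for the boundary term only delivers $\rho=\beta$, far below what is needed. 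What the paper actually does for this piece is decompose $P_{k_1}f_\mu$ and $P_{k_2}f_\nu$ into $f^\mu_{j_1,k_1}$, $f^\nu_{j_2,k_2}$ (as in \eqref{Alx100}), place the piece with the smaller $j$ in $L^\infty$ using the finer bound \eqref{LinftyBd} rather than the coarse \eqref{LinftyBd2}, and place the other in $L^2$ using the $Z_1$-norm. The second alternative in \eqref{LinftyBd}, $2^{5k^+/2}2^{-3m/2}2^{(1/2-\beta)j_1}2^{4\beta n}$, combined with the $L^2$ decay $2^{-(1+\beta)j_2}$ for $j_2\geq j_1$, sums in $(j_1,j_2)$ to give the sup bounded by $\bar{\eps}^2 2^{8\overline{k}}2^{-3m/2}$. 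It is this $2^{-3m/2}$ (instead of $2^{-(1+\beta)m}$) that makes the sum over $l\geq l_0$ and the $\overline{k}$-losses harmless. Without the $j$-decomposition the lemma does not close.

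As for the $\partial_s f$ pieces, your interpolation idea is on the right track, but there you again need Lemma \ref{PhiLocLem} rather than Lemma \ref{L1easy}, for the same reason as above.
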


\begin{proof} Using \eqref{vco6.2}, Lemma \ref{PhiLocLem}, \eqref{LinftyBd}, and \eqref{gmo2.1}--\eqref{gmo2.2} we estimate
\begin{equation*}
\begin{split}
\big\|&Q_{jk}T_{m,l}[P_{k_1}f_\mu,P_{k_2}f_\nu]\big\|_{B_j^\sigma}\lesssim 2^{m(1+\beta)}2^{-l}\sup_{s\in I_m}\big\{\|P_k\widetilde{I}_{l,s}[P_{k_1}f_\mu(s),P_{k_2}f_\nu(s)]\|_{L^2}\\
&+2^m\|P_k\widetilde{I}_{l,s}[P_{k_1}(\partial_sf_\mu)(s),P_{k_2}f_\nu(s)]\|_{L^2}+2^m\|P_k\widetilde{I}_{l,s}[P_{k_1}f_\mu(s),P_{k_2}(\partial_sf_\nu)(s)]\|_{L^2}\big\}\\
&\lesssim \bar{\eps}^22^{-l}2^{8\max(k_1^+,k_2^+)}2^{-m/2+\beta m}.
\end{split}
\end{equation*}
This gives \eqref{gmo11.6}, since $\max(k_1,k_2)\leq m/41+\beta m$.
\end{proof}

\begin{lemma}\label{mo7}
Let $\overline{k}:=\max(k_1^+,k_2^+)$. Assume that \eqref{gmo2} holds and, in addition,
\begin{equation*}
-k+2\D\leq j\leq m+\D.
\end{equation*}
Then
\begin{equation}\label{gmo20}
\big\|Q_{jk}T_{m,\leq -\D-10\overline{k}-200\beta m}[P_{k_1}f_\mu,P_{k_2}f_\nu]\big\|_{B_j^\sigma}\lesssim \bar{\eps}\,^2 2^{-4om}
\end{equation}
provided that
\begin{equation}\label{gmo21}
\mu=-\nu\qquad\text{ or }\qquad\min(k,k_1,k_2)\leq -\D/2\qquad\text{ or }\qquad\max(k,k_1,k_2)\geq \D/2.
\end{equation}
\end{lemma}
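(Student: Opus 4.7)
By Proposition \ref{spaceres}(i), the hypothesis \eqref{gmo21} together with the frequency support $|\xi|,|\eta|,|\xi-\eta|\lesssim 2^{\overline{k}}$ yields the pointwise dichotomy $|\Phi_{\sigma\mu\nu}(\xi,\eta)|\gtrsim 2^{-\overline{k}}$ or $|\nabla_\eta\Phi_{\sigma\mu\nu}(\xi,\eta)|\gtrsim 2^{-3\overline{k}}$. Writing $l_0:=-\D-10\overline{k}-200\beta m$, the cutoff $\varphi_{\leq l_0}(\Phi)$ forces $|\Phi|\leq 2^{l_0}\ll 2^{-\overline{k}}$ on the support of the integrand (for $\D$ chosen large enough), which rules out the first alternative. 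Hence on the entire support of the integrand one has the uniform lower bound $|\nabla_\eta\Phi_{\sigma\mu\nu}|\gtrsim 2^{-3\overline{k}}$.

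The plan is to exploit this lower bound via integration by parts in $\eta$. I decompose both inputs dyadically in physical space, $P_{k_\iota}f_\iota=\sum_{j_\iota}f^\iota_{j_\iota,k_\iota}$ for $\iota\in\{\mu,\nu\}$, and estimate each pair $(j_1,j_2)$ separately. For fixed $s\in I_m$, Lemma \ref{tech5} applied to the $\eta$-integral with $K=s\sim 2^m$ and $\epsilon\sim 2^{-3\overline{k}}$ produces a net gain $(K\epsilon)^{-1}\sim 2^{-m+3\overline{k}}$ per integration by parts. The $\eta$-derivatives on the amplitude cost at most $2^{\max(j_1,j_2)}$ when they fall on the profiles (using \eqref{FLinftybdDER}) and at most $2^{-l_0}\approx 2^{\D+10\overline{k}+200\beta m}$ when they fall on $\varphi_{\leq l_0}(\Phi)$. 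Using the bound $\overline{k}\leq m/41+\beta m$ from \eqref{gmo2}, the corresponding per-IBP factors are respectively $\lesssim 2^{-m+\max(j_1,j_2)}$ and $\lesssim 2^{-m+10\overline{k}+200\beta m+\D}\lesssim 2^{-m/2}$.

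Consequently, for pairs with $\max(j_1,j_2)\leq m-\beta m$, iterating IBP a fixed large number of times yields decay $\lesssim 2^{-10m}$, which trivially absorbs the $2^{j(1+\beta)}2^m$ prefactor from the definition of $\|\cdot\|_{B^\sigma_j}$ and the $s$-integration. For the remaining pairs with $\max(j_1,j_2)\geq m-\beta m$, direct estimation using the $Z$-norm bound $\|f^\iota_{j,k}\|_{L^2}\lesssim\bar{\eps}\, 2^{-(1+\beta)j}$ from \eqref{Alx101} on the large-$j$ factor, the dispersive bound \eqref{LinftyBd} on the small-$j$ factor, and Lemma \ref{PhiLocLem} to handle the modulation cutoff, yields the desired bound $\bar{\eps}^{\,2}\,2^{-4om}$.

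The main obstacle is the bookkeeping needed to balance the modulation-cutoff losses against the non-stationary phase gain. The specific form $l_0=-\D-10\overline{k}-200\beta m$ is engineered precisely so that the modulation-cutoff derivative losses are strictly dominated by the IBP gain throughout the allowed range $\overline{k}\leq m/41+\beta m$; it is this narrow margin, in fact, that dictates the exponent $200\beta$ in the threshold defining $l_0$ in the statement of the lemma.
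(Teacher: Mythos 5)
Your opening move is correct and matches the paper: Proposition \ref{spaceres}(i), together with the hypothesis \eqref{gmo21} and the cutoff to modulations $\leq 2^{l_0}$ with $l_0:=-\D-10\overline{k}-200\beta m$, forces the uniform lower bound $|\nabla_\eta\Phi|\gtrsim 2^{-3\overline{k}}$ on the support of the integrand. And, as in the paper, integration by parts in $\eta$ via Lemma \ref{tech5} does indeed kill the contribution of the pairs $(j_1,j_2)$ with $\max(j_1,j_2)$ sufficiently below $m$ (the paper's threshold is $m-\beta m-3\overline{k}$; your $m-\beta m$ is close enough, and your bookkeeping of the derivative costs is right in spirit, even if the precise $(K\eps)^{-1}$ normalization you quote would need the rescaling $f=2^{3\overline{k}}\Phi$ to literally match the hypotheses of Lemma \ref{tech5}).

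The gap is in your treatment of the complementary pairs with $\max(j_1,j_2)\geq m-\beta m$. You propose to close this with an $L^2\times L^\infty$ estimate — dispersive $L^\infty$ on the lower-$j$ input, $Z$-norm $L^2$ on the higher-$j$ input — invoking Lemma \ref{PhiLocLem} to discard the modulation cutoff. But Lemma \ref{PhiLocLem} only shows the cutoff is harmless; it extracts \emph{no gain} from the smallness of $|\Phi|\leq 2^{l_0}$. Tracking the exponents when $j_1\approx j_2\approx m$: the prefactor $2^{j(1+\beta)}2^m\lesssim 2^{(2+\beta)m}$, the dispersive $L^\infty$ bound at $j_1\approx m$ is $\approx 2^{-(1+\beta)m}$, and the $B^\sigma_j$-summed $L^2$ bound at $j_2\approx m$ is $\approx 2^{-(1-3\beta)m}$, for a net $\approx 2^{3\beta m}$ — this \emph{grows} and cannot possibly give $\bar{\eps}^2 2^{-4om}$ (recall $o=10^{-8}\ll\beta=10^{-6}$). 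The ingredient you are missing is the Schur-test/sublevel-set estimate (Lemma \ref{Shur2Lem}, and its sharpened version \eqref{gmo52} in the unequal-frequency case), which is the mechanism that actually converts the smallness $|\Phi|\leq 2^{l_0}$ into a factor $\approx 2^{l_0/2}$ and thereby produces the decay — indeed, this Schur gain is the very reason the threshold $l_0=-\D-10\overline{k}-200\beta m$ appears in the statement. Relatedly, you do not split into the subcases $|k_1^+-k_2^+|\leq\D$ and $|k_1^+-k_2^+|\geq\D$: in the latter case the paper needs the refined sublevel-set bound \eqref{gmo52}, and in addition uses that the phase identities force $\sigma=\nu=b$, $\mu=\pm e$ there (so $|\nabla_\eta\Phi|\gtrsim 1$). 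As written, your proposal does not close the case of two near-extremal physical localizations, which is precisely where the modulation cutoff must be exploited.
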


\begin{proof}  Using Proposition \ref{spaceres} (i) it follows that $|\nabla_\eta \Phi_{\sigma\mu\nu}(\xi,\eta)|\gtrsim 2^{-3\overline{k}}$ in the support of the integral defining $\mathcal{F}\{P_kT_{m,\leq -\D-10\overline{k}-200\beta m}[P_{k_1}f_\mu,P_{k_2}f_\nu]\}$. We define $f^\mu_{j_1,k_1}$ and $f^\nu_{j_2,k_2}$ as before and notice that the contribution of the components for which $\max(j_1,j_2)\leq m-\beta m-3\overline{k}$ is negligible, using integration by parts in $\eta$ (with Lemma \ref{tech5}).

We consider two cases:

{\bf{Case 1.}} Assume first that
\begin{equation}\label{gmo40}
|k_1^+-k_2^+|\leq\D,\qquad\max(j_1,j_2)\geq m-\beta m-3\overline{k}.
\end{equation}
In this case we do not lose derivatives. Assuming, without loss of generality, that $j_1\leq j_2$ we estimate first
\begin{equation}\label{gmo41}
\begin{split}
\big\|Q_{jk}&T_{m,\leq -\D-10\overline{k}-50\beta m}[f_{j_1,k_1}^\mu,f_{j_2,k_2}^\nu]\big\|_{B_j^\sigma}\\
&\lesssim 2^{j(1+\beta)}2^m 2^{\overline{k}}\big\{\sup_{s\in I_m,\,|t-s|\leq 2^{m/2}}\|e^{-it\Lambda_\mu}f_{j_1,k_1}^\mu(s)\|_{L^\infty}\|f_{j_2,k_2}^\nu(s)\|_{L^2}+2^{-8m}\big\}\\
&\lesssim 2^{m(1+\beta)}2^m\cdot 2^{-4\overline{k}}2^{-3m/2}2^{(1/2+3\beta)j_1}2^{-(1-3\beta)j_2}+2^{-4m},
\end{split}
\end{equation}
where we used Lemma \ref{PhiLocLem} and the second estimate in \eqref{LinftyBd}. This suffices to bound the contribution of the components with $j_1\leq m-20\beta m$ and $j_2\geq m-\beta m-3\overline{k}$.

On the other hand, if $j_1\geq m-20\beta m$ then, using Schur's test and Lemma \ref{Shur2Lem},
\begin{equation*}
\begin{split}
\big\|Q_{jk}T_{m,\leq l_0}[f_{j_1,k_1}^\mu,f_{j_2,k_2}^\nu]\big\|_{B_j^\sigma}&\lesssim 2^{m(1+\beta)}2^m\cdot \sup_{s\in I_m}2^{\overline{k}}(2^{l_0+\beta m}2^{10\overline{k}})^{1/2}\|\widehat{f_{j_1,k_1}^\mu}(s)\|_{L^2}\|\widehat{f_{j_2,k_2}^\nu}(s)\|_{L^2}\\
&\lesssim 2^{-\beta m-\beta j_2},
\end{split}
\end{equation*}
provided that $l_0=-\D-10\overline{k}-200\beta m$. The desired bound \eqref{gmo20} follows using also \eqref{gmo11.6}.

{\bf{Case 2.}} Assume now that
\begin{equation}\label{gmo50}
|k_1^+-k_2^+|\geq\D\,,\qquad\max(j_1,j_2)\geq m-\beta m-3\overline{k}.
\end{equation}
We may assume that $k_2^+-k_1^+\geq\D$ and, in particular $k_2\geq \D$, $|k-k_2|\leq 4$. In this case we examine the phase $\Phi_{\sigma\mu\nu}(\xi,\eta)=\Lambda_\sigma(\xi)-\Lambda_\mu(\xi-\eta)-\Lambda_\nu(\eta)$. Notice that
\begin{equation*}
\sqrt{1+a^2}+\sqrt{1+b^2}-\sqrt{1+(a+b)^2}\geq\sqrt{1+a^2}-a\geq (1+a)^{-1}/2
\end{equation*}
for any $a\leq b\in[0,\infty)$. Recalling that $\Lambda_e=\sqrt{1+d|\nabla|^2}$, $\Lambda_b=\sqrt{1+|\nabla|^2}$, $d\in(0,1)$, it is easy to see that the operator is nontrivial only when
\begin{equation}\label{gmo51}
\nu=\sigma=b,\qquad \mu=\pm e,\qquad \Phi_{\sigma\mu\nu}(\xi,\eta)=\Lambda_b(\xi)\pm\Lambda_e(\xi-\eta)-\Lambda_b(\eta).
\end{equation}

In particular, $|\nabla_\eta\Phi_{\sigma\mu\nu}(\xi,\eta)|\gtrsim 1$ in the support of the integral defining our operator. Therefore, using integration by parts in $\eta$ (Lemma \ref{tech5}), the contribution is negligible unless $\max(j_1,j_2)\geq m-\beta m$. The same $L^2\times L^\infty$ estimate as in \eqref{gmo41}, using the $L^2$ norm on the term with the higher $j$ and the $L^\infty$ norm on the term with the lower $j$, gives the desired bound unless
\begin{equation}\label{gmo51.5}
j_1\in [m-20\beta m-8k_1^+,2m]\qquad \text{ and }\qquad j_2\in [m-20\beta m-8k_2^+,2m].
\end{equation}
It remains to prove that, for $j_1$ and $j_2$ as in \eqref{gmo51.5},
\begin{equation}\label{gmo51.7}
\big\|Q_{jk}T_{m,\leq -\D-10\overline{k}-200\beta m}[f^\mu_{j_1,k_1},f^\nu_{j_2,k_2}]\big\|_{B_j^\sigma}\lesssim \bar{\eps}\,^2 2^{-5om}.
\end{equation}

Since $|\nabla_\eta\Phi_{\sigma\mu\nu}(\xi,\eta)|\gtrsim 1$ we also have stronger bounds on sublevel sets (compare with \eqref{cas4}). More precisely, combining (the proofs of) Lemma \ref{lemma00} and Lemma \ref{Shur2Lem}, we have that for any $\eps>0$
\begin{equation}\label{gmo52}
\sup_{\xi\in\mathbb{R}^3}\int_{\mathbb{R}^3}\mathbf{1}_{E_\eps}(\xi,\eta)\,d\eta+\sup_{\eta\in\mathbb{R}^3}\int_{\mathbb{R}^3}\mathbf{1}_{E_\eps}(\xi,\eta)\,d\xi\lesssim \eps 2^{3k_1^+},
\end{equation}
where, with $k\geq k_1^++\D-10$ and $\Phi_{\sigma\mu\nu}(\xi,\eta)=\Lambda_b(\xi)\pm\Lambda_e(\xi-\eta)-\Lambda_b(\eta)$ as before,
\begin{equation}\label{gmo53}
E_\eps:=\{(\xi,\eta)\in\mathbb{R}^3\times\mathbb{R}^3:\,|\xi|,|\eta|\in[2^{k-8},2^{k+8}],\,|\xi-\eta|\leq 2^{k_1+8},\,|\Phi_{\sigma\mu\nu}(\xi,\eta)|\leq\eps\}.
\end{equation}
Therefore with $l_0=-\D-10\overline{k}-200\beta m$, we can improve slightly the Schur's lemma argument:
\begin{equation*}
\begin{split}
\big\|Q_{jk}T_{m,\leq l_0}[f_{j_1,k_1}^\mu,f_{j_2,k_2}^\nu]\big\|_{B_j^\sigma}&\lesssim 2^{m(1+\beta)}2^m\cdot \sup_{s\in I_m}2^{\overline{k}}(2^{l_0}2^{3k_1^+})^{1/2}\|\widehat{f_{j_1,k_1}^\mu}(s)\|_{L^2}\|\widehat{f_{j_2,k_2}^\nu}(s)\|_{L^2}\\
&\lesssim 2^{-\beta m-\beta j_2}.
\end{split}
\end{equation*}
The desired bound \eqref{gmo20} follows in this case as well.
\end{proof}

\subsection{Space-time resonant interactions} In view of Lemmas \ref{mo2}, \ref{mo3}, \ref{mo6}, and \ref{mo7}, to complete the proof of \eqref{gmo1} it remains prove the following proposition:

\begin{proposition}\label{mo10}
For $\sigma\in\{e,b\}$ and $\mu,\nu\in\{e,b,-e,-b\}$, $\mu\neq-\nu$, we have
\begin{equation}\label{nj1}
\big\|Q_{jk}T_{m,\leq -\D}^{\sigma\mu\nu}[f_{j_1,k_1}^\mu,f_{j_2,k_2}^\nu]\big\|_{B_j^\sigma}\lesssim \bar{\eps}^2 2^{-5om},
\end{equation}
provided that
\begin{equation}\label{nj2}
k,k_1,k_2\in[-\D/2,\D/2],\qquad \max(j_1,j_2)\leq 2m,\qquad \text{ and }\qquad 3\D/2\leq j\leq m+\D.
\end{equation}
As before, we assume that $t\in[0,T]$ is fixed, $m\in[0,L+1]$, $(k,j),(k_1,j_1),(k_2,j_2)\in\mathcal{J}$, $f_\mu=f^{\alpha_1,\L_1}_{\mu}$, $f_\nu=f^{\alpha_2,\L_2}_{\nu}$, $|\L_1|+|\L_2|\leq N_1$, $|\alpha_1|+|\alpha_2|\leq 4$, and
\begin{equation*}
f_{j_1,k_1}^\mu=P_{[k_1-2,k_1+2]}Q_{j_1k_1}f_\mu,\qquad f_{j_2,k_2}^\nu=P_{[k_2-2,k_2+2]}Q_{j_2k_2}f_\nu.
\end{equation*}
\end{proposition}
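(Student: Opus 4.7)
The plan is to decompose the output via the operators $A_{n,(j)}^\sigma$, $n \in [0, j+1]$, built into the definition of the $B_j^\sigma$ norm, and to establish for each $n$ the pointwise bound
\[
2^{(1+\beta)j - 4\beta n}\|A_{n,(j)}^\sigma Q_{jk} T_{m,\leq -\D}^{\sigma\mu\nu}[f^\mu_{j_1,k_1}, f^\nu_{j_2,k_2}]\|_{L^2} \lesssim \bar{\eps}^{\,2} 2^{-5om}.
\]
For $\sigma = e$ only $n = 0$ contributes since $\Psi_e^\dagger \geq 10$, while for $\sigma = b$ the operator $A_n^b$ localizes $\xi$ to an annulus of thickness $\sim 2^{-n}$ around the space-time resonance spheres $|\xi| = \gamma_{1,2}$. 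Since $\mu \neq -\nu$, Proposition \ref{spaceres}(ii) provides the smooth space-resonance map $\eta = p(\xi)$, and near $p(\xi)$ one has the Taylor expansion $\Phi(\xi,\eta) = \Psi(\xi) + O(|\eta - p(\xi)|^2)$.

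First I would integrate by parts in $\eta$, using both $\nabla_\eta$ (via Lemma \ref{tech5}) and the rotation vector fields $\Omega_\eta$ (via Lemma \ref{RotIBP}), to reduce matters, up to a negligible remainder, to integrands supported in a tube $\{|\eta - p(\xi)| \lesssim 2^{-m/2 + \beta m}\}$ around the space-resonance point. In that tube the constraint $|\Phi(\xi,\eta)| \leq 2^{-\D}$ combined with the Taylor expansion forces $|\Psi_\sigma^\dagger(\xi)|$ to be small, so that only relatively large $n$ contribute nontrivially in the $A_n^\sigma$ decomposition. Inputs are simultaneously refined via the decomposition $\widehat{f^\mu_{j_1,k_1}} = \sum_{n_1} \widehat{f_{j_1,k_1,n_1}}$ from \eqref{Alx100}, so the analysis respects the resonance geometry on both sides.

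Next I would further dyadically decompose in modulation, $T_{m,\leq -\D}^{\sigma\mu\nu} = \sum_{l \leq -\D} T_{m,l}^{\sigma\mu\nu}$. For relatively larger modulations $l \in [-(1 - \beta/4) m, -\D]$ I would integrate by parts in time via \eqref{vco6.2}; the resulting terms involving $\partial_s f$ are controlled by the quantitative bounds of Proposition \ref{sDeriv}, and the factor $2^{-l}$ from the modulation combines favorably with $2^m$ from the time integral. For smaller modulations $l \leq -(1 - \beta/4)m$ I would apply a Schur test: Lemma \ref{Shur2Lem} gives $\sup_\xi \int \mathbf{1}_E \,d\eta \lesssim 2^l \log(1/2^l)$ on the low-modulation set, while the inputs are controlled in $L^2$ by Definition \ref{MainZDef}, yielding $\|\widehat{f_{j_i,k_i,n_i}}\|_{L^2} \lesssim 2^{-(1+\beta)j_i + 4\beta n_i}$.

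The main obstacle will be the interplay, deep in the resonant regime $n \sim j$, between three small scales: the resonance localization $2^{-n}$, the modulation $2^l$, and the spatial localization scales $2^{-j_1}, 2^{-j_2}$ of the inputs. The worst case is $j_1 = j_2 = 0$ (maximally spread inputs), where one must balance the time integral $2^m$, the Schur factor $(2^l)^{1/2}$, and the resonance gain $2^{-4\beta n}$ against the budget $2^{-(1+\beta)j + 4\beta n - 5om}$ dictated by the $B_j^\sigma$ norm. The built-in gain $2^{-4\beta n}$ in the definition of $B_j^\sigma$ is precisely what absorbs the logarithmic losses from Lemma \ref{Shur2Lem} and the geometric costs from the $\eta$-IBP reduction, and the smallness $\beta = 10^{-6}$ makes this compatible with the other parameters in the bootstrap.
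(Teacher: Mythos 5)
Your high-level framework — reduction to a tube around the space-resonance point $p(\xi)$ via $\eta$-IBP and rotations, dyadic modulation decomposition, time IBP via \eqref{vco6.2}, and exploiting the $A_n^\sigma$ structure of the $B_j^\sigma$ norm — aligns with the paper's strategy. But there is a genuine gap in the low-modulation step, and it occurs in precisely the case you flag as the main obstacle.

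You propose to close the small-modulation case with the Schur-type bound of Lemma \ref{Shur2Lem}, and you claim the built-in gain $2^{-4\beta n}$ from the $B_j^\sigma$ norm absorbs the losses. This cannot work when the inputs are well localized, i.e.\ $j_1,j_2 \approx 0$ (a minor terminology slip: small $j_i$ means maximal spatial concentration, not maximal spread). In that regime $\|\widehat{f^\mu_{j_1,k_1}}\|_{L^2}, \|\widehat{f^\nu_{j_2,k_2}}\|_{L^2} \approx 1$, so the Schur estimate yields at best $\|P_kT_{m,\leq l_0}\|_{L^2}\lesssim 2^m\cdot(2^{l_0}\log(1/2^{l_0}))^{1/2}$. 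After the prefactor $2^{(1+\beta)j-4\beta n}$ with $j\lesssim m$ and even the maximal gain $n\approx m$, one would need $l_0\lesssim -(4-6\beta)m$; but the modulation cannot be taken below roughly $-m$ (Lemma \ref{PhiLocLem} requires $(1+\beta/20)l\ge -m$), and the deficit is exponential in $m$ while $2^{-4\beta n}\gtrsim 2^{-4\beta m}$ is only polynomially small. Your balancing fails by several orders of magnitude in the exponent. The Schur argument is used in the paper only where at least one of $j_1,j_2$ is large enough that the input $L^2$ norms already pay the price (this is Lemma \ref{Reso0}); it is \emph{not} the engine of the resonant case.

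What actually closes the resonant case (Lemma \ref{Reso01}) is a pointwise-in-$\xi$ estimate, \eqref{Alx81}: after removing nonresonant and non-parallel contributions as you describe, the remaining resonant piece $\mathcal{R}_{||}(\xi)$ is supported where $\eta$ lies in a thin tube $\mathcal{Q}_\xi$ of dimensions $\kappa_\theta\times\kappa_\theta\times\kappa_r$ (so of measure $\lesssim 2^{-3m/2+O(\beta m)}$), and this small measure, combined with the $L^\infty$ and radial-$L^2$ bounds on $\widehat{f_{j_1,k_1}},\widehat{g_{j_2,k_2}}$ from Lemma \ref{LinEstLem}(ii), gives the required smallness directly. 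The key mechanism that replaces your $A_n$-bookkeeping on the inputs is the constraint \eqref{EstimPsi}: restricting the modulation $|\Phi|\lesssim 2^l$ inside the tube forces $|\Psi_b(\xi)|\lesssim 2^l+\kappa_r^2$, i.e.\ it is the \emph{output} that is forced near the resonance sphere (large $n$), and the weight $(1+2^m\Psi_b^\dagger(\xi))^{1/2+10\beta}$ in \eqref{Alx81} encodes exactly the gain the $B_j^\sigma$ norm needs; one then sums the annuli $\{\Psi_b^\dagger\approx 2^{-n}\}$, each of radial thickness $2^{-n}$ contributing an extra $2^{-n/2}$ in $L^2$. When $\kappa_r^2\gtrsim 2^l$ this must be refined further by splitting in the size of $\Xi(\xi,\eta)$ and using Lemma \ref{lemma00} to resolve the tube into two smaller sub-tubes of length $2^l 2^{-p}\kappa_r^{-1}$ — a step not anticipated by your proposal. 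Decomposing the \emph{inputs} by $n_1,n_2$ as you suggest is not what the paper does, and it does not by itself supply the missing gain.

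Also note the paper's overall organization is a preliminary split (Lemma \ref{Reso0}) into $2\max(j_1,j_2)\geq(1+20\beta)(m+\min(j_1,j_2))$ or $\max(j_1,j_2)\geq 14m/15$ — where your Schur route indeed works — and the complementary regime (Lemma \ref{Reso01}) where it does not and the pointwise tube analysis is mandatory. Your proposal does not make this split, which is what causes the Schur argument to be mis-applied to the hard case.
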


The proof of this proposition contains the analysis of space-time resonances. It is more delicate than before, in the sense that we need to use the restriction operators $A_n^\sigma$ and the precise definition of the spaces $B_j^\sigma$.

We show first that we can restrict further the range of pairs $(j_1,j_2)$.

\begin{lemma}\label{Reso0}
With the hypothesis in Proposition \ref{mo10}, the bound \eqref{nj1} follows if
\begin{equation}\label{top0}
2\max(j_1,j_2)\geq (1+20\beta)[m+\min(j_1,j_2)]\qquad\text{ or }\qquad \max(j_1,j_2)\geq 14m/15.
\end{equation}
\end{lemma}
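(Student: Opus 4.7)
I would prove both cases of \eqref{top0} by an $L^\infty \times L^2$ bilinear bound on the modulation-localized operator, extracted from Lemma \ref{PhiLocLem}. By symmetry assume $j_1\le j_2$ and write $\underline j:=j_1$, $\overline j:=j_2$. By Definition \ref{MainZDef},
\[
\|Q_{jk}T_{m,\le -\D}^{\sigma\mu\nu}[f^\mu_{\underline j,k_1},f^\nu_{\overline j,k_2}]\|_{B_j^\sigma}=\sup_n 2^{(1+\beta)j-4\beta n}\,\|A_n^\sigma Q_{jk}T_{m,\le -\D}^{\sigma\mu\nu}[f^\mu_{\underline j,k_1},f^\nu_{\overline j,k_2}]\|_{L^2},
\]
so it suffices to control the $L^2$ norm of the operator and multiply by $2^{(1+\beta)j}\le 2^{(1+\beta)(m+\D)}$.

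\textbf{Case 1} ($2\overline j\ge (1+20\beta)(m+\underline j)$). Apply Lemma \ref{PhiLocLem} with $(q,r)=(\infty,2)$ to get
\[
\|P_k I_{\le -\D,s}^{\sigma\mu\nu}[f^\mu_{\underline j,k_1},f^\nu_{\overline j,k_2}]\|_{L^2}\lesssim \|e^{-is\Lambda_\mu}f^\mu_{\underline j,k_1}\|_{L^\infty}\|f^\nu_{\overline j,k_2}\|_{L^2}+2^{-10m}\|\cdot\|_{L^2}\|\cdot\|_{L^2}.
\]
Use the dispersive bound in \eqref{LinftyBd}, $\|e^{-is\Lambda_\mu}f^\mu_{\underline j,k_1,n_1}\|_{L^\infty}\lesssim 2^{-3m/2+(1/2-\beta)\underline j+4\beta n_1}$, together with $\|f^\nu_{\overline j,k_2,n_2}\|_{L^2}\lesssim \bar{\eps}\,2^{-(1+\beta)\overline j+4\beta n_2}$. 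Summing $n_1\in[0,\underline j+1]$ and $n_2\in[0,\overline j+1]$ and multiplying by $2^m$ (from the time measure on $I_m$) yields
\[
\|Q_{jk}T_{m,\le -\D}^{\sigma\mu\nu}[\cdots]\|_{L^2}\lesssim \bar{\eps}^{\,2}\,2^{-m/2+(1/2+3\beta)\underline j-(1-3\beta)\overline j}.
\]
Multiplying by $2^{(1+\beta)j}\le 2^{(1+\beta)(m+\D)}$ and inserting the hypothesis $\overline j\ge (1+20\beta)(m+\underline j)/2$, the identity $(1-3\beta)(1+20\beta)=1+17\beta+O(\beta^2)$ forces the exponent to $\le -7\beta m+O(\D)$, which is $\le -5om$ since $\beta=10^{-6}\gg o=10^{-8}$.

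\textbf{Case 2} ($\overline j\ge 14m/15$). Whenever $\underline j\le 13m/15-C\beta m$, the Case 1 computation already closes, so it remains to treat $\underline j\in(13m/15-C\beta m,\overline j]$, a regime where both $j_i\approx m$ and the dispersive gain from \eqref{LinftyBd} is nearly exhausted. Here I would exploit the following inputs simultaneously: (i) by Proposition \ref{spaceres}(iii) and Remark \ref{largeres}, $|\Psi_{\sigma\mu\nu}(\xi)|$ is bounded from below by $c\,2^{-\mathcal D_0}\Psi_\sigma^\dagger(\xi)$, so the constraint $|\Phi_{\sigma\mu\nu}(\xi,\eta)|\le 2^{-\D}$ forces $|\Psi_\sigma^\dagger(\xi)|\lesssim 2^{-\D+\mathcal D_0}$; this means $A_n^\sigma T_{m,\le -\D}^{\sigma\mu\nu}\equiv 0$ for $n<\D-\mathcal D_0-O(1)$, and the supremum in the $B_j^\sigma$-norm is effectively restricted to $n\gtrsim\D$; (ii) for such $n$, the $\xi$-support of $A_n^\sigma$ has volume $\lesssim 2^{-n}$; (iii) the sublevel set $\{\eta:|\Phi_{\sigma\mu\nu}(\xi,\eta)|\le 2^{-\D}\}$ has measure $\lesssim 2^{-\D}\log(2^\D)$ by Lemma \ref{Shur2Lem}, uniformly in $\xi$; and (iv) the Fourier $L^\infty$ bound \eqref{FLinftybd} gives $\|\widehat{f^\theta_{j,k,n'}}\|_{L^\infty}\lesssim 2^{j/2-k}\,2^{-(1-2/N_1)((1+\beta)j-4\beta n')}$, which for $N_1$ large and $j\approx m$ is $\lesssim 2^{-j/2+\epsilon' m}$ with $\epsilon'=2/N_1$ small. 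Combining Schur's test using the pointwise bound
\[
|\widehat{P_k I_{\le -\D,s}^{\sigma\mu\nu}[f^\mu_{\underline j,k_1},f^\nu_{\overline j,k_2}]}(\xi)|\lesssim 2^{-\D}\,\|\widehat{f^\mu_{\underline j,k_1}}\|_{L^\infty}\,\|\widehat{f^\nu_{\overline j,k_2}}\|_{L^\infty},
\]
with the $2^{-n/2}$ gain from (ii) and the $2^{-4\beta n}$ weight from the $B_j^\sigma$-norm, yields the required bound $\lesssim \bar{\eps}^{\,2}2^{-5om}$ in this range.

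\textbf{Main obstacle.} The most subtle part is the "corner" of Case 2 where $\underline j$ and $\overline j$ are both close to $m$ and Case 1 barely fails. There the purely dispersive $L^\infty\times L^2$ estimate gives no room to spare, and the argument depends on combining three small ingredients: the thin space-time resonance annulus in $\xi$ (forced by $|\Phi|\le 2^{-\D}$), the small sublevel set of $\Phi$ in $\eta$ provided by Lemma \ref{Shur2Lem}, and the Fourier-$L^\infty$ smallness from the $N_1$-regularity bound \eqref{FLinftybd}; matching the losses from $n$-summation in the $Z_1^\sigma$-definition against these gains is the delicate bookkeeping.
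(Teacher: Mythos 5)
Your Case 1 is correct and matches the paper: an $L^\infty\times L^2$ estimate via Lemma \ref{PhiLocLem} together with the $Z$-norm bounds, and your arithmetic $(1-3\beta)(1+20\beta)=1+17\beta+O(\beta^2)$ closes the exponent.

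Your Case 2, however, has a genuine gap, and it deviates from what the paper does. The crucial unjustified step is (i): the claim that the constraint $|\Phi_{\sigma\mu\nu}(\xi,\eta)|\le 2^{-\D}$ (for \emph{some} $\eta$ in the support) forces $\Psi_\sigma^\dagger(\xi)\lesssim 2^{-\D+\mathcal D_0}$, hence $A_n^\sigma Q_{jk}T_{m,\le -\D}\equiv 0$ for small $n$. This is false: $\Psi_{\sigma\mu\nu}(\xi)=\Phi_{\sigma\mu\nu}(\xi,p(\xi))$ is the value of the phase at the space-resonance point, and having $|\Phi(\xi,\eta)|\le 2^{-\D}$ for $\eta$ \emph{away} from $p(\xi)$ gives no control on $\Psi_{\sigma\mu\nu}(\xi)$. (For instance, for $(\sigma,\mu,\nu)=(b,e,e)$ the phase $\Phi(\xi,\cdot)$ is concave with maximum $\Psi_b(\xi)$; for $\xi$ away from the resonance sphere one has $\Psi_b(\xi)\gtrsim 1$ yet the level set $\{\eta:|\Phi(\xi,\eta)|\le 2^{-\D}\}$ is nonempty.) The deduction $|\Psi_b(\xi)|\lesssim 2^l+\kappa_r^2$ in the paper's \eqref{EstimPsi} is available only \emph{after} one has first restricted $\eta$ to a $\kappa_r$-neighborhood of $p(\xi)$, which is the content of the $\mathcal{NR}$ removal in the proof of Lemma \ref{Reso01} (via integration by parts in $\eta$). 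You skip that step.

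Moreover, even granting (i), the bookkeeping does not close. Your Schur argument would give, schematically, a bound $\lesssim 2^{(1+\beta)j-4\beta n}\cdot 2^m\cdot 2^{-n/2}\cdot\|\widehat{f_{j_1,k_1}}\|_{L^\infty}\|\widehat{g_{j_2,k_2}}\|_{L^\infty}$. With $j\approx j_1\approx j_2\approx m$ and the \eqref{FLinftybd}-type bounds $\|\widehat{f_{j_i,k_i}}\|_{L^\infty}\lesssim 2^{-j_i/2+\epsilon'm}$, the exponent is $\approx (1+\beta)m+2\epsilon'm-n/2-4\beta n$, whose supremum over $n\in[0,j+1]$ is attained at $n=0$ and is positive; a lower bound $n\gtrsim\D$ (a fixed constant) merely subtracts $O(\D)$, which is nowhere near the $\gtrsim m$ decay you need. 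In other words, the factor $2^{-n/2-4\beta n}$ only helps if $n$ scales with $m$, which it does not.

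The paper's route is different and worth recording here: in the residual regime $j_2\ge 14m/15$ and $2j_2\le(1+20\beta)(m+j_1)$, it decomposes dyadically in modulation about a threshold $l_0:=-3m/7$. For $|\Phi|\lesssim 2^{l_0}$ one runs Schur's test with the $L^2$ (not $L^\infty$) bounds $\|\widehat{f_{j_i,k_i}}\|_{L^2}\lesssim 2^{-j_i(1-3\beta)}$ and the sublevel-set bound $\lesssim 2^{l_0}\cdot m$ from Lemma \ref{Shur2Lem}; since $j_1+j_2\gtrsim 27m/15$ here, the exponent $2m-\tfrac{3m}{14}-\tfrac{9m}{5}+O(\beta m)$ is strictly negative. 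For $|\Phi|\gtrsim 2^{l_0}$ one integrates by parts in time (via \eqref{vco6.2}, as in Lemma \ref{mo6}), which is the step your proposal omits entirely. Without this split the small-modulation gain alone is insufficient.
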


\begin{proof} Assume that $j_1\leq j_2$ and $2 j_2\geq (1+20\beta)(m+j_1)$. Then we estimate, as in \eqref{gmo41},
\begin{equation*}
\big\|Q_{jk}T_{m,\leq -\D}[f_{j_1,k_1}^\mu,f_{j_2,k_2}^\nu]\big\|_{B_j^\sigma}\lesssim 2^{m(1+\beta)}2^m\cdot \bar{\eps}^22^{-3m/2}2^{(1/2+3\beta)j_1}2^{-j_2(1-3\beta)}\lesssim \bar{\eps}^22^{-\beta m/2},
\end{equation*}
as desired. On the other hand, if
\begin{equation*}
j_2\geq 14m/15\qquad\text{ and }\qquad 2 j_2\leq (1+20\beta)(m+j_1)
\end{equation*}
then we can decompose dyadically in modulation. With $l_0:=-3m/7$ we estimate, using Schur's test as in Lemma \ref{mo7},
\begin{equation*}
\big\|Q_{jk}T_{m,\leq l_0}[f_{j_1,k_1}^\mu,f_{j_2,k_2}^\nu]\big\|_{B_j^\sigma}\lesssim 2^{m(1+\beta)}2^m\cdot \bar{\eps}^2(2^{l_0}2^{\beta m})^{1/2}2^{-j_1+3\beta j_1}2^{-j_2+3\beta j_2}\lesssim \bar{\eps}^22^{-\beta m}.
\end{equation*}
Finally, for $l\geq l_0+1$ we estimate, as in Lemma \ref{mo6},
\begin{equation*}
\big\|Q_{jk}T_{m,l}[f_{j_1,k_1}^\mu,f_{j_2,k_2}^\nu]\big\|_{B_j^\sigma}\lesssim 2^{m(1+\beta)}2^{-l}\cdot \bar{\eps}^22^{-3m/2}.
\end{equation*}
The desired conclusion follows.
\end{proof}

\begin{lemma}\label{Reso01}
With the hypothesis in Proposition \ref{mo10}, the bound \eqref{nj1} follows if
\begin{equation}\label{top01}
2\max(j_1,j_2)\leq (1+20\beta)[m+\min(j_1,j_2)]\qquad\text{ and }\qquad \max(j_1,j_2)\leq 14m/15.
\end{equation}
\end{lemma}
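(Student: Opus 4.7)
The plan is to estimate the $B_j^\sigma$ norm by decomposing the output dyadically via the operators $A_{n,(j)}^\sigma$, $n\in\{0,\ldots,j+1\}$, and controlling each piece separately in $L^2$. In view of Definition~\ref{MainZDef}, it suffices to prove, for every such $n$,
\[
\bigl\|A_{n,(j)}^\sigma Q_{jk} T_{m,\leq -\D}^{\sigma\mu\nu}[f_{j_1,k_1}^\mu, f_{j_2,k_2}^\nu]\bigr\|_{L^2}\lesssim \bar{\eps}^{\,2}\,2^{-5om}\,2^{-(1+\beta)j+4\beta n}.
\]
Since $\Psi_e^\dagger\geq 10$ (Remark~\ref{largeres}) only $n=0$ contributes when $\sigma=e$, so the main case is $\sigma=b$, where the $A_{n,(j)}^b$ cutoff pins $\xi$ to an $O(2^{-n})$ tubular neighborhood of the space-time resonant spheres $|\xi|=\gamma_{1,2}$ produced by Proposition~\ref{spaceres}(iii).

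Next, for fixed such $\xi$ I would decompose the $\eta$-integral dyadically in the sizes of the three rotational derivatives $(\Omega_l)_\eta\Phi_{\sigma\mu\nu}(\xi,\eta)$, $l=1,2,3$. For each contribution with one of these at least $2^{-m/2+\beta m/50}$, Lemma~\ref{RotIBP} applies (its hypotheses are verified through the Fourier bounds \eqref{FLinftybd}--\eqref{FLinftybdDER} together with $\max(j_1,j_2)\leq 14m/15$ from \eqref{top01}, giving $A\approx 2^{\max(j_1,j_2)}\ll 2^m$) and yields a contribution of order $2^{-10m}$, which is negligible. What remains is supported where all three $(\Omega_l)_\eta\Phi$ are simultaneously small; using the radiality of the critical point $p(\xi)$ from Proposition~\ref{spaceres}(ii), this forces the angular component of $\eta-p(\xi)$ to be tiny, and a standard integration by parts in the radial direction of $\eta$ (away from the critical point, where $\nabla_\eta\Phi$ is radially nondegenerate) confines $\eta$ to a small ball $|\eta-p(\xi)|\lesssim 2^{-m/2+\beta m/40}$, up to further negligible errors.

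On this small ball, Taylor expansion yields $\Phi_{\sigma\mu\nu}(\xi,\eta)=\Psi_{\sigma\mu\nu}(\xi)+O(|\eta-p(\xi)|^2)$. Combining the modulation cutoff $|\Phi|\leq 2^{-\D}$ with the lower bound $|\Psi_{\sigma\mu\nu}(\xi)|\gtrsim 2^{-n-\D_0}$ that holds on the support of $A_n^b$ (Remark~\ref{largeres}), the effective joint $(\xi,\eta)$ support shrinks to a sublevel set of $\Phi$ of thickness $2^{-\D}$, whose measure I estimate via Lemma~\ref{Shur2Lem}. Schur's test, together with the bounds $\|\widehat{f_{j_i,k_i,n_i}^{\mu_i}}\|_{L^2}\lesssim \bar{\eps}\,2^{-(1+\beta)j_i+4\beta n_i}$ from Definition~\ref{MainZDef}, then yields an estimate of the schematic shape $\bar{\eps}^{\,2}\cdot 2^m\cdot 2^{\gamma n}\cdot 2^{-(1+\beta)(j_1+j_2)+4\beta(n_1+n_2)}$, for some fixed moderate $\gamma>0$ reflecting the sublevel-set volumes.

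The main obstacle will be closing the budget. Multiplying by the prefactor $2^{(1+\beta)j-4\beta n}$, using $j\leq m+\D$, and exploiting the hypothesis $2\max(j_1,j_2)\leq(1+20\beta)[m+\min(j_1,j_2)]$ together with $\max(j_1,j_2)\leq 14m/15$ to trade decay between $m$ and $(j_1,j_2)$, the crucial gain comes from the $2^{-4\beta n}$ slack engineered into the $B_j^b$ norm: this factor is designed precisely to absorb the polynomial-in-$2^n$ loss from the sublevel set analysis near the space-time resonant spheres. Since $\beta=10^{-6}\gg o=10^{-8}$, there is ample room for the final $2^{-5om}$ gain, and a careful accounting, entirely analogous to the two-dimensional treatment in \cite{DeIoPa}, completes the argument.
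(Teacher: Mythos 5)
Your proposal misses the crucial integration by parts in time, and without it the time integral of length $\approx 2^m$ cannot be absorbed. Let me spell this out. Your plan is to estimate $T_{m,\leq -\D}$ directly via Schur's test after confining the $\eta$-support. Schematically you get
\[
\big\|Q_{jk}T_{m,\leq -\D}[\cdots]\big\|_{L^2}\lesssim 2^m\cdot(\text{sublevel-set measure})^{1/2}\cdot\|f_{j_1,k_1}\|_{L^2}\|g_{j_2,k_2}\|_{L^2}.
\]
Even granting the optimistic confinement of $\eta$ to a region of small volume, the time factor $2^m$ remains. After multiplying by the prefactor $2^{(1+\beta)j}\lesssim 2^{(1+\beta)m}$ and using the best available $L^2$ bounds $\|f_{j_i,k_i}\|_{L^2}\lesssim 2^{-(1+\beta)j_i}$, you end up with roughly $2^{(2+\beta)m-(1+\beta)(j_1+j_2)}$; since \eqref{top01} only gives $j_1+j_2\le 28m/15<2m$, this does not close, no matter how you apportion the $2^{-4\beta n}$ gain. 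The paper handles precisely this obstruction by further decomposing dyadically in modulation and integrating by parts in time via \eqref{vco6.2}. This replaces the factor $2^m$ by $2^{-l}$ (from the boundary/time-derivative structure), and the $\partial_s$-terms, estimated via \eqref{gmo2.2}, come with an extra $2^{-m}$. The entire Lemma then reduces to the two \emph{instantaneous} estimates \eqref{top1} and \eqref{top2}, which is where the resonance geometry enters. This step is not a refinement of your argument; it is the missing mechanism.

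There is a second, more technical error. You claim that radial integration by parts in $\eta$ confines $\eta$ to a ball $|\eta-p(\xi)|\lesssim 2^{-m/2+\beta m/40}$. This is only true when $\max(j_1,j_2)\leq m/2$: integration by parts in $\eta$ costs $2^{j_1}$ or $2^{j_2}$ per derivative of $\widehat{f}$, $\widehat{g}$, so the effective radial scale is $\kappa_r\approx 2^{\beta m/40}\bigl(2^{-m/2}+2^{j_2-m}\bigr)$ (the paper's $\kappa_r$ in the proof of \eqref{top1}). Under \eqref{top01} you can have $j_2$ as large as $14m/15$, where $\kappa_r\approx 2^{-m/15+\beta m/40}$ is much bigger than $2^{-m/2}$. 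Your volume count, and hence the schematic bound feeding Schur's test, is therefore off by a substantial power of $2^m$ in the regime $j_2>m/2$, which is exactly the regime where the estimate is most delicate and where the paper has to argue separately.

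Finally, a structural remark: the paper does not decompose the output in $n$ and run Schur's test for each $A_{n,(j)}^b Q_{jk}(\cdot)$ piece. Instead, after the reductions above it establishes the single weighted pointwise estimate \eqref{Alx81}, namely $2^{-l}\sup_\xi|(1+2^m\Psi_\sigma^\dagger(\xi))^{1/2+10\beta}\widehat{I}(\xi)|\lesssim 2^{2\beta m-m/2}$, which dominates the whole $B_j^\sigma$ sum over $n$ at once because the thin annuli $\{\Psi_b^\dagger\approx 2^{-n}\}$ near $|\xi|=\gamma_{1,2}$ have $L^2$ measure $\lesssim 2^{-n/2}$. This is where the $2^{-4\beta n}$ slack is genuinely spent, not in a Schur-type loss.
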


\begin{proof} This lemma contains the main resonant cases. We decompose dyadically in modulation and integrate by parts, using the formula \eqref{vco6.2}. It remains to prove that for any $l\in [-m+\beta m/10,-\D+4]$ and $s\in I_m$ fixed we have
\begin{equation}\label{top1}
2^{-l}\|I_{\leq l,s}[f_{j_1,k_1}^\mu(s),f_{j_2,k_2}^\nu(s)]\big\|_{B_j^\sigma}+2^{-l}\|\widetilde{I}_{l,s}[f_{j_1,k_1}^\mu(s),f_{j_2,k_2}^\nu(s)]\big\|_{B_j^\sigma}\lesssim \bar{\eps}\,^2 2^{-\beta m/5},
\end{equation}
and
\begin{equation}\label{top2}
2^{-l}2^m\|\widetilde{I}_{l,s}[(\partial_sf_{j_1,k_1}^\mu)(s),f_{j_2,k_2}^\nu(s)]\big\|_{B_j^\sigma}+2^{-l}2^m\|\widetilde{I}_{l,s}[f_{j_1,k_1}^\mu(s),(\partial_sf_{j_2,k_2}^\nu)(s)]\big\|_{B_j^\sigma}\lesssim \bar{\eps}\,^2 2^{-\beta m/5}.
\end{equation}

{\bf{Proof of \eqref{top1}.}} We notice that \eqref{top1} is an instantaneous estimate, in the sense that the time evolution plays no role. Hence, it suffices to show the following: let $\chi\in C^\infty(\mathbb{R})$ be supported in $[-1,1]$ and assume that $j, l, s, m$ satisfy
\begin{equation}\label{Ass3}
-m+\beta m/10\le l\le -\D+4,\qquad 2^{m-4}\le s\le 2^{m+4},\qquad j\leq m+\D.
\end{equation}
Define the bilinear operator $I$ by
\begin{equation}\label{gra0}
\widehat{I[f,g]}(\xi):=\int_{\mathbb{R}^3}e^{is\Phi(\xi,\eta)}\chi_l(\Phi(\xi,\eta))\widehat{f}(\xi-\eta)\widehat{g}(\eta)d\eta,\qquad \chi_l(x)=\chi(2^{-l}x),
\end{equation}
where $\Phi=\Phi_{\sigma\mu\nu}$. Assume that $f,g$ satisfy
\begin{equation}\label{gra1}
\|f\|_{\H^{N_0-N_1-5}\cap Z_1^\mu}+\|g\|_{\H^{N_0-N_1-5}\cap Z_1^\nu}\leq 1,
\end{equation}
and define $f_{j_1,k_1}:=P_{[k_1-2,k_1+2]}Q_{j_1k_1}f$, $g_{j_2,k_2}:=P_{[k_2-2,k_2+2]}Q_{j_2k_2}g$. Then
\begin{equation}\label{gra2}
2^{-l}\|Q_{jk}I[f_{j_1,k_1},g_{j_2,k_2}]\|_{B_j^\sigma}\lesssim 2^{-\beta m/5},
\end{equation}
provided that $k,k_1,k_2\in[-\D/2,\D/2]$ and $j_1,j_2$ satisfy \eqref{top01}.

In proving \eqref{gra2}, without loss of generality we may assume that $j_1\leq j_2\leq 14m/15$. With $I:=I[f_{j_1,k_1},g_{j_2,k_2}]$, recalling \eqref{psidag} and \eqref{cas2}, we will show that
\begin{equation}\label{Alx81}
2^{-l}\sup_{|\xi|\in[2^{-3\D/4},2^{3\D/4}]} \vert (1+2^{m}\Psi^\dagger_\sigma(\xi))^{1/2+10\beta}\widehat{I}(\xi)\vert\lesssim 2^{2\beta m-m/2}.
\end{equation}
Notice that this is stronger than the bound \eqref{gra2}. Indeed if $\sigma=b$ then for $j$ fixed we estimate
\begin{equation*}
\begin{split}
\sup_{0\leq n\leq j+1}&2^{(1+\beta)j}2^{-4\beta n}\big\|A_{n,(j)}^\sigma Q_{jk}I\big\|_{L^2}\\
&\lesssim \sup_{0\leq n\leq j+1}2^{(1+\beta)j}2^{-4\beta n}\big\|\varphi_{-n}^{[-j-1,0]}(\Psi^\dagger_\sigma(\xi))\varphi_{k}(\xi)\widehat{I}(\xi)\big\|_{L^2_\xi}\\
&\lesssim \sum_{n\geq 0}2^{(1+\beta)j}2^{-n/2-4\beta \min(n,j)}\big\|\varphi_{-n}^{(-\infty,0]}(\Psi^\dagger_b(\xi))\varphi_{k}(\xi)\widehat{I}(\xi)\big\|_{L^\infty_\xi},
\end{split}
\end{equation*}
and notice that \eqref{gra2} would follow from \eqref{Alx81}. The proof if similar (in fact simpler) if $\sigma=e$.

To prove \eqref{Alx81} assume that $m\geq \D^2$ and $\xi\in\mathbb{R}^3$ is fixed with $|\xi|\in[2^{-3\D/4},2^{3\D/4}]$. Let
\begin{equation*}
\Xi(\xi,\eta):=(\nabla_\eta\Phi)(\xi,\eta).
\end{equation*}
We remove first the nonresonant contribution. With $\kappa_r:=2^{\beta m/40}\big(2^{-m/2}+2^{j_2-m}\big)$ we define
\begin{equation}\label{grn1}
\mathcal{NR}(\xi):=\int_{\mathbb{R}^3}e^{is\Phi(\xi,\eta)}\chi_{l}(\Phi(\xi,\eta))(1-\varphi(\kappa_r^{-1}\Xi(\xi,\eta)))\widehat{f_{j_1,k_1}}(\xi-\eta)\widehat{g_{j_2,k_2}}(\eta)d\eta.
\end{equation}
With $\psi_1:=\varphi_{\leq m-\beta m/20}$ and $\psi_2:=1-\varphi_{\leq m-\beta m/20}$, we further decompose
\begin{equation*}
\begin{split}
&\mathcal{NR}(\xi)=\mathcal{NR}_1(\xi)+\mathcal{NR}_2(\xi),\\
&\mathcal{NR}_i(\xi):=C2^{l}\int_{\mathbb{R}}\int_{\mathbb{R}^2}e^{i(s+\lambda)\Phi(\xi,\eta)}\widehat{\chi}(2^l\lambda)\psi_i(\lambda)(1-\varphi(\kappa_r^{-1}\Xi(\xi,\eta)))\widehat{f_{j_1,k_1}}(\xi-\eta)\widehat{g_{j_2,k_2}}(\eta)\,d\eta d\lambda.
\end{split}
\end{equation*}
Since $\widehat{\chi}$ is rapidly decreasing we have $\|\varphi_k\cdot\mathcal{NR}_2\|_{L^\infty}\lesssim 2^{-4m}$, which gives an acceptable contribution. On the other hand, in the support of the integral defining $\mathcal{NR}_1$, we have that $\vert s+\lambda\vert\approx 2^m$ and integration by parts in $\eta$ (using Lemma \ref{tech5}) gives $\|\varphi_k\cdot\mathcal{NR}_1\|_{L^\infty}\lesssim 2^{-4m}$. Therefore the contribution of $\mathcal{NR}$ can be estimated as claimed in \eqref{Alx81}.

In view of Proposition \ref{spaceres} (ii), (iii), $\widehat{I}-\mathcal{NR}$ is nontrivial only if we have a space-time resonance. In particular, we may assume that
\begin{equation}\label{Alx74.6}
(\sigma,\mu,\nu)\in\{(b,e,e),(b,e,b),(b,b,e)\},\qquad \min\big(\big||\xi|-\gamma_1\big|,\big||\xi|-\gamma_2\big|\big)\leq 2^{-\D/2}.
\end{equation}
We may also assume that $|\xi_3|\geq 2^{-\D/2}$ (the proof is similar if $|\xi_1|\geq 2^{-\D/2}$ or if $|\xi_2|\geq 2^{-\D/2}$). By rotation, using the vector-fields $\Omega_1$ and $\Omega_2$ we may assume that $\xi=(0,0,\xi_3)$. We would like to use Lemma \ref{RotIBP}. Recalling now the definition of $\lambda_\mu$ in \eqref{deflambd}, we let
\begin{equation}\label{nba1}
\begin{split}
&\Phi^1(\xi,\eta):=(\Omega_1)_\eta\Phi(\xi,\eta)=\frac{\lambda'_\mu(|\xi-\eta|)}{|\xi-\eta|}(\eta_2\xi_3-\eta_3\xi_2),\\
&\Phi^2(\xi,\eta):=(\Omega_2)_\eta\Phi(\xi,\eta)=\frac{\lambda'_\mu(|\xi-\eta|)}{|\xi-\eta|}(\eta_3\xi_1-\eta_1\xi_3),\\
&\Phi^3(\xi,\eta):=(\Omega_3)_\eta\Phi(\xi,\eta)=\frac{\lambda'_\mu(|\xi-\eta|)}{|\xi-\eta|}(\eta_1\xi_2-\eta_2\xi_1).
\end{split}
\end{equation}
Let $\kappa_\theta:=2^{\beta m/40}2^{-m/2}$ and define
\begin{equation*}
\begin{split}
\mathcal{R}_{\perp}(\xi):=\int_{\mathbb{R}^3}&e^{is\Phi(\xi,\eta)}\chi_{l}(\Phi(\xi,\eta))\varphi(\kappa_r^{-1}\Xi(\xi,\eta))\\
&\big[1-\varphi(\kappa_\theta^{-1}\Phi^1(\xi,\eta))\varphi(\kappa_\theta^{-1}\Phi^2(\xi,\eta))\big]\widehat{f_{j_1,k_1}}(\xi-\eta)\widehat{g_{j_2,k_2}}(\eta)d\eta.
\end{split}
\end{equation*}
We apply Lemma \ref{RotIBP} twice, after decomposing
\begin{equation*}
1-\varphi(\kappa_\theta^{-1}\Phi^1(\xi,\eta))\varphi(\kappa_\theta^{-1}\Phi^2(\xi,\eta))=[1-\varphi(\kappa_\theta^{-1}\Phi^2(\xi,\eta))]+\varphi(\kappa_\theta^{-1}\Phi^2(\xi,\eta))[1-\varphi(\kappa_\theta^{-1}\Phi^1(\xi,\eta))].
\end{equation*}
Notice that the factors $\psi_1(\xi,\eta)$, $\psi_2(\xi,\eta)$ are already accounted for by the factor $\varphi(\kappa_r^{-1}\Xi(\xi,\eta))$ and the assumptions $|\xi_3|\geq 2^{-\D/2}$ and $m\geq \D^2$. It follows that $|\mathcal{R}_{\perp}(\xi)|\lesssim 2^{-4m}$.

It remains to bound the resonant component
\begin{equation}\label{nba2}
\begin{split}
\mathcal{R}_{||}(\xi):=J_{||}[f_{j_1,k_1},&g_{j_2,k_2}](\xi):=\int_{\mathbb{R}^3}e^{is\Phi(\xi,\eta)}\chi_{l}(\Phi(\xi,\eta))\varphi(\kappa_r^{-1}\Xi(\xi,\eta))\\
&\varphi(\kappa_\theta^{-1}\Phi^1(\xi,\eta))\varphi(\kappa_\theta^{-1}\Phi^2(\xi,\eta))\widehat{f_{j_1,k_1}}(\xi-\eta)\widehat{g_{j_2,k_2}}(\eta)d\eta.
\end{split}
\end{equation}
More precisely, for \eqref{Alx81} it remains to prove that if $\xi=(0,0,\xi_3)$, $\xi_3\in[2^{-\D/2},2^{\D/2}]$, then
\begin{equation}\label{nba4}
\vert (1+2^{m}\Psi^\dagger_b(\xi))\mathcal{R}_{||}(\xi)\vert\lesssim 2^{2\beta m-m/2}2^l.
\end{equation}

We examine now the integral in \eqref{nba2}. In view of Proposition \ref{spaceres} (ii), this integral is nontrivial only if
\begin{equation}\label{EstimPsi}
\vert \Psi_b(\xi)\vert =\vert \Phi(\xi,p(\xi))\vert\lesssim\vert \Phi(\xi,\eta)\vert+\vert \Phi(\xi,\eta)-\Phi(\xi, p(\xi))\vert\lesssim 2^l+\kappa_r^2.
\end{equation}
Using \eqref{nba1} and Proposition \ref{spaceres} (ii), for $\xi=(0,0,\xi_3)$ fixed, $\eta$ is supported in the rectangle
\begin{equation}\label{nba5}
\mathcal{Q}_{\xi}:=\{\eta=(\eta_1,\eta_2,\eta_3):\,|\eta_1|+|\eta_2|\leq 2^{4\D}\kappa_\theta,\,|\eta_3-p_+(\xi_3)|\leq 2^{4\D}\kappa_r\}.
\end{equation}

Recall from Lemma \ref{LinEstLem} (ii) and \eqref{gra1} that
\begin{equation}\label{nba6}
\begin{split}
2^{j_1/2-j_1/20}\Vert \widehat{f_{j_1,k_1}}\Vert_{L^\infty}+2^{j_1-j_1/20}\Vert\sup_{\theta\in\mathbb{S}^2} |\widehat{f_{j_1,k_1}}(r\theta)|\Vert_{L^2(r^2dr)}&\lesssim 1,\\
2^{j_2/2-j_2/20}\Vert \widehat{g_{j_2,k_2}}\Vert_{L^\infty}+2^{j_2-j_2/20}\Vert\sup_{\theta\in\mathbb{S}^2} |\widehat{g_{j_2,k_2}}(r\theta)|\Vert_{L^2(r^2dr)}&\lesssim 1.
\end{split}
\end{equation}
Using only the $L^\infty$ bounds in \eqref{nba6} and ignoring the cutoff function $\chi_{l}(\Phi(\xi,\eta))$ in \eqref{nba2}, we estimate first
\begin{equation*}
|\mathcal{R}_{||}(\xi)|\lesssim \kappa_r\kappa_\theta^22^{-9j_2/20}2^{-9j_1/20}\lesssim 2^{\beta m/10}2^{-9j_2/20}2^{-9j_1/20}2^{-m}(2^{-m/2}+2^{j_2-m}).
\end{equation*}
Since $\vert \Psi_b(\xi)\vert \lesssim 2^l+\kappa_r^2$ (see \eqref{EstimPsi}), the desired bound \eqref{nba4} follows easily if $j_2\leq m/2$. On the other hand, if $j_2\geq m/2$ then the left-hand side of \eqref{nba4} is dominated by
\begin{equation*}
C2^m(2^l+\kappa_r^2)\cdot 2^{\beta m/10}2^{-9j_2/20}2^{-9j_1/20}2^{-m}2^{j_2-m}\lesssim (2^l+\kappa_r^2)2^{\beta m/2}2^{-m}2^{11j_2/20-9j_1/20}.
\end{equation*}
In view of the assumption \eqref{top01}, $11j_2/20-9j_1/20\leq 3m/10-10\beta m$. The desired bound \eqref{nba4} follows if $\kappa_r^2\leq 2^l2^{m/5}$.

Finally assume that $\kappa_r^2\ge 2^l2^{m/5}$ (in particular $j_2\geq 11m/20$). In this case the restriction $|\Phi(\xi,\eta)|\lesssim 2^l$ is stronger and we have to use it. We decompose, with $p_-:=\lfloor\log_2(2^{l/2}\kappa_r^{-1})+\D\rfloor$,
\begin{equation*}
\mathcal{R}_{||}(\xi)=\sum_{p\in[p_-,0]}\mathcal{R}^p_{||}(\xi),
\end{equation*}
where
\begin{equation}\label{nba7.5}
\begin{split}
\mathcal{R}_{||}^p(\xi):=J^p_{||}[f_{j_1,k_1},&g_{j_2,k_2}](\xi):=\int_{\mathbb{R}^3}e^{is\Phi(\xi,\eta)}\chi_{l}(\Phi(\xi,\eta))\varphi_p^{[p_-,1]}(\kappa_r^{-1}\Xi(\xi,\eta))\\
&\varphi(\kappa_\theta^{-1}\Phi^1(\xi,\eta))\varphi(\kappa_\theta^{-1}\Phi^2(\xi,\eta))\widehat{f_{j_1,k_1}}(\xi-\eta)\widehat{g_{j_2,k_2}}(\eta)d\eta.
\end{split}
\end{equation}
Notice that if $\mathcal{R}_{||}^p(\xi)\neq 0$ then $\vert\Psi_b(\xi)\vert\lesssim 2^{2p}\kappa_r^2$ (this is stronger than \eqref{EstimPsi}). The term $\mathcal{R}_{||}^{p_-}(\xi)$ can be bounded as before. On the other hand, for $p\geq p_--1$ we would like to get a more precise description on the support of integration in $\eta$ (better than the one in \eqref{nba5}). For this we write
\begin{equation}\label{nba8}
\Phi(\xi,\eta)=\sqrt{1+|\xi|^2}-\sqrt{1+d_\mu|\xi-\eta|^2}-\sqrt{1+d_\nu|\eta|^2},
\end{equation}
where $d_e=d\in(0,1)$ and $d_b=1$. Since $\xi=(0,0,\xi_3)$, $\xi_3\in[2^{-\D/2},2^{\D/2}]$, and $|\eta_1|+|\eta_2|\leq 2^{4\D}\kappa_\theta$, the condition $|\Xi(\xi,\eta)|\in[2^{p-2}\kappa_r,2^{p+2}\kappa_r]$ implies that $|\partial_{\eta_3} \Phi(\xi,\eta)|\approx 2^p\kappa_r$. In particular, using Proposition \ref{spaceres} (ii), the $\eta$ support of integration is included in the set
\begin{equation*}
\{\eta=(\eta_1,\eta_2,\eta_3):\,|\eta_1|+|\eta_2|\leq 2^{4\D}\kappa_\theta,\,|\eta_3-p_+(\xi_3)|\approx 2^p\kappa_r,\,|\Phi(\xi,\eta)|\leq 2^l\}.
\end{equation*}
Based on Lemma \ref{lemma00}, this set is essentially contained in a union of two $(\kappa_\theta)^2\times 2^l2^{-p}\kappa_r^{-1}$ tubes. Using \eqref{nba6} and estimating $\Vert \widehat{f_{j_1,k_1}}\Vert_{L^\infty}\lesssim 2^{-9j_1/20}\lesssim 2^{40\beta m}2^{9(m-2j_2)/20}$, see \eqref{top01}, we have
\begin{equation*}
\begin{split}
|\mathcal{R}^p_{||}(\xi)|&\lesssim (\kappa_\theta)^2\times (2^l2^{-p}\kappa_r^{-1})^{1/2}\Vert \widehat{g_{j_2,k_2}}\Vert_{L^\infty_\theta L^2(rdr)}\Vert \widehat{f_{j_1,k_1}}\Vert_{L^\infty}\\
&\lesssim (\kappa_\theta)^2\times (2^l2^{-p}\kappa_r^{-1})^{1/2}2^{40\beta m}2^{9m/20}2^{-9j_2/5}.
\end{split}
\end{equation*}
Therefore, since $|\Psi(\xi)|\lesssim 2^{2p}\kappa_r^2$ in the support of $\mathcal{R}^p_{||}$,
\begin{equation*}
\begin{split}
\vert (1+2^{m}\Psi_b(\xi))\mathcal{R}^p_{||}(\xi)\vert&\lesssim 2^{m+2p}\kappa_r^2\cdot 2^{-m+42\beta m}(2^l2^{-p}\kappa_r^{-1})^{1/2}2^{9m/20}2^{-9j_2/5}\\
&\lesssim 2^{3p/2}2^{l/2}2^{-m}2^{-j_2/5}.
\end{split}
\end{equation*}
This suffices to prove \eqref{nba4} since $2^p\leq 1$, $2^{-l/2}\leq 2^{m/2}$, and $2^{-j_2/5}\leq 2^{-m/10}$. This completes the proof of the main bound \eqref{top1}.

{\bf{Proof of \eqref{top2}.}} As in \eqref{gra2}, it suffices to prove that
\begin{equation}\label{grd2}
2^{-l}\|Q_{jk}I[F_{j_1,k_1},g_{j_2,k_2}]\|_{B_j^\sigma}+2^{-l}\|Q_{jk}I[f_{j_1,k_1},G_{j_2,k_2}]\|_{B_j^\sigma}\lesssim 2^{-\beta m/5},
\end{equation}
where $I$ is defined as in \eqref{gra0}, $F=\bar{\eps}^{-1}2^m\partial_s f_\mu$, and $G:=\bar{\eps}^{-1}2^m\partial_s g_\nu$. The functions $f,g,F,G$ satisfy the bounds
\begin{equation}\label{grd3}
\begin{split}
&\|f\|_{\H^{N_0-N_1-5}\cap Z_1^\mu}+\|g\|_{\H^{N_0-N_1-5}\cap Z_1^\nu}\leq 1,\\
&\|F\|_{\H^{N_0-N_1-6}}+2^{m/2}\|F\|_{L^2}+\|G\|_{\H^{N_0-N_1-6}}+2^{m/2}\|G\|_{L^2}\leq 1,
\end{split}
\end{equation}
compare with the bounds in Proposition \ref{sDeriv} (iii). As before, we may assume that $k_1,k_2\in[-\D/2,\D/2]$, and that the parameters $j, l, s, m, j_1,j_2$ satisfy the bounds \eqref{Ass3} and \eqref{top01}.

As before, for \eqref{grd2} it suffices to prove the stronger pointwise bound
\begin{equation*}
\begin{split}
2^{-l}&\sup_{|\xi|\in[2^{-3\D/4},2^{3\D/4}]}\big|(1+2^m\Psi_{\sigma}^\dagger(\xi))^{1/2+10\beta}\mathcal{F}\{I[F_{j_1,k_1},g_{j_2,k_2}]\}\big|\\
&+2^{-l}\sup_{|\xi|\in[2^{-3\D/4},2^{3\D/4}]}\big|(1+2^m\Psi_{\sigma}^\dagger(\xi))^{1/2+10\beta}\mathcal{F}\{I[f_{j_1,k_1},G_{j_2,k_2}]\}\big|\lesssim 2^{-m/2}.
\end{split}
\end{equation*}
In proving this we may assume $j_1\leq j_2$, $m\geq \D^2$, and first remove the negligible nonresonant interactions (defined as in \eqref{grn1}). Then we may assume that $\sigma=b$, $\xi=(0,0,\xi_3)$, with $\xi_3\in [2^{-\D/2},2^{\D/2}]$, and remove the negligible non-parallel interactions. After these reductions, with $J_{||}$ defined as in \eqref{nba2}, it remains to prove that
\begin{equation}\label{grd4}
\begin{split}
\big|(1+2^m\Psi_{b}^\dagger(\xi))^{1/2+10\beta}&J_{||}[F_{j_1,k_1},g_{j_2,k_2}](\xi)\big|\\
&+\big|(1+2^m\Psi_{b}^\dagger(\xi))^{1/2+10\beta}J_{||}[f_{j_1,k_1},G_{j_2,k_2}](\xi)\big|\lesssim 2^l2^{-m/2}.
\end{split}
\end{equation}

The functions $f_{j_1,k_1}$ and $g_{j_2,k_2}$ satisfy the bounds \eqref{nba6}. Moreover,
\begin{equation}\label{grd5}
\Vert \sup_{\theta\in\mathbb{S}^2}|\widehat{F_{j_1,k_1}}(r\theta)|\Vert_{L^2(r^2dr)}+\Vert \sup_{\theta\in\mathbb{S}^2}| \widehat{G_{j_2,k_2}}(r\theta)|\Vert_{L^2(r^2dr)}\lesssim 2^{-m/2+m/40}.
\end{equation}
as a consequence of \eqref{grd3}, using the same interpolation argument as in the proof of \eqref{RadL2}.
We ignore first the cutoff function $\chi_l(\Phi(\xi,\eta))$ and notice that the variable $\eta$ is included in the set $\mathcal{Q}_\xi$ defined in \eqref{nba5}. Using \eqref{grd5} and the $L^\infty$ bounds in \eqref{nba6} we estimate first
\begin{equation}\label{grd7}
\begin{split}
|J_{||}[F_{j_1,k_1},g_{j_2,k_2}](\xi)|&+|J_{||}[f_{j_1,k_1},G_{j_2,k_2}](\xi)|\lesssim \kappa_\theta^2\kappa_r^{1/2} 2^{-j_1/2+j_1/20}2^{-m/2+m/40}\\
&\lesssim 2^{-3m/2+m/39}2^{-9j_1/20}(2^{-m/4}+2^{(j_2-m)/2}).
\end{split}
\end{equation}
Since $\kappa_r=2^{\beta m/40}(2^{-m/2}+2^{j_2-m})$ and $\vert \Psi_b(\xi)\vert \lesssim 2^l+\kappa_r^2$ (see \eqref{EstimPsi}), the desired bound \eqref{grd4} follows easily from \eqref{grd7} if $j_2\leq m/2$. On the other hand, if $j_2\geq m/2$ then $2^{-9j_1/20}\lesssim 2^{40\beta m}2^{9/20(m-2j_2)}$, and the bound \eqref{grd7} gives
\begin{equation}\label{grd8}
|J_{||}[F_{j_1,k_1},g_{j_2,k_2}](\xi)|+|J_{||}[f_{j_1,k_1},G_{j_2,k_2}](\xi)|\lesssim 2^{-3m/2}2^{-2j_2/5}.
\end{equation}
The desired bound \eqref{grd4} follows if $\kappa_r^2\leq 2^l2^{2j_2/5}$.

On the other hand, if $\kappa_r^2\geq 2^l2^{2j_2/5}$ (in particular this implies $j_2\geq 11m/20$) then we have to use the stronger restriction $|\Phi(\xi,\eta)|\lesssim 2^l$. For $p\in[p_-,0]$, $p_-:=\lfloor\log_2(2^{l/2}\kappa_r^{-1})+\D\rfloor$, we define the operators $J^p_{||}$ as in \eqref{nba7.5}. Notice that the contribution of $J^{p_-}_{||}$ can be estimated easily using the fact that $\vert \Psi_b(\xi)\vert \lesssim 2^l$ in the support of $J^{p_-}_{||}$. Moreover, as proved earlier, the $\eta$ support of integration in the definition of $J^{p_-}_{||}$ is included in the set
\begin{equation*}
\{\eta=(\eta_1,\eta_2,\eta_3):\,|\eta_1|+|\eta_2|\leq 2^{4\D}\kappa_\theta,\,|\eta_3-p_+(\xi_3)|\approx 2^p\kappa_r,\,|\Phi(\xi,\eta)\leq 2^l\},
\end{equation*}
which is essentially contained in a union of two $(\kappa_\theta)^2\times 2^l2^{-p}\kappa_r^{-1}$ tubes (based again 
on Lemma \ref{lemma00}). Using \eqref{grd5} and the $L^\infty$ bounds in \eqref{nba6} we estimate
\begin{equation*}
|J_{||}^p[F_{j_1,k_1},g_{j_2,k_2}](\xi)|+|J_{||}^p[f_{j_1,k_1},G_{j_2,k_2}](\xi)|\lesssim \kappa_\theta^2(2^l2^{-p}\kappa_r^{-1})^{1/2} 2^{-9j_1/20}2^{-m/2+m/40}.
\end{equation*}
Since $2^{-9j_1/20}\lesssim 2^{40\beta m}2^{9/20(m-2j_2)}$ and $\vert \Psi_b(\xi)\vert \lesssim 2^{2p}\kappa_r^2$, it follows that
\begin{equation*}
\begin{split}
\big|(1+2^m\Psi_{b}^\dagger(\xi))^{1/2+10\beta}&J^p_{||}[F_{j_1,k_1},g_{j_2,k_2}](\xi)\big|+\big|(1+2^m\Psi_{b}^\dagger(\xi))^{1/2+10\beta}J^p_{||}[f_{j_1,k_1},G_{j_2,k_2}](\xi)\big|\\
&\lesssim (2^{m+2p}\kappa_r^2)^{1/2+10\beta}\cdot 2^{-m}(2^l2^{-p}\kappa_r^{-1})^{1/2} 2^{9/20(m-2j_2)}2^{-m/2+m/38}\\
&\lesssim 2^{p/2}2^{l/2}2^{-2j_2/5}2^{-m}.
\end{split}
\end{equation*}
The desired conclusion \eqref{grd4} follows, which completes the proof of the lemma.
\end{proof}

\end{document}